\newtheorem{proposition}{Proposition}
\newenvironment{proof}[1][Proof]{\textbf{#1.} }{\ \rule{0.5em}{0.5em}}
\renewcommand{\newtheorem}{\arabic{prediction}:}
\begin{document}

\title{\textbf{Feed-in Tariff Contract Schemes and Regulatory Uncertainty}}
%\date{\today}
\date{}

\author[a]{Luciana Barbosa}
\author[b]{Cl\'audia Nunes\footnote{Corresponding author:Cl\'audia Nunes, cnunes@math.tecnico.ulisboa.pt }}
\author[c]{Artur Rodrigues}
\author[d]{Alberto Sardinha}

\affil[ ]{\footnotesize}
\affil[a]{Universidade Aut\'{o}noma de Lisboa}
\affil[b]{CEMAT and Instituto Superior T\'{e}cnico, Universidade de Lisboa}
\affil[c]{NIPE and School of Economics and Management, University of Minho}
\affil[d]{INESC-ID and Instituto Superior T\'{e}cnico, Universidade de Lisboa}
%\affil[d]{INESC-ID and Instituto Superior T\'ecnico, Universidade de Lisboa}

\maketitle

\citationmode{full}

\begin{abstract}
\noindent This paper presents a novel analysis of two feed-in tariffs (FIT) under market and regulatory uncertainty, namely a sliding premium with cap and floor and a minimum price guarantee. Regulatory uncertainty is modeled with a Poisson process, whereby a jump event may reduce the tariff before the signature of the contract. Using a semi-analytical real options framework, we derive the project value, the optimal investment threshold, and the value of the investment opportunity for these schemes. Taking into consideration the optimal investment threshold, we also compare the two aforementioned FITs with the fixed-price FIT and the fixed-premium FIT, which are policy schemes that have been extensively studied in the literature. Our results show that increasing the likelihood of a jump event lowers the investment threshold for all the schemes; moreover, the investment threshold also decreases when the tariff reduction increases.  We also compare the four schemes in terms of the corresponding optimal investment thresholds. For example, we find that the investment threshold of the sliding premium  is lower than the minimum price guarantee. This result suggests that the first regime is a better policy than the latter because it accelerates the investment while avoiding excessive earnings to producers.
\end{abstract}
\noindent
 
\noindent
\textbf{Keywords}: Investment analysis, Real Options, Feed-in Tariff, Regulatory Uncertainty\\
\textbf{JEL Classification}: L94, Q42, C72
\section{Introduction}

The quest for an optimal policy for incentivizing renewable energy production has led policymakers to create several support schemes over the past decades. Many jurisdictions use different instruments to support renewable energy production, such as feed-in tariffs, quota obligations with green-certificate trading, tenders, and tax exemptions.  A key aim of these support schemes is to accelerate the investment decision of renewable energy projects while making energy production more sustainable \cite{Couture10}.

According to the Renewables 2019 Global Status Report \cite{ren21_2019}, 111 jurisdictions were using feed-in tariff policies by the end of 2018, while 48 jurisdictions held auctions (i.e., tenders) in 2018, and 33 jurisdictions were using quota obligations with green-certificate trading in 2018. We can thus conclude that feed-in policies continue to play an important role in the policy-making landscape of renewable energies. 

Feed-in tariffs (FITs) are price-based schemes with long-term contracts, whereby an investor receives a fixed or variable remuneration for a long period (e.g., 15 - 20 years). According to \citeasnoun{Couture10}, the feed-in policies can thus be classified into two groups. First, the market-independent FITs are long-term contracts that pay remunerations that are not tied to the energy market price. These feed-in policies are suited to support small scale projects or less-mature technologies because they protect investors from price risk and thus reduce the cost of capital. The second group is the market-dependent FITs or feed-in premiums, whereby investors receive a fixed or variable premium over the energy market price. This category includes support schemes such as a fixed premium, a minimum price guarantee and a sliding premium with cap and floor\footnote{The feed-in premium payments can be designed to be either a fixed predetermined amount over the market price or sliding where the premium varies as a function of the market price.}.

The \citeasnoun{ec13} sent out a recommendation for policymakers to shift from the popular market-independent FITs to the feed-in premiums, because they believe that energy market signals should be the key drivers of investment and production decisions. Feed-in premiums can thus create incentives for producers to optimize investment decisions, plant design, and production according to market signals.

This work uses the real options framework to present a novel analysis of two feed-in premiums, namely a FIT with a minimum price guarantee (i.e., price-floor regime) and a FIT with a sliding premium with cap and floor (i.e., collar regime). We also compare these two feed-in premiums with two popular feed-in tariffs, namely a fixed price and fixed premium, that serve as baselines for our analysis. The aim is to compare the key aspects of these support schemes that trigger a renewable energy investment  not only from an investor's perspective but also from a policymaker's viewpoint. Netherlands \cite{Agnolucci07}, Ireland \cite{Doherty11}, and Switzerland \cite{Couture10b} are examples of jurisdictions that have used variations of the price-floor regime within a FIT contract. In addition, Spain \cite{Barcelona12} is an example of a jurisdiction that used the sliding premium with cap and floor. 

Since its inception in 1978, policy revisions of FITs have occurred due to changing market conditions and the need to meet specific goals of each jurisdiction. \citeasnoun{Ritzenhofen16} state that policymakers adjust feed-in policies in order to accelerate renewable energy investments and to find cost-effective policies that are subject to budget constraints. Hence, we also analyze the impact of policy revisions on the optimal investment timing of feed-in premiums.

This work aims to shed some light on some challenging policy-making decisions. In particular, we focus on the following research questions:

\begin{itemize}
    \item How does a FIT with a minimum price guarantee and a FIT with a sliding premium with cap and floor affect the timing of an investment decision, especially in comparison with other feed-in policies that have been extensively studied in the literature?
    \item How does regulatory uncertainty impact the timing of an investment decision when policymakers may reduce the tariff of these two feed-in premiums?
\end{itemize}

Within the scientific literature, \citeasnoun{Kim12} also compare four different feed-in policies, namely the fixed price, the fixed premium, the minimum price guarantee, and the sliding premium with cap and floor. However, the work focuses on a different optimization problem, where the authors optimize the tariffs in order to maximize the number of sign-ups. In contrast, our work tackles a different optimization problem, where we consider a fixed tariff and derive the optimal investment thresholds. In the same line of research, the paper from \citeasnoun{Chronopoulos16} focuses on the same optimization problem as our work, where policy uncertainty is also taken into consideration. However, the work only analyzes a fixed-premium FIT and not the minimum price guarantee and the sliding premium with cap and floor. We thus build on these previous research works to present a more complete analysis of FITs and how the optimal investment timing can change under regulatory uncertainty.

Comparing our results with the work from \citeasnoun{Chronopoulos16}, we also find that investment accelerates when the fixed-premium FIT has a higher probability of retraction. In addition, we show that this results holds for the other feed-in policies, because investors anticipate the decision in order to obtain a higher tariff. Our results also show that the risk of a tariff reduction has a higher impact on the optimal investment timing of the fixed-price and the fixed-premium regimes than the price-floor (i.e., minimum price guarantee) and the collar (i.e., sliding premium with cap and floor) regimes. We also present more results aimed at policy-making decisions, when comparing the optimal investment thresholds of the FIT schemes. For instance, the investment threshold of the sliding premium is lower than the threshold of the  minimum price guarantee. This result suggests that the sliding premium is a better policy than the minimum price guarantee because the former policy scheme accelerates the investment while avoiding excessive earnings\footnote{Many works that have analyzed a FIT with a sliding premium with cap and floor state that this policy scheme avoids excessive earnings to producers, such as the work from \citeasnoun{Cossent11}.} to producers.

The remainder of this paper is organized as follows. Section \ref{sec:relWork} presents the literature review and our main contributions to literature. In Section \ref{sec:opt}, we formalize the optimization problem that this paper addresses, and Section \ref{sec:sliding_premium} presents the model for the sliding premium with cap and floor. In Section \ref{sec:otherFITs}, we then present the models for the other FIT schemes, namely the minimum price guarantee, the fixed price, and the fixed premium. Section \ref{sec:Comparative statics analysis Trigger} presents the analytical and numerical study of the investment thresholds for all types of FIT design. Finally, we present the concluding remarks in Section \ref{sec:conclusions}.

\section{Literature review}
\label{sec:relWork}

\citeasnoun{Myers77} was the first scholar to coin the term ``real options'' with an aim to create a methodology to better value investment projects in the presence of managerial flexibilities. \citeasnoun{Tourinho79} presented the first real options framework which analyzed the option to extract oil when future prices are uncertain. A few years later, \citeasnoun{Brennan85} used the real options framework to analyze mineral extraction investments. In addition, \citeasnoun{McDonald86} derived the optimal investment rule which takes into account the value of waiting. The authors show that the standard net present value (NPV) analysis is grossly wrong.

Many managers in the corporate world rely on the NPV rule to analyze investment projects. A project with a positive NPV is considered to have a profitable outcome. In contrast, a negative NPV is considered to result in a net loss. However, the standard NPV valuation does not take into account uncertainties and managerial flexibilities that are present in dynamic environments; thus, it underestimates the value of the project. For example, the NPV method does not add the value of several ``options'' (i.e., managerial flexibilities) that managers use in practice, such as gathering information from the market and waiting to invest until market conditions are favorable. Consequently, a project with a positive NPV may be optimally deferred, or  a project with a negative NPV may lead to a profitable outcome due to the added value of these options. For instance, the case study from \citeasnoun{Rocha07} shows how the added value of these options can impact an investment project. The real options approach can thus provide a good framework to value investment in the presence of uncertainties and managerial flexibilities, such as renewable energy projects.

According to \citeasnoun{Cesena13}, the real options approach can be very useful for the decision-making process of renewable energy investments. Renewable energy projects are irreversible and present several uncertainties, such as market price uncertainty and regulatory uncertainty. In addition, managers have several options in the decision-making process, such as the waiting option. %The authors state that many research opportunities are still available for applying the real options methodology within the renewable energy field.

In fact, many scholars have used the real options approach to analyze renewable energy technologies in the scientific literature. For instance, \citeasnoun{Shao17} proposed a framework for policymakers that finds the optimal subsidies or optimal price discount rates to incentivize the adoption of electric vehicles. \citeasnoun{Bonneuil16} analyze the optimal moment for adopting renewable energies and the optimal rate of adoption. \citeasnoun{Kowalski09} use a participatory multi-criteria analysis to assess different renewable energy scenarios of technology deployment. However, these works do not analyze the feed-in tariffs as an incentive to accelerate the investment decision of renewable energy projects.

\citeasnoun{Doherty11} use a real options approach to analyze the Irish REFIT program, which is a variation of a FIT with a minimum price guarantee. With a Generalized Extreme Value distribution, the authors forecast the mean expected market value of wind and the value of the put option, given a strike price, in order to analyze the efficiency of the Irish FIT scheme in comparison with the fixed-price FIT. The results show that the Irish FIT scheme creates a similar or lower incentive than the fixed-price FIT for wind projects.  \citeasnoun{Kim12} present an analysis of four different FIT schemes, namely fixed price, fixed premium, minimum price guarantee, sliding premium with cap and floor. In particular, the work uses a binomial lattice and simulation (i.e., real options numerical technique) to compare the four different FITs, whereby the work focuses on how different scenarios (e.g., markets with different price volatilities) can impact the number of sign-ups of renewable energy projects. The results suggest that the number of sign-ups increases in more volatile markets, to the detriment of a higher cost for ratepayers. While these two research works analyze FIT schemes from an efficiency viewpoint, they are based on a different optimization problem. In particular, these works do not analyze the optimal investment thresholds and the influence of regulatory uncertainty on the timing of the investment decision.

\citeasnoun{Boomsma12} examine renewable energy investments under different policy schemes, namely a fixed-price FIT, a fixed-premium FIT, and a renewable energy certificate trading scheme. With a semi-analytical framework, the authors derive the optimal investment threshold and the optimal capacity to install. The work also assumes multiple sources of uncertainty (e.g.,  steel spot price and subsidy payment) and uncertainty with respect to any change of support scheme. The results show that fixed-price FITs create incentives for speeding up investment while the certificate trading generates larger projects. Following a similar approach, \citeasnoun{Boomsma15} employ  a semi-analytical real options framework to analyze policy uncertainty for the same  policy schemes in \protect \citeasnoun{Boomsma12}.  The policy uncertainty considers that a scheme may be terminated at any point in time. The results show that policy uncertainty increases the investment threshold when the termination decision is retroactively applied. In contrast, investment is accelerated when the decision is not retroactively applied. 
Our work is significantly different from \citeasnoun{Boomsma12} and \protect \citeasnoun{Boomsma15} because we derive two new support schemes, namely the minimum price guarantee and sliding premium. In addition, our  numerical study has novel results for policymakers regarding the comparison of the optimal investment thresholds of four FIT schemes. 

\citeasnoun{Ritzenhofen16} use a real options approach to analyze the effect of regulatory uncertainty on investment behavior, where a policymaker may decide to switch from a fixed-price FIT to a free-market regime (or the reverse case). The research work shows that regulatory uncertainty delays the investment when the FIT tariff is close to the market price. If the FIT tariff is significantly higher than the market price, then the regulatory uncertainty accelerates investment behavior. With a semi-analytical real options framework, \citeasnoun{Chronopoulos16} also analyze the same optimization problem as our work. The model includes a fixed-premium FIT where investment may occur within a single stage or through multiple stages. The work uses a Poisson process to model the policy uncertainty, whereby the FIT may be introduced or retracted multiple times. The results show that investment accelerates when the fixed-premium FIT has a higher probability of retraction, which is the same result in our work. However, we show that this result also holds to the other FIT schemes. In addition, the authors find that a permanent subsidy retraction reduces the amount of capacity installed. In summary, these two research works analyze a renewable energy investment under regulatory uncertainty; however, our work has several differences. First, we analyze two new FITs, namely a minimum price guarantee and a sliding premium, which are not included in the analysis of these two papers. Second, we consider that a policymaker may reduce the tariff instead of a regime-switching scenario of subsidies. Lastly, we present several policy-related results regarding the optimal investment thresholds of the aforementioned feed-in premiums that are not discussed in these two papers.

In summary, to the best of our knowledge, our paper presents a novel analysis of the optimal investment thresholds of finite-lived FITs with a minimum price guarantee and a sliding premium under regulatory uncertainty, whereby a policymaker may reduce the tariff before the signature of the contract. With  a semi-analytical real options framework, we derive the value of the project, the option value, and the optimal investment threshold of each FIT scheme. In addition, we also compare these two FITs with a fixed-price FIT and fixed-premium FIT in order to draw conclusions for both investors and policymakers.

\section{Optimization problem}
\label{sec:opt}

Throughout the paper, we assume that the energy market price changes stochastically over time, under the risk-neutral measure. We denote it by $P=\{P_t, t \geqslant 0\}$, where $P$ is the solution of the following stochastic differential equation:

\begin{equation}
\label{eq:gbm}
    dP_t=\mu P_t dt + \sigma P_tdW_t
\end{equation}
with $W=\{W_t, t \geqslant 0\}$ being a Brownian motion, $\mu$  the risk-neutral drift, and $\sigma$  the volatility. 

Note that we assume that energy market prices follow a geometric Brownian motion and this can be considered a simplifying assumption. However, if we consider other dynamics - like a mean-reverting process - the change in the qualitative results would be small, according to \protect \citeasnoun{schwartz1998valuing}. Moreover, as suggested by \protect \citeasnoun{Boomsma12}, the drift $\mu$ can be estimated from forward/futures prices.

We assume that before investment the firm has a zero payoff and that the investment cost is $I$. We let $r$  denote the interest rate under the risk-neutral measure. In order to avoid trivial or ill-posed problems, we assume that $\mu<r$. The optimization problem that we want to address is the following:
\begin{equation}
\label{eq:VF}
F_S(P)=\sup_{\tau} E \left[  \int_\tau^{\infty} \pi_S(P_t)e^{-rt}dt - Ie^{-r\tau}|P_0=P\right]
\end{equation}
where $\pi_S$ denotes the profit function upon investment, that we assume to be of the following type:
\begin{equation}
\pi_S(P_t)=\begin{cases} \Pi_S(P_t) & \tau \leqslant t \leqslant \tau+T\\ P_tQ & t>\tau+T
\end{cases}
\end{equation}
with $\tau$ being the time at which investment takes place, $\Pi_S$ is the profit of the firm as long as the subsidy is received, and  $T$ is the duration of the contract. 

Throughout this paper, we consider four types of subsidy schemes, which lead to different investment strategies of the firm. Therefore, we use the subscript $S$ to denote the particular subsidy that we are considering. The subsidies impact  the profit function $\Pi_S$ of the firm as we will see.  Furthermore, we assume that after the subsidy has been removed, the profit of the firm depends solely on the market price and the quantity $Q$ (total annual production), which we assume to be fixed over time. We note that  similar assumptions have been used in \citeasnoun{Boomsma12} and \protect \citeasnoun{Boomsma15}, for instance.\footnote{In their papers, the authors assume that the production is fixed, depending only on the capacity installed by the firm. Although for short intervals of time, production of renewable energy sources vary with weather conditions, for longer intervals the production is more stable.}

%\mynotes{(the investment starts at $\tau$, right? And we have used $T$ as the duration of the contract; thus, the subsidy ends at $\tau+T$)} 

If the firm invests at time $\tau$ when the current energy market price is $P$, then it will receive the following amount:
\begin{equation}
\label{eq:VS}
V_S(P)=E\left[\int_\tau^{\tau+T} \Pi_S(P_t)e^{-rt}dt+\int_{\tau+T}^{\infty} P_tQe^{-rt}dt|P_0=P\right].
\end{equation}

%\mynotes{(Does the integral above start at 0 or $\tau$? I have also changed $T$ in the integral to $\tau+T$)}

Using (\ref{eq:VS}) in (\ref{eq:VF}), we conclude that the optimization problem that we propose to solve is the following:
\begin{equation}
\label{eq:VFVS}
F_S(P)=\sup_{\tau} E \left[ \int_\tau^{\tau+T}\Pi_s(P_t)e^{-rt}dt+\int_{\tau+T}^
{\infty} P_tQe^{-rt}dt - Ie^{-r\tau} |P_0=P\right].
\end{equation}

The problem described in (\ref{eq:VFVS}) is a standard investment problem, which can be solved using the dynamic programming principle. We refer, for instance, to \protect\citeasnoun{dixit1994investment}, for the main results about investment problems in a context of real options. Applying such results leads to the following (general) solution of (\ref{eq:VFVS}): 

\begin{equation}
\label{eq:FS}
F_S(P)=\begin{cases} A_1 P^{\beta_1} & 0<P<P^{\ast}\\
V_S(P)-I & P\geqslant P^{\ast}
\end{cases}
\end{equation}
with 
\begin{equation}
\label{eq:quadratic_beta1}
\beta_1 = \dfrac{1}{2} - \dfrac{\mu}{\sigma^2} + \left( \left(-\dfrac{1}{2} + \dfrac{\mu}{\sigma^2} \right)^2 + \dfrac{2 r}{\sigma^2} \right)^\frac{1}{2}>1
\end{equation}
and $A_1$ and $P^{\ast}$ being such that $A_1 {P^{\ast}}^{\beta_1} =V_S(P^{\ast})-I$ and $\beta_1 A_1 {P^{\ast}}^{\beta_1-1}=V_S'(P^{\ast})$ (i.e., the so-called value matching and smooth pasting conditions). 

In the remainder of the paper, we let $F$ denote the feed-in tariff and assume it to be fixed.  The four different types of feed-in tariff contracts that we consider in the paper are the following:

\begin{itemize}
\item Sliding premium with cap and floor, with $\Pi_C(P):=\min\left(\max(P,F),C \right)Q$; in this particular case, the tariff $F$ corresponds to the price floor, $C$ is the price cap, and $C\geqslant F$.
\item Minimum price guarantee, with $\Pi_M(P):=\max(P,F)Q$.
\item Fixed-price scheme, for which $\Pi_F(P):=FQ$.
\item Fixed-premium scheme, for which $\Pi_P(P):=(P+F)Q$.
\end{itemize}

Moreover, in this paper, we extend the framework to include policy uncertainty, whereby a downward jump in the tariff $F$ may occur according to a Poisson process. Note that the Poisson process was also used in \protect \citeasnoun{Boomsma15} and \protect \citeasnoun{Chronopoulos16} to model policy uncertainty\footnote{Alternative approaches, such as regime switching, have been used by \protect \citeasnoun{Boomsma12} and  \protect \citeasnoun{Ritzenhofen16}.}.

Let $Y$ denote the time at which this change in the tariff occurs; in view of the assumption of a Poisson process, $Y$ follows an exponential random variable with parameter $\lambda$. We assume, furthermore, that the price process in (\ref{eq:gbm}) and the time at which this tariff change occurs are uncorrelated. Moreover, the tariff decreases by multiplying the tariff $F$ with $\omega$ (i.e., the new tariff is equal to $\omega F$), where $0 \leqslant \omega \leqslant 1$ . 

In order to take into account the reduction of the tariff, we change slightly the notation, where $\Pi_S(P,F)$ and $\pi_S(P,F)$, for $S \in \{M,F,P\}$\footnote{$M, F, P$ corresponds to minimum price guarantee, fixed price, and fixed premium respectively.}, denote the profit flow under regime $S$ when the tariff received is equal to $F$. In addition, $\Pi_S(P,\omega F)$ denotes the corresponding change in the profit flow when there is a reduction of the subsidy, changing from $F$ to $\omega F$. In case of retraction of a subsidy (i.e., the subsidy is totally abolished), $\Pi_S(P,0)$ denotes the change in the profit flow.

For the sliding premium case, we also use a similar notation as above. Since this subsidy has a cap ($C$) and a floor ($F$), we denote the profit flow by $\Pi_C(P, F,C)$. Thus, $\Pi_C(P, \omega F, \omega_C C)$ is the profit flow when the price floor is reduced to $\omega F$ and the price cap to $\omega_C C$.

We assume that once the firm invests, there will not be any change in the FIT contract (i.e., the tariff $F$ does not change after the firm invests). Therefore, the optimization problem may be written as follows:
\begin{equation}
F_{SR}(P)=\sup_{\tau} E \left[ \int_{\tau}^{\tau+T}e^{-r t}\left(\pi_S(P_t,\omega F)1_{\{Y<\tau\}}+\pi_S(P_t,F)1_{\{Y>\tau\}}\right)dt+\frac{PQ}{r-\mu}e^{-(r-\mu)(\tau+T)}\right]
\end{equation}
with $1_{\{Y<\tau\}}=1$ if $Y<\tau$ and $1_{\{Y>\tau\}}=1$ if $Y>\tau$. We use the notation  $F_{SR}$ in order to distinguish the case of {\bf R}egulatory uncertainty. Therefore, once the firm decides to undertake the investment, we assume that there is no uncertainty about the tariff contract.

With these remarks, we proceed with the analysis of the problem. We need to consider two cases:
\begin{itemize}
\item[i)] The tariff has already decreased, thus the firm knows that the tariff is $\omega F$ during a period $T$. Therefore, we denote $F^{(\omega)}_{S}(  P )$ as a value function equal to $F_S$, as defined in (\ref{eq:FS}), where we substitute $\omega F$ for $F$.

\item[ii)] At the current time, the value of the tariff has not been subject to a decrease. In this case, there is uncertainty about the evolution of the prices and the time at which the reduction of the subsidy occurs (relevant only before the investment time).  Thus, the corresponding  infinitesimal generator is now  of the following form:
\begin{equation}
\label{IG}
{\cal L}_{SR}F_{FR}(P)=\mu \:P \dfrac{\partial F_{SR}( P )}{\partial P}+0.5 \sigma^2 \: P^2 \: \dfrac{\partial^2 F_{SR}( P )}{\partial P^2}+\lambda[F^{(\omega)}_{S}(P) - F_{SR}(P)]
\end{equation}
where $1/\lambda$ denotes the expected time until the change in the policy occurs; see \citeasnoun{oksendal2005applied} for further details.

In \eqref{IG}, the term $[F^{(\omega)}_{S}(  P ) - F_{SR}( P )]$ accounts for the change in the value in case the tariff decreases, which, in an infinitesimal interval, occurs with probability $\lambda dt$. Note that this term is precisely the major difference with the case where there is not regulatory uncertainty. Moreover, for $P$ in the continuation region (i.e, for values of $P$ where investment is not yet optimal), the value function is solution of the following equation:

\begin{equation}
\mu \:P \dfrac{\partial F_{SR}( P )}{\partial P}+0.5 \sigma^2 \: P^2 \: \dfrac{\partial^2 F_{SR}( P )}{\partial P^2}-rF_{SR}( P ) +\lambda[F^{(\omega)}_{S}(P) - F_{SR}( P )] = 0
\end{equation}

Recall that $F^{(\omega)}_{S}(P)$ is (\ref{eq:FS}) substituting $\omega F$ for $F$. As $\omega$ is smaller than one, the investment trigger with $\omega F$ is greater than the investment trigger with $F$, because if one does not invest when the tariff is $F$, he will not invest when the tariff is $\omega F$ either. Therefore, the second branch in (\ref{eq:FS}) will never occur. And consequently, when the tariff is $\omega F$, $P$ is also in the continuation region. $F^{(\omega)}_{S}( P )$ is then equal to $A_1^{(\omega)} P^{\beta_1}$, and the value function before the investment must be the solution of the following equation:

\begin{equation}
\label{eq:ODE_fixed_ru2}
\mu \:P \dfrac{\partial F_{SR}( P )}{\partial P}+0.5 \sigma^2 \: P^2 \: \dfrac{\partial^2 F_{SR}( P )}{\partial P^2}-rF_{SR}( P ) +\lambda[A_1^{(\omega)} P^{\beta_1} - F_{SR}( P )] = 0
\end{equation}

In general, the solution of (\ref{eq:ODE_fixed_ru2}) is of the following form \cite{nunes2017analytical}:
\begin{equation}
\label{eq:Ffixed_RU}
F_{SR}( P ) = B_1 P^{\eta_1} + A_1^{(\omega)}  P^{\beta_1} 
\end{equation}
where $\eta_1$ is the positive root of the quadratic equation:
\begin{equation}
\label{eq:etas}
0.5\sigma^2\eta(\eta-1)+\mu\eta-(r + \lambda) = 0.
\end{equation}
Hence,
\begin{equation}
\label{eq:quadratic_eta1}
\eta_1 = \dfrac{1}{2} - \dfrac{\mu}{\sigma^2} + \left( \left(-\dfrac{1}{2} + \dfrac{\mu}{\sigma^2} \right)^2 + \dfrac{2 (r + \lambda)}{\sigma^2} \right)^\frac{1}{2}.
\end{equation}

Therefore, in case of regulatory uncertainty, the value function takes the following form:
\begin{equation}
\label{eq:FSR}
F_{SR}(P)=\begin{cases}  B_1 P^{\eta_1} + A_1^{(\omega)}  {P_{SR}}^{\beta_1} 
 & 0<P<P_{SR}^{\ast}\\
V_S(P)-I & P\geqslant P_{SR}^{\ast}
\end{cases}
\end{equation}
where $P_{SR}^{\ast} $ denotes the investment threshold when the feed-in tariff used is $S$ and under regulatory uncertainty.
\end{itemize}

The constants $A_1$ and $A_1^{(\omega)} $ are found, in general, by fit conditions, so that $F_S$ and $F_{SR}$ are continuous functions, with first order 
derivative for all possible values of $P$ (see \citeasnoun{oksendal2005applied}).

Next, we derive the value function and the investment thresholds for the aforementioned FIT schemes, without and with regulatory uncertainty, using the results of this section. The minimum price guarantee is a special case of the sliding premium with cap and floor, when the cap $C\to \infty$. Moreover, the fixed price is also special case of the sliding premium, when the value of the cap is equal to the value of the floor (i.e., $C=F$). For this reason, as we will see in the following sections, we derive first the results for the sliding premium with cap and floor, and then use these results to derive the other two regimes. 

% Fim da adição de Cláudia

\section{Sliding premium with cap and floor FIT}
\label{sec:sliding_premium}

In this section, we derive the value function and the investment trigger for the sliding premium with cap and floor, also known as a collar regime. A collar regime has the following remuneration conditions: (i) the producer receives a price floor $F$ when the market price is below the price floor, (ii) the producer receives a price cap $C$, where $C \geqslant F$, when the market price is above the price cap, and (iii) the producer sells energy for the electricity market price $P$ when the market price lies between $F$ and $C$. 

Regarding the notation, we denote the payoff upon investment, the value function, and the investment threshold by $V_C, F_C$ and $P_C^{\ast}$, respectively.  As we also include the derivation of the same quantities under regulatory uncertainty, we use $V_C^{(\omega)}, F_C^{(\omega)}$, and $P_C^{\ast (\omega)}$ to denote the payoff upon investment, the value function, and the investment threshold when the price floor is $\omega F$ and the price cap is $\omega_C C$. Finally, $F_{CR}$ and $P_{CR}^{\ast}$ denote the value function and investment threshold under regulatory uncertainty.

The instantaneous profit function of the renewable energy project with a sliding premium  is given by:

\begin{equation}
\Pi_C(P) = \text{Min}(\text{Max}(P,F), C) Q 
\end{equation}

Next, we present the results concerning the investment thresholds for both cases, namely without and with regulatory uncertainty.

\subsection{No regulatory uncertainty}
We build on the work from \citeasnoun{adkins2019investment} in order to derive the sliding premium  and extend it in order to model a policy change before the project starts, with a Poisson process. In the above paper, the authors provide the value of  the project with a perpetual collar contract, given by:
\begin{equation}
\label{eq:ProjectValueC}
V_{CP}(P) =  
\begin{cases}
E_1 P^{\beta_1} + \dfrac{F Q }{r} & \textrm{ for }  P < F  \\ \\
G_1 P^{\beta_1} + G_2 P^{\beta_2} + \dfrac{P Q}{r - \mu} & \textrm{ for }  F \leqslant P < C  \\ \\
H_2 P^{\beta_2} + \dfrac{C Q }{r} & \textrm{ for }  P \geqslant C
\end{cases} 
\end{equation}
where:
\begin{align}
\label{eq:E1}
E_1 &= \dfrac{(F^{1-\beta_1}-C^{1-\beta_1})Q}{\beta_1-\beta_2} \left(\dfrac{\beta_2} {r}-\dfrac{\beta_2 - 1} {r - \mu}\right) <0 \\
\label{eq:L1}
G_1 &= -\dfrac{C^{1 - \beta_1} Q } {\beta_1 - \beta_2} \left(\dfrac{\beta_2} {r}-\dfrac{\beta_2 - 1} {r - \mu}\right) <0 \\
\label{eq:G2}
G_2 &= \dfrac{F^{1-\beta_2} Q } {\beta_1 - \beta_2}\left(\dfrac{\beta_1} {r} - \dfrac{\beta_1 - 1} {r - \mu}\right)>0 \\
\label{eq:M2}
H_2 &= \frac{(F ^{1 -\beta_2}-C^{1-\beta_2})Q}{\beta_1-\beta_2} \left(\dfrac{\beta_1} {r} - \dfrac{\beta_1 - 1} {r - \mu}\right) <0
\end{align}

We calculate $E_1$, $G_1$, $G_2$, and $H_2$ by equating the values and derivatives because $V_{MP}(P)$ must be continuously differentiable along $P$. In addition, $\beta_1$ is presented in \eqref{eq:quadratic_beta1} and $\beta_2$ is: 

\begin{equation}
\label{eq:quadratic_beta2}
\beta_2 = \dfrac{1}{2} - \dfrac{\mu}{\sigma^2} - \left( \left(-\dfrac{1}{2} + \dfrac{\mu}{\sigma^2} \right)^2 + \dfrac{2 r}{\sigma^2} \right)^\frac{1}{2}<0.
\end{equation}
Moving on to the finite case ($T<\infty$), we start by deriving the value received upon investment (i.e, the value of the project), as defined in (\ref{eq:VS}).

\begin{proposition}
\label{propVC}
The value of the project with a finite collar scheme is given by: 
\begin{equation}
\label{eq:VALC}
V_{C} (P) = V_{CP}(P) - S_C(P) + \dfrac{P Q}{r-\mu}e^{-(r-\mu)T}
\end{equation}
where 
\begin{multline}
\label{eq:SC}
S_C(P) =  E_1 P^{\beta_1} \Phi(-d_{\beta_1}(P,F)) + \dfrac{F Q }{r}e^{-rT} \Phi(-d_0(P,F)) \\
+ G_1 P^{\beta_1}(\Phi(d_{\beta_1}(P,F)) - \Phi(d_{\beta_1}(P,C))) 
+ G_2 P^{\beta_2}(\Phi(d_{\beta_2}(P,F)) - \Phi(d_{\beta_2}(P,C))) \\ + \dfrac{P Q}{r - \mu}e^{-(r - \mu)T}(\Phi(d_{1}(P,F)) - \Phi(d_{1}(P,C))) \\
+ H_2 P^{\beta_2}\Phi(d_{\beta_2}(P,C)) + \dfrac{C Q }{r}e^{-rT}\Phi(d_0(P,C))
\end{multline}
where, in the above equations, $\Phi(.)$ denotes the standard normal cumulative distribution, with
\begin{align}
\label{eq:d_beta}
d_{\beta}(P,X) &= \dfrac{\ln{\dfrac{P}{X}}+ \left(\mu + \sigma^2 \left(\beta -\dfrac{1}{2}\right)\right)T}{\sigma\sqrt{ T }},  \text{for } \beta \in \{0,1,\beta_1,\beta_2\}.
\end{align}
\end{proposition}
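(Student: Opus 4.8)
The plan is to reduce the finite-horizon value $V_C$ to the already-known perpetual value $V_{CP}$ through a decomposition of the time integral, and then to evaluate the residual ``tail'' expectation in closed form using the Markov property of $P$ together with truncated log-normal moment formulas. Since $P$ is a time-homogeneous geometric Brownian motion, I would first set $\tau=0$ without loss of generality and write, from (\ref{eq:VS}),
\[
V_C(P)=E\!\left[\int_0^T \Pi_C(P_t)e^{-rt}\,dt \,\Big|\, P_0=P\right]+E\!\left[\int_T^{\infty} P_tQ\,e^{-rt}\,dt\,\Big|\,P_0=P\right].
\]
The second term is immediate: using $E[P_t\mid P_0=P]=Pe^{\mu t}$ and $\mu<r$,
\[
E\!\left[\int_T^{\infty} P_tQ\,e^{-rt}\,dt\right]=Q P\int_T^{\infty}e^{-(r-\mu)t}\,dt=\frac{PQ}{r-\mu}e^{-(r-\mu)T},
\]
which is exactly the last summand of (\ref{eq:VALC}).

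For the first term I would write $\int_0^T=\int_0^{\infty}-\int_T^{\infty}$. The infinite integral is, by definition, the perpetual collar payoff $V_{CP}(P)$ of (\ref{eq:ProjectValueC}), so it remains to identify
\[
S_C(P):=E\!\left[\int_T^{\infty}\Pi_C(P_t)e^{-rt}\,dt\,\Big|\,P_0=P\right]
\]
with (\ref{eq:SC}). The crucial step is to condition on the history up to time $T$: substituting $t=T+s$ and invoking the Markov property, $\{P_{T+s}\}_{s\ge0}$ is, given that history, a geometric Brownian motion started at $P_T$, whose discounted perpetual collar payoff is precisely $V_{CP}(P_T)$. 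Hence
\[
S_C(P)=e^{-rT}\,E\!\left[V_{CP}(P_T)\mid P_0=P\right].
\]

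It then remains to integrate the piecewise expression (\ref{eq:ProjectValueC}) against the log-normal law of $P_T$. I would split the expectation over the three events $\{P_T<F\}$, $\{F\le P_T<C\}$, $\{P_T\ge C\}$, reducing everything to truncated moments $E[P_T^{\beta}1_{\{P_T<X\}}]$ and $E[P_T^{\beta}1_{\{P_T\ge X\}}]$ for $\beta\in\{0,1,\beta_1,\beta_2\}$ and $X\in\{F,C\}$. Completing the square in the Gaussian integral gives the standard identity
\[
E\!\left[P_T^{\beta}1_{\{P_T<X\}}\mid P_0=P\right]=P^{\beta}\,e^{\left(\beta\mu+\frac12\beta(\beta-1)\sigma^2\right)T}\,\Phi\!\left(-d_{\beta}(P,X)\right),
\]
with the complementary event obtained by replacing $\Phi(-d_\beta)$ by $\Phi(d_\beta)$, and with $d_\beta$ exactly as in (\ref{eq:d_beta}).

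The main point — the step that makes the exponentials collapse — is that $\beta_1$ and $\beta_2$ solve the fundamental quadratic $\tfrac12\sigma^2\beta(\beta-1)+\mu\beta-r=0$, so that $\beta\mu+\tfrac12\beta(\beta-1)\sigma^2=r$ for $\beta\in\{\beta_1,\beta_2\}$. Consequently the growth factor $e^{(\cdot)T}$ attached to each $P_T^{\beta_1}$- and $P_T^{\beta_2}$-term equals $e^{rT}$ and cancels the prefactor $e^{-rT}$, leaving the clean coefficients $E_1P^{\beta_1}$, $G_1P^{\beta_1}$, $G_2P^{\beta_2}$, $H_2P^{\beta_2}$; the $\beta=1$ term carries $e^{\mu T}$ and produces the factor $e^{-(r-\mu)T}$, while the two constant ($\beta=0$) terms keep the bare $e^{-rT}$. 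Collecting the seven contributions and using $\Phi(-d)=1-\Phi(d)$ to match signs over $\{F\le P_T<C\}$ reproduces (\ref{eq:SC}) term by term. I expect the only genuine bookkeeping to be this sign and normalization matching; the real content is the Markov reduction to $e^{-rT}E[V_{CP}(P_T)]$ and the cancellation driven by the roots of the fundamental quadratic, after a routine integrability check guaranteed by $\mu<r$.
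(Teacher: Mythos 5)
Your proposal is correct and follows essentially the same route as the paper: the identity $V_C=V_{CP}-S_C+\frac{PQ}{r-\mu}e^{-(r-\mu)T}$ via the split $\int_0^T=\int_0^\infty-\int_T^\infty$, with $S_C$ identified as the value of a delayed perpetual collar. The paper states this decomposition in two sentences and defers the evaluation of $S_C$ to Adkins et al.\ (2019), whereas you supply the missing computation explicitly — the Markov reduction $S_C(P)=e^{-rT}E[V_{CP}(P_T)]$, the truncated log-normal moments, and the cancellation $\beta\mu+\tfrac12\beta(\beta-1)\sigma^2=r$ for $\beta\in\{\beta_1,\beta_2\}$ — all of which check out term by term against \eqref{eq:SC}.
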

\begin{proof}
The value of the project with finite duration is equal to the value of the project with perpetual collar minus the value of the project with a delayed perpetual collar   (a collar that starts in the future moment $T$) plus the expected value of the total profit after the end of the subsidized contract. 

Equation (\ref{eq:SC}) corresponds to the second term. For more details see the work from \citeasnoun{adkins2019investment}.
\end{proof}

\bigskip

Therefore, we can present the value of the option and the investment threshold.

\begin{proposition}
\label{prop3}
The value of the investment option in a project with a sliding premium is given by:
\begin{equation}
\label{eq:optionvalueC}
F_{C} (P) =  
\begin{cases}
(V_{C}(P_C^{\ast}) - I)\left(\dfrac{P}{P_C^{\ast}}\right)^{\beta_1} & \textrm{ for } P < P_C^{\ast} \\ \\
V_{C}(P) - I & \textrm{ for } P \geqslant P_C^{\ast}\end{cases}
\end{equation}  
where the investment threshold, $P_C^{\ast}$, is the numerical solution of the following equation:
\begin{equation}
\label{eq:trigger_collar}
\beta_1 (V_C(P_C^{\ast}) - I) - \dfrac{\partial V_C( P_C^{\ast} )}{\partial P_C^{\ast}}P_C^{\ast} = 0
\end{equation}
\end{proposition}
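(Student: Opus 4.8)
The plan is to treat this as a standard optimal-stopping (investment-timing) problem and to apply directly the general solution form \eqref{eq:FS} with $S=C$. First I would argue that, in the continuation region $0<P<P_C^\ast$ where waiting is optimal, the option value $F_C$ carries no running payoff and hence solves the homogeneous equation $\tfrac12\sigma^2 P^2 F_C''+\mu P F_C'-rF_C=0$, whose general solution is a linear combination $A_1P^{\beta_1}+A_2P^{\beta_2}$ with $\beta_1,\beta_2$ the roots in \eqref{eq:quadratic_beta1} and \eqref{eq:quadratic_beta2}. Since $\beta_2<0$ would make the value explode as $P\to0$ --- economically, the opportunity must be worthless when the price collapses --- I would set $A_2=0$, leaving $F_C(P)=A_1P^{\beta_1}$. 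On the stopping region $P\geqslant P_C^\ast$ the firm invests at once and collects the project value net of cost, so $F_C(P)=V_C(P)-I$, with $V_C$ given by Proposition~\ref{propVC}.

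Next I would glue the two branches at the free boundary $P_C^\ast$ using the value-matching and smooth-pasting conditions recalled after \eqref{eq:FS}, namely $A_1(P_C^\ast)^{\beta_1}=V_C(P_C^\ast)-I$ and $\beta_1A_1(P_C^\ast)^{\beta_1-1}=V_C'(P_C^\ast)$. Solving value matching for the constant gives $A_1=(V_C(P_C^\ast)-I)/(P_C^\ast)^{\beta_1}$; inserting this into $A_1P^{\beta_1}$ reproduces the continuation branch $(V_C(P_C^\ast)-I)(P/P_C^\ast)^{\beta_1}$ of \eqref{eq:optionvalueC}. To obtain the threshold equation I would eliminate $A_1$ between the two conditions: substituting the expression for $A_1$ into smooth pasting yields $\beta_1(V_C(P_C^\ast)-I)/P_C^\ast=V_C'(P_C^\ast)$, and multiplying through by $P_C^\ast$ gives exactly \eqref{eq:trigger_collar}. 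Because $V_C$ is a sum of power terms and normal-cdf expressions (Proposition~\ref{propVC}), this transcendental equation admits no closed form, which is why the statement characterizes $P_C^\ast$ only as its numerical solution.

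The genuinely delicate points are not the algebra but the justification of the ansatz. The main obstacle is verifying that $V_C$ is continuously differentiable across the kinks at $P=F$ and $P=C$, so that smooth pasting is even meaningful wherever $P_C^\ast$ happens to lie; this is, however, precisely what is built into \eqref{eq:VALC}, where the constants $E_1,G_1,G_2,H_2$ were fixed by imposing $C^1$-matching. A second point I would address, at least heuristically and in line with the standard references quoted in the paper, is a verification argument: one should check that the candidate $F_C$ dominates the payoff $V_C(P)-I$ on the continuation region, that it satisfies the variational inequality of the stopping problem, and that the first-order condition \eqref{eq:trigger_collar} isolates a genuine maximum rather than a mere critical point (equivalently, that $A_1>0$ together with the appropriate second-order and transversality conditions). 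Rather than develop a full verification theorem, I would invoke the dynamic-programming results of \citeasnoun{dixit1994investment} cited earlier, which legitimize the value-matching/smooth-pasting solution for this class of problems, and then merely confirm that the resulting $P_C^\ast$ is finite, positive, and consistent with $A_1>0$.
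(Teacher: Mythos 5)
Your proposal is correct and follows essentially the same route the paper takes: Proposition~\ref{prop3} is simply the specialization to $S=C$ of the general solution \eqref{eq:FS} set out in Section~\ref{sec:opt}, where the continuation-region form $A_1P^{\beta_1}$ and the value-matching/smooth-pasting conditions are already established by appeal to \citeasnoun{dixit1994investment}, and eliminating $A_1$ between those two conditions yields \eqref{eq:trigger_collar} exactly as you describe. Your additional remarks on the $C^1$-regularity of $V_C$ and on verification are sound but go beyond what the paper itself supplies (it offers no separate proof of this proposition and explicitly declines to prove existence/uniqueness of the numerical root).
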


This equation results in three equations that are presented in Appendix \ref{app:collar}. The solution of the investment threshold $P_{C}^\ast$, computed using (\ref{eq:trigger_collar}), can only be found using numerical methods. Therefore, one may ask about the existence of admissible and uniqueness of these solutions. We are not able to provide a formal proof, but we have tested a range of different values for the parameters, such as the base-case parameters in Section \ref{sec:Comparative statics analysis Trigger}, and we were always able to find numerical solutions with the desired accuracy. This is also the case for the other regimes, with and without regulatory uncertainty.  Although these equations are, in general, highly non-linear, again our tests suggest that any mathematics solver can find numerical solutions.

\subsection{Regulatory uncertainty}
Next, we follow similar steps to derive the investment threshold, but now for the scenario with regulatory uncertainty. According to (\ref{eq:ODE_fixed_ru2}), we need to solve the following differential equation:
\begin{equation}
\label{eq:ODE_MCollar}
\mu \:P \dfrac{\partial F_{CR}( P )}{\partial P}+0.5 \sigma^2 \: P^2 \: \dfrac{\partial^2 F_{CR}( P )}{\partial P^2}-rF_{CR}( P ) +\lambda[ J_1^{(\omega)} P^{\beta_1} - F_{CR}( P )] = 0.
\end{equation}
where $J_1^{(\omega)}$ refers to the first branch of (\ref{eq:optionvalueC}), which accounts for the value of the investment option in a project with a sliding premium with cap $\omega_c C$ and floor $\omega F$. Hence:
\begin{equation}
\label{eq:J1_omega}
J_1^{(\omega)} = (V_{C}^{(\omega)}(P_C^{\ast (\omega)}) - I)\left(\dfrac{1}{P_C^{\ast (\omega)}}\right)^{\beta_1}
\end{equation}
where $P_C^{\ast (\omega)}$ is the correspondent of $P_C^{\ast}$, given by (\ref{eq:trigger_collar}), when the price floor is $\omega F$ and the price cap is $\omega_C C$.  The general solution of \eqref{eq:ODE_MCollar} is $K_1 P^{\eta_1} + J_1^{(\omega)} P^{\beta_1}$.

Note that $\omega$ and $\omega_C$ are smaller or equal to 1, consequently $\omega F \leqslant F$ and  $\omega_C C \leqslant C$. We can thus conclude that $P_C^{\ast} < P_C^{\ast (\omega)}$. In other words, a smaller price floor and price cap leads to a higher investment threshold, because an investor that is still waiting to invest when the price floor is $F$ and the price cap is $C$, will certainly be waiting to invest when the price floor and the price cap are lower (i.e., equal to $\omega F$ and $\omega_C C$).

Finally, we are able to present the value function and the investment threshold for this case. 

\begin{proposition}
The value of the option with a collar FIT scheme and regulatory uncertainty is given by:
\begin{equation}
\label{eq:option_floor_reg}
F_{CR} (P) =  
\begin{cases}
(V_{C}(P_{CR}^{\ast}) - I - J_1^{(\omega)} {P_{CR}^\ast}^{\beta_1})\left(\dfrac{P}{P_{CR}^{\ast}}\right)^{\eta_{1}} + J_1^{(\omega)} P^{\beta_1}   & \textrm{ for } P < P_{CR}^{\ast}  \\
\ \\
V_C (P) - I & \textrm{ for } P \geqslant P_{CR}^{\ast}   
\end{cases}
\end{equation}
The investment threshold $P_{CR}^{\ast}$ is the solution of the following equation, that must be solved numerically:
\begin{equation}
\label{eq:trigger_collar_reg}
\eta_1(V_C(P_{CR}^{\ast}) - I - J_1^{(\omega)} {P_{CR}^\ast}^{\beta_1}) + \beta_1 J_1^{(\omega)} {P_{CR}^\ast}^{\beta_1} - \dfrac{\partial V_C( P_{CR}^{\ast} )}{\partial P_{CR}^{\ast}}P_{CR}^{\ast} = 0
\end{equation}
This equation results in three equations that are presented in Appendix \ref{app:collar}.
\end{proposition}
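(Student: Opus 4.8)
The plan is to start from the general solution of the continuation-region equation \eqref{eq:ODE_MCollar}, which the text has already identified as $F_{CR}(P)=K_1 P^{\eta_1}+J_1^{(\omega)} P^{\beta_1}$, and to pin down the two remaining unknowns, the constant $K_1$ and the free boundary $P_{CR}^{\ast}$, by imposing boundary conditions. The homogeneous characteristic equation $0.5\sigma^2\eta(\eta-1)+\mu\eta-(r+\lambda)=0$ has one positive root $\eta_1$ and one negative root; the negative root is discarded by requiring $F_{CR}(P)$ to stay bounded as $P\to 0$ (no speculative bubble), which leaves only $P^{\eta_1}$. The inhomogeneous piece $J_1^{(\omega)} P^{\beta_1}$ is the particular solution: substituting $aP^{\beta_1}$ into the operator and using that $\beta_1$ annihilates $0.5\sigma^2\beta(\beta-1)+\mu\beta-r$, the bracketed coefficient collapses to $-\lambda$ and matches $\lambda J_1^{(\omega)}$, forcing $a=J_1^{(\omega)}$. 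Before invoking this ODE I would make explicit why $J_1^{(\omega)} P^{\beta_1}$ is the correct post-jump payoff: by the earlier argument that $P_C^{\ast}<P_C^{\ast(\omega)}$, immediately after a downward jump the price is still in the continuation region, so the firm holds the pure option value $J_1^{(\omega)} P^{\beta_1}$ rather than an exercised project.

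The two conditions at the free boundary are value matching and smooth pasting, exactly as in \eqref{eq:FS}; the upper branch $V_C(P)-I$ for $P\geqslant P_{CR}^{\ast}$ is just the exercised payoff and needs no derivation. First I would write value matching, $F_{CR}(P_{CR}^{\ast})=V_C(P_{CR}^{\ast})-I$, which gives $K_1 {P_{CR}^{\ast}}^{\eta_1}=V_C(P_{CR}^{\ast})-I-J_1^{(\omega)}{P_{CR}^{\ast}}^{\beta_1}$ and hence $K_1$. Substituting this back into $K_1 P^{\eta_1}+J_1^{(\omega)} P^{\beta_1}$, after rewriting $P^{\eta_1}={P_{CR}^{\ast}}^{\eta_1}(P/P_{CR}^{\ast})^{\eta_1}$, produces precisely the first branch of \eqref{eq:option_floor_reg}. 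Next I would write smooth pasting, $F_{CR}'(P_{CR}^{\ast})=V_C'(P_{CR}^{\ast})$, multiply through by $P_{CR}^{\ast}$ to homogenise the powers, and then substitute the value-matching relation for $K_1{P_{CR}^{\ast}}^{\eta_1}$. The $K_1$ term is eliminated, leaving $\eta_1(V_C(P_{CR}^{\ast})-I-J_1^{(\omega)}{P_{CR}^{\ast}}^{\beta_1})+\beta_1 J_1^{(\omega)}{P_{CR}^{\ast}}^{\beta_1}=V_C'(P_{CR}^{\ast})P_{CR}^{\ast}$, which rearranges to the threshold equation \eqref{eq:trigger_collar_reg}.

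The routine content is the algebra of the two boundary conditions; the genuine obstacle is that \eqref{eq:trigger_collar_reg} is transcendental, because $V_C$ carries the normal-CDF terms of $S_C(P)$ from Proposition \ref{propVC}, so neither a closed-form solution nor a general existence-and-uniqueness argument is available. Consistent with the remark following Proposition \ref{prop3}, I would treat this as a numerical root-finding problem and verify admissibility of the computed $P_{CR}^{\ast}$ on the base-case parameter grid of Section \ref{sec:Comparative statics analysis Trigger}, rather than attempt a formal proof. As a closing check I would confirm that the resulting $P_{CR}^{\ast}$ lies in the expected ordering relative to the no-uncertainty threshold, so that the constructed $F_{CR}$ genuinely dominates the exercised payoff on the continuation region and the free-boundary split is the correct one.
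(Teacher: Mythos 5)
Your proposal is correct and follows essentially the same route the paper takes: the general solution $K_1 P^{\eta_1}+J_1^{(\omega)}P^{\beta_1}$ of \eqref{eq:ODE_MCollar} combined with value matching and smooth pasting at $P_{CR}^{\ast}$, which is exactly how the paper (implicitly, via the fit conditions stated at the end of Section \ref{sec:opt}) obtains \eqref{eq:option_floor_reg} and \eqref{eq:trigger_collar_reg}. Your added verifications — discarding the negative root by boundedness at $P\to 0$, checking the particular solution coefficient collapses to $-\lambda$, and justifying that the post-jump state remains in the continuation region — are all consistent with the paper's reasoning and merely make explicit what the paper leaves implicit.
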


\section{Other FITs}
\label{sec:otherFITs}

Using the results derived in the previous section, we show next how we can derive the value function and investment threshold for the minimum price guarantee and the fixed-price FIT. Since the fixed-premium FIT is not a special case of the collar regime, we present the derivations independently at the end of this section.

In Appendix \ref{app:proofs_relations}, we present the proofs that the minimum price guarantee and the fixed-price FIT are a special case of the sliding premium, letting $C \to \infty$ for the minimum price guarantee case and $C=F$ for the fixed-price FIT case. 

\subsection{Minimum price guarantee FIT}

A FIT with a minimum price guarantee \cite{Barbosa2018} uses a price-floor regime. In this scheme, an investor either (i)  receives a fixed amount when the market price is below the price floor, or (ii) sells energy for the market price when the market price is above the price floor. As mentioned above, the price-floor regime is a special case of the collar regime when $C$ goes to  $\infty$.

The instantaneous profit function of the renewable energy project with a minimum price guarantee is:
\begin{equation}
\label{PiM}
\Pi_M(P) = \text{Max}(P,F) Q 
\end{equation}
where $F$ is a minimum price guarantee due to a FIT contract, and $Q$ is the quantity of energy produced.

Following the same reasoning of the collar regime, the value of the project with a finite minimum guarantee is given by:
\begin{equation}
\label{eq:VALM}
V_{M} (P) = V_{MP}(P) - S_M(P) + \dfrac{P Q}{r-\mu}e^{-(r-\mu)T}
\end{equation}
where  $V_{MP}(P)$ is the value of the perpetual minimum price guarantee contract,  $S_M(P)$  is the value of the project with a delayed perpetual minimum price guarantee contract (i.e., a minimum price guarantee contract that starts in the future moment $T$), and $ \frac{P Q}{r-\mu}e^{-(r-\mu)T}$ is the perpetual discounted profit from selling at market prices.

As a consequence of Proposition (\ref{propVC}), letting $C \to \infty$, it follows that the value of the perpetual minimum price guarantee contract, $V_{MP}$, is given by:
\begin{equation}
\label{eq:ProjectValueM}
V_{MP}(P) =  
\begin{cases}
L_1 P^{\beta_1} + \dfrac{F Q }{r} & \textrm{ for }  P < F  \\
\ \\
M_2 P^{\beta_2} + \dfrac{P Q}{r - \mu} & \textrm{ for }  P \geqslant F  
\end{cases} 
\end{equation}
where 
\begin{align}
\label{eq:A1_perpetual}
L_1 &= \dfrac{Q F^{1-\beta_1}} {\beta_1-\beta_2}\left(\frac{\beta_2}{r}-\frac{\beta_2 - 1}{r-\mu}\right)  >0 \\
\label{eq:B2_perpetual}
M_2 &= \dfrac{Q F^{1-\beta_2}} {\beta_1-\beta_2}\left(\frac{\beta_1}{r}-\frac{\beta_1 - 1}{r-\mu}\right) >0 
\end{align}
with $L_1$ is derived from \eqref{eq:E1} letting $C=\infty$, and $M_2=G_2$ is derived from \eqref{eq:G2}.  See Proposition (\ref{prop:ThresholdCollarEqualPriceFloor}) of Appendix \ref{app:proofs_relations}.

Moreover, if in (\ref{eq:SC}) we let again $C=\infty$, and using the properties of the normal distribution, we conclude that 
\begin{align}
\label{eq:SCM}
S_M(P) = & L_1 P^{\beta_1}\Phi(-d_{\beta_1}(P,F))+\dfrac{FQ}{r}e^{-rT}\Phi(-d_{0}(P,F))\nonumber  \\ 
& +M_2P^{\beta_2}\Phi(d_{\beta_2}(P,F))+\dfrac{PQ}{r-\mu}e^{-(r-\mu)T}\Phi(d_1(P,F))
\end{align}

Finally, as a consequence of Proposition \eqref{prop3}, and in view of these quantities, the next result holds. 

\begin{proposition}
The value of the option to invest with a minimum price guarantee design and the investment threshold are:
\begin{equation}
\label{eq:optionvalueM}
F_{M} (P) =  
\begin{cases}
(V_{M}(P_M^{\ast}) - I)\left(\dfrac{P}{P_M^{\ast}}\right)^{\beta_1} & \textrm{ for } P < P_M^{\ast} \\ \\
V_{M}(P) - I & \textrm{ for } P \geqslant P_M^{\ast}\end{cases}
\end{equation}  
where $P_M^\ast$ is the solution of 
\begin{equation}
\label{eq:trigger_floor}
\beta_1 (V_C(P_M^{\ast}) - I) - \dfrac{\partial V_C( P_M^{\ast} )}{\partial P_M^{\ast}}P_M^{\ast} = 0
\end{equation}
which in this case leads to two equations presented in Appendix \ref{app:floor}, that need to be solved using numerical methods.
\end{proposition}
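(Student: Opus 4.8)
The plan is to obtain this result as a direct specialization of Proposition \ref{prop3}, exploiting the fact established above that the minimum price guarantee is the $C\to\infty$ limit of the collar. Since the general dynamic-programming solution \eqref{eq:FS}, together with the value-matching and smooth-pasting conditions, applies verbatim to any profit scheme $S$, I would simply instantiate it with $S=M$ and the project value $V_M$ given in \eqref{eq:VALM}. The structure of the option value \eqref{eq:optionvalueM} is then identical to \eqref{eq:optionvalueC}, with $V_C$ replaced by $V_M$ and $P_C^\ast$ by $P_M^\ast$.

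Concretely, I would first write $F_M$ in the two-branch form of \eqref{eq:FS}: $F_M(P)=A_1P^{\beta_1}$ on the waiting region $P<P_M^\ast$ and $F_M(P)=V_M(P)-I$ on $P\geqslant P_M^\ast$, where $\beta_1>1$ is the positive root and the boundary condition $F_M(0^+)=0$ forces the coefficient of the $P^{\beta_2}$ term (with $\beta_2<0$) to vanish. Next I would impose value matching $A_1(P_M^\ast)^{\beta_1}=V_M(P_M^\ast)-I$ and smooth pasting $\beta_1A_1(P_M^\ast)^{\beta_1-1}=V_M'(P_M^\ast)$. Eliminating $A_1$ between the two gives $\beta_1(V_M(P_M^\ast)-I)=V_M'(P_M^\ast)P_M^\ast$, which is exactly \eqref{eq:trigger_floor} (here $V_M$ coincides with the $C\to\infty$ limit of $V_C$). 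Solving for $A_1$ from value matching and substituting back then recovers the first branch $(V_M(P_M^\ast)-I)(P/P_M^\ast)^{\beta_1}$ of \eqref{eq:optionvalueM}.

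To reach the two explicit equations reported in Appendix \ref{app:floor}, I would differentiate $V_M$ using \eqref{eq:VALM}, \eqref{eq:ProjectValueM} and \eqref{eq:SCM}. Because $V_{MP}$ is piecewise on $\{P<F\}$ and $\{P\geqslant F\}$, substituting the relevant branch into \eqref{eq:trigger_floor} produces one equation when $P_M^\ast<F$ and another when $P_M^\ast\geqslant F$; this is precisely the reduction of the collar's three regions to two, the $P\geqslant C$ branch disappearing as $C\to\infty$. Computing $V_M'$ is tedious but routine once $d_{\beta}(P,F)$ from \eqref{eq:d_beta} is differentiated: each $\Phi(\cdot)$ and each power $P^{\beta_i}$ contributes, though many terms arising from the chain rule on the $\Phi$ arguments cancel, as is standard for these first-passage decompositions.

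The step I expect to require the most genuine care is justifying that the $C\to\infty$ limit preserves the continuous differentiability of the project value, so that smooth pasting may be legitimately applied to $V_M$; this is inherited from $V_C$ because the coefficients \eqref{eq:A1_perpetual}--\eqref{eq:B2_perpetual} are finite limits of \eqref{eq:E1}--\eqref{eq:M2} and the vanishing branch contributes nothing in the limit. Beyond that, the one point I cannot expect to settle analytically is the existence and uniqueness of the root $P_M^\ast$ of \eqref{eq:trigger_floor}: as the authors already concede for the collar, there is no closed-form monotonicity argument available, so I would instead verify numerically the sign behaviour of the left-hand side of \eqref{eq:trigger_floor} over the parameter range of Section \ref{sec:Comparative statics analysis Trigger}, rather than claim a formal proof.
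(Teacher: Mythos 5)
Your proposal is correct and follows essentially the same route as the paper: the authors obtain this result as a direct consequence of Proposition \ref{prop3} by letting $C \to \infty$ (using the limits of $E_1$, $G_1$, $G_2$ computed in Appendix \ref{app:proofs_relations} to get $V_{MP}$ and $S_M$), which is exactly your specialization of the general value-matching/smooth-pasting solution \eqref{eq:FS} with the collar's three price regions collapsing to two. Your reading of $V_C$ in \eqref{eq:trigger_floor} as the $C\to\infty$ limit coinciding with $V_M$ is the intended one, and your caveat on existence and uniqueness of the root matches the authors' own admission that only numerical verification is available.
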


We note that in this case the investment threshold $P_M^\ast$ is the solution of one of the two equations of \eqref{eq:trigger_floor}, depending on the relative ordering of $P_M^\ast$ and $F$. This is a direct consequence of \eqref{PiM}, as the instantaneous profit function depends also on the relative ordering of the price $P$ and the minimum price guarantee $F$, which impacts of the expression for the value of the project, as shown in \eqref{eq:ProjectValueM}. Therefore, when one is deriving the investment threshold $P_M^\ast$, we have two possible cases for the value matching and smoothing pasting conditions, depending on what instance of the value of the project we are using.

In a similar way to what we have considered in the previous sections, we also consider the case that the minimum price guarantee $F$ can be affected by a policy change before the project starts. As the reasoning is similar to the one presented previously, we omit further comments and present the results for a scenario with a finite minimum price guarantee contract and regulatory uncertainty. 

\begin{proposition}
The value of the option with a minimum price guarantee and regulatory uncertainty is given by:\footnote{The general option value function is $R_1 P^{\eta_1} + N_1^{(\omega)}P^{\beta_1}$.}
\begin{equation}
\label{eq:option_floor_reg}
F_{MR} (P) =  
\begin{cases}
(V_{M}(P_{MR}^{\ast}) - I - N_1^{(\omega)} {P_{MR}^\ast}^{\beta_1})\left(\dfrac{P}{P_{MR}^{\ast}}\right)^{\eta_{1}} + N_1^{(\omega)} P^{\beta_1}   & \textrm{ for } P < P_{MR}^{\ast}  \\
\ \\
V_M (P) - I & \textrm{ for } P \geqslant P_{MR}^{\ast}   
\end{cases}
\end{equation}
where
\begin{equation}
\label{eq:N1_omega}
N_1^{(\omega)} = \left(V_{M}^{(\omega)}\left(P_M^{\ast (\omega)} \right) - I \right)\left(\dfrac{1}{P_M^{\ast (\omega)}}\right)^{\beta_1}
\end{equation}

The investment threshold $P_{MR}^{\ast}$ is the solution of the following equation, that must be solved numerically:
\begin{equation}
\label{eq:trigger_floor_reg}
\eta_1(V_M(P_{MR}^{\ast}) - I - N_1^{(\omega)} {P_{MR}^\ast}^{\beta_1}) + \beta_1 N_1^{(\omega)} {P_{MR}^\ast}^{\beta_1} - \dfrac{\partial V_M(P_{MR}^{\ast} )}{\partial P_{MR}^{\ast}}P_{MR}^{\ast} = 0
\end{equation}
This equation results in three equations that are presented in Appendix \ref{app:floor}.
\end{proposition}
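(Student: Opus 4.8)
The plan is to mirror the argument already carried out for the collar regime under regulatory uncertainty, since the minimum price guarantee is recovered from the sliding premium by letting $C\to\infty$. First I would specialise the general equation \eqref{eq:ODE_fixed_ru2} to the present scheme. In the continuation region the value $F_{MR}$ must solve
\begin{equation*}
\mu P \dfrac{\partial F_{MR}(P)}{\partial P} + 0.5\sigma^2 P^2 \dfrac{\partial^2 F_{MR}(P)}{\partial P^2} - r F_{MR}(P) + \lambda\left[F^{(\omega)}_M(P) - F_{MR}(P)\right] = 0,
\end{equation*}
so the first task is to pin down the forcing term $F^{(\omega)}_M(P)$. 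By the monotonicity argument already given in Section \ref{sec:opt} (scaling the tariff down by $\omega\le 1$ raises the threshold), any price at which the firm is still waiting under regulatory uncertainty also lies in the continuation region of the already-reduced scheme; there the exercise branch of \eqref{eq:optionvalueM} never activates, so $F^{(\omega)}_M(P)=N_1^{(\omega)}P^{\beta_1}$ with $N_1^{(\omega)}$ as in \eqref{eq:N1_omega}. This reduces the equation to the exact analogue of \eqref{eq:ODE_MCollar} with $J_1^{(\omega)}$ replaced by $N_1^{(\omega)}$.

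Next I would solve this linear ODE. Its homogeneous solutions are the powers $P^{\eta}$ with $\eta$ a root of \eqref{eq:etas}; the boundary condition $F_{MR}(P)\to 0$ as $P\to 0^{+}$ discards the negative root and keeps only $P^{\eta_1}$, with $\eta_1$ given by \eqref{eq:quadratic_eta1}. For the particular solution I would try $cP^{\beta_1}$: substituting and using that $\beta_1$ from \eqref{eq:quadratic_beta1} solves $0.5\sigma^2\beta_1(\beta_1-1)+\mu\beta_1-r=0$, the bracket collapses to $-\lambda c + \lambda N_1^{(\omega)}$, forcing $c=N_1^{(\omega)}$. Hence the general solution in the continuation region is $R_1P^{\eta_1}+N_1^{(\omega)}P^{\beta_1}$, exactly as recorded in the footnote to the statement.

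It then remains to fix $R_1$ and the threshold $P_{MR}^{\ast}$ through the value-matching and smooth-pasting conditions at $P=P_{MR}^{\ast}$. Value matching, $R_1{P_{MR}^{\ast}}^{\eta_1}+N_1^{(\omega)}{P_{MR}^{\ast}}^{\beta_1}=V_M(P_{MR}^{\ast})-I$, solves for $R_1$; substituting it back and collecting the $\eta_1$-power yields the first branch of \eqref{eq:option_floor_reg}. Smooth pasting, $F_{MR}'(P_{MR}^{\ast})=V_M'(P_{MR}^{\ast})$, multiplied through by $P_{MR}^{\ast}$ and with the quantity $R_1{P_{MR}^{\ast}}^{\eta_1}$ eliminated via the value-matching relation, produces precisely \eqref{eq:trigger_floor_reg}.

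The routine algebra is not the difficulty. The two points that genuinely need care are, first, the justification that the forcing term takes the clean form $N_1^{(\omega)}P^{\beta_1}$ over the whole continuation region, which rests on the ordering $P_{MR}^{\ast}\le P_M^{\ast(\omega)}$ and hence on the monotonicity of the threshold in the tariff; and second, the case analysis forced by the kink of $V_M$ at $P=F$ (and of $V_M^{(\omega)}$ at $\omega F$, which enters through $N_1^{(\omega)}$), which is why the single condition \eqref{eq:trigger_floor_reg} unfolds into the several branch-dependent equations of Appendix \ref{app:floor}. Existence and uniqueness of the admissible root $P_{MR}^{\ast}$ are, as the paper already concedes for the other regimes, only verified numerically rather than established analytically.
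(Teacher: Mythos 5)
Your proposal is correct and follows essentially the same route as the paper: the paper omits an explicit proof for this proposition, stating that the reasoning is identical to the collar case, and that reasoning is precisely what you reconstruct — specialise \eqref{eq:ODE_fixed_ru2}, justify the forcing term $N_1^{(\omega)}P^{\beta_1}$ via the threshold monotonicity in the tariff, solve the ODE with homogeneous part $R_1P^{\eta_1}$ and particular solution $N_1^{(\omega)}P^{\beta_1}$, and apply value matching and smooth pasting. Your closing remarks on the kink of $V_M$ at $F$ generating the branch-dependent equations and on the purely numerical verification of the root match the paper's own caveats.
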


Given the relation between the sliding premium and the minimum price, one has the following dominance result. The proof can be found in Appendix \ref{app:proofs_relations}.

\begin{proposition}
\label{prop:VMPandVCP1}
The investment threshold of the project with a sliding premium  contract is always smaller than the investment threshold of the project with a minimum price guarantee contract.
\end{proposition}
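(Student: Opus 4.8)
The plan is to characterize each investment threshold as the maximizer of a normalized net payoff and then compare the two first-order conditions. Writing $\Psi_S(P):=(V_S(P)-I)/P^{\beta_1}$ for $S\in\{C,M\}$, the value-matching and smooth-pasting conditions \eqref{eq:trigger_collar} and \eqref{eq:trigger_floor} say exactly that $P_S^{\ast}$ maximizes $\Psi_S$, equivalently that it is the root of $g_S(P):=\beta_1\bigl(V_S(P)-I\bigr)-P\,V_S'(P)$. Since $g_S(P)=-P^{\beta_1+1}\Psi_S'(P)$ and $\Psi_S$ rises then falls, $g_S$ is negative to the left of $P_S^{\ast}$ and positive to the right. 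Hence it suffices to prove the single inequality $g_M(P_C^{\ast})<0$, which places $P_C^{\ast}$ strictly to the left of the sign change of $g_M$, i.e. $P_C^{\ast}<P_M^{\ast}$.

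First I would reduce $g_M(P_C^{\ast})$ to a statement about the payoff gap $\Delta(P):=V_M(P)-V_C(P)$. Because $V_M=V_C+\Delta$ and $g_C(P_C^{\ast})=0$, subtracting the two definitions gives
\[
g_M(P_C^{\ast})=\beta_1\,\Delta(P_C^{\ast})-P_C^{\ast}\,\Delta'(P_C^{\ast})
=-\,(P_C^{\ast})^{\beta_1+1}\,\frac{d}{dP}\!\left(\frac{\Delta(P)}{P^{\beta_1}}\right)\Bigg|_{P=P_C^{\ast}}.
\]
Thus the whole proposition collapses to showing that $\Delta(P)/P^{\beta_1}$ is strictly increasing at $P_C^{\ast}$, i.e. that the gap between the two project values has elasticity exceeding $\beta_1$ there.

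To control $\Delta$ I would use its financial meaning. Since $\Pi_M(P)-\Pi_C(P)=Q\,(P-C)^{+}$, the gap is the value of a finite strip of European calls on $P$ struck at $C$ over the subsidy window of length $T$; in particular $\Delta\ge 0$ and $\Delta\to 0$ as $P\to 0$. The key lever is the identity $E\!\left[e^{-rT}P_T^{\beta_1}\mid P_0=P\right]=P^{\beta_1}$, which holds precisely because $\beta_1$ solves the fundamental quadratic \eqref{eq:quadratic_beta1} with discount rate $r$. Using \eqref{eq:ProjectValueC}--\eqref{eq:ProjectValueM} together with $M_2=G_2$, the perpetual gap $V_{MP}-V_{CP}$ equals $-G_1P^{\beta_1}$ for $P<C$, so by the tower property over the delay $T$ one can write $\Delta=\rho-R$, where $R\ge 0$ is the overshoot $-G_1P^{\beta_1}-(V_{MP}-V_{CP})$ supported on $[C,\infty)$ and $\rho(P)=E[e^{-rT}R(P_T)\mid P_0=P]$. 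When $P_C^{\ast}<C$ we have $R(P_C^{\ast})=0$, so $\Delta/P^{\beta_1}=\rho/P^{\beta_1}$ there; tilting by the unit-mass martingale $e^{-rt}(P_t/P)^{\beta_1}$ (under which $P_t$ is stochastically increasing in $P$) gives $\rho(P)/P^{\beta_1}=\tilde E[\phi(P_T)]$ with $\phi(x)=R(x)/x^{\beta_1}$ non-decreasing, hence $\rho/P^{\beta_1}$ strictly increasing and $g_M(P_C^{\ast})<0$, as required.

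The hard part will be the case $P_C^{\ast}\ge C$, where $R(P_C^{\ast})>0$ and one can no longer simply drop that term: the sign of $g_M(P_C^{\ast})$ then turns on whether $\rho/P^{\beta_1}$ grows faster than $R/P^{\beta_1}$. This is genuinely delicate because the per-call elasticity $\psi(y)=(y-C)^{+}/y^{\beta_1}$ is only hump-shaped, rising on $(C,\,C\beta_1/(\beta_1-1))$ and falling thereafter, so the monotonicity of $\Delta/P^{\beta_1}$ is not automatic and must be extracted from the smoothing effect of the delay $T$. Establishing this elasticity inequality cleanly in full generality --- or, alternatively, grinding out $g_M(P_C^{\ast})<0$ directly from the three explicit branch equations of Appendix \ref{app:collar} and Appendix \ref{app:floor} --- is where the real work lies; by contrast, the sign-change reduction in the first step is the part that makes the comparison tractable.
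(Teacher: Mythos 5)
Your reduction is sound and in fact attacks the stated claim more directly than the paper does: the paper's own argument only establishes the value dominance $V_{MP}(P)\geqslant V_{CP}(P)$ (hence $V_M(P)\geqslant V_C(P)$) by comparing the coefficients $L_1,E_1,G_1,G_2,M_2,H_2$ branch by branch, and never bridges from that inequality to the ordering of the thresholds --- note that value dominance alone would, if anything, suggest the opposite ordering, since a more valuable project is normally triggered earlier. Your observation that everything hinges on the sign of $g_M(P_C^{\ast})=\beta_1\Delta(P_C^{\ast})-P_C^{\ast}\Delta'(P_C^{\ast})$, i.e.\ on whether the elasticity of the gap $\Delta=V_M-V_C$ exceeds $\beta_1$ at $P_C^{\ast}$, correctly isolates what actually drives $P_C^{\ast}<P_M^{\ast}$ (the cap destroys option value faster than intrinsic value), and the identity $\Delta=\rho-R$ via the martingale $e^{-rt}(P_t/P)^{\beta_1}$ is a legitimate use of the structure of \eqref{eq:ProjectValueC}--\eqref{eq:ProjectValueM}.

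However, the proposal has two genuine gaps. First, the case $P_C^{\ast}\geqslant C$, which you explicitly leave open, is not a removable corner case: for a tight cap the trigger can exceed $C$ (the three-branch systems in Appendix \ref{app:collar} exist precisely because all orderings of $P_C^{\ast}$, $F$ and $C$ occur), and there your argument yields no sign for $g_M(P_C^{\ast})$ because, as you note, the per-call elasticity is hump-shaped and works against you; the proposition claims the ordering \emph{always} holds, so this case cannot be omitted. Second, the step ``$g_M$ is negative to the left of $P_M^{\ast}$ and positive to the right'' presumes that $(V_M(P)-I)/P^{\beta_1}$ is single-peaked, i.e.\ that \eqref{eq:trigger_floor} has a unique admissible root; the paper explicitly concedes it cannot prove existence or uniqueness of these roots and relies on numerics, so this single-crossing property would itself need to be established before $g_M(P_C^{\ast})<0$ implies $P_C^{\ast}<P_M^{\ast}$. (The auxiliary claims that $R\geqslant 0$ and that $R(x)/x^{\beta_1}$ is non-decreasing are also asserted rather than proved, though they look tractable.) As it stands the proposal is a promising but incomplete programme rather than a proof --- which, to be fair, is also an accurate description of the paper's own argument for this proposition.
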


\subsection{Fixed-price FIT}
\label{sec:Fixed FIT}

In this regime, the renewable energy producer has a fixed-tariff contract, where the firm receives $F$ for a period of time $T$. The fixed-price scheme is a special case of the collar regime when $C$ equal to $F$. Among all FIT designs, the fixed-price FIT is the most widely used FIT scheme around the world. For instance, fixed-price FITs have been offered in Germany, France, Portugal, Canada, and among other countries \cite{Couture10b}. 

The value of the project has a fixed-tariff contract with a finite duration $T$. After time $T$, the renewable energy producer sells the energy for the market price $P$.  Hence, after investment, the firm gets the following expected value in \eqref{eq:VF}:
\begin{equation}
\label{eq:VFfixed}
V_F(P)=E\left[\left. \int_{0}^{T} F Qe^{-rt}dt + \int_{T}^{+\infty} P_t Qe^{-rt}dt \right| P_0=P\right] = \dfrac{F Q }{ r}\left(1-e^{-rT}\right) +  \dfrac{P Q}{r - \mu}e^{-(r - \mu)T}.
\end{equation}
In Proposition \eqref{prop:ThresholdCollarEqualFixedPrice} of Appendix \ref{app:proofs_relations} we present a formal proof of such result, using the results from the sliding premium.

Therefore, in view of Proposition \eqref{prop3}, with $F=C$, it follows that
\begin{proposition}
\label{prop:OptionFixed}
The value of the investment option with a fixed price design is given by:\footnote{The general option value function is $S_1 P^{\beta_1}$.}
\begin{equation}
\label{eq:OptionFixed}
F_F (P) =  
\begin{cases}
\left(V_F(P_F^{\ast}) - I  \right) \left(\dfrac{P}{P_F^{\ast}}\right)^{\beta_1} & \textrm{ for }   P < P_F^{\ast} \\ \\
V_F(P) - I  & \textrm{ for }   P \geqslant P_F^{\ast}\end{cases}
\end{equation} 
where the investment threshold $P_F^{\ast}$ is equal to:
\begin{equation}
\label{eq:triggerfixed}
P_F^{\ast} = \dfrac{\beta_1}{(\beta_1-1)} \dfrac{r - \mu}{Q e^{-(r - \mu)T} } \left(I-\dfrac{F Q }{ r}\left(1 - e^{-rT}\right) \right).
\end{equation}
\end{proposition}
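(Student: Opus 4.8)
The plan is to specialize the general value-function template (\ref{eq:FS}), together with the value-matching and smooth-pasting conditions stated just after (\ref{eq:quadratic_beta1}), to the affine payoff $V_F$ in (\ref{eq:VFfixed}). Since the fixed-price scheme is the collar with $C=F$ (Proposition \ref{prop:ThresholdCollarEqualFixedPrice}), one could instead collapse the general trigger equation (\ref{eq:trigger_collar}); but the direct route is shorter, because $V_F$ is linear in $P$ and hence the three-branch structure of the collar degenerates to a single branch.

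First I would write the continuation-region value as $A_1 P^{\beta_1}$ and impose value matching at $P_F^{\ast}$, namely $A_1 {P_F^{\ast}}^{\beta_1} = V_F(P_F^{\ast}) - I$. Solving for $A_1$ and substituting back immediately yields $F_F(P) = (V_F(P_F^{\ast})-I)(P/P_F^{\ast})^{\beta_1}$ on $P<P_F^{\ast}$, which is the first line of (\ref{eq:OptionFixed}); the second line is the stopping payoff by definition. This disposes of the value-function claim and leaves only the threshold formula.

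Next I would impose smooth pasting, $\beta_1 A_1 {P_F^{\ast}}^{\beta_1-1} = V_F'(P_F^{\ast})$, and use value matching to eliminate $A_1$, turning it into $\beta_1 (V_F(P_F^{\ast})-I) = P_F^{\ast}\, V_F'(P_F^{\ast})$. The key simplification is that $V_F$ is affine: from (\ref{eq:VFfixed}), $V_F'(P) = \frac{Q}{r-\mu}e^{-(r-\mu)T}$ is constant, so substituting $V_F(P_F^{\ast}) = \frac{FQ}{r}(1-e^{-rT}) + \frac{P_F^{\ast} Q}{r-\mu}e^{-(r-\mu)T}$ collapses the condition to a \emph{linear} equation in $P_F^{\ast}$. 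Collecting the terms proportional to $P_F^{\ast}$ produces $(\beta_1-1)\frac{Q}{r-\mu}e^{-(r-\mu)T}\, P_F^{\ast} = \beta_1\bigl(I - \frac{FQ}{r}(1-e^{-rT})\bigr)$, and dividing gives exactly (\ref{eq:triggerfixed}).

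The derivation is entirely routine, and there is no genuine obstacle precisely because the affine structure of $V_F$ removes the $P^{\beta_2}$ and branch-dependent terms that force the collar and minimum-price thresholds to be solved numerically. The only point worth recording is admissibility: since $\beta_1>1$ and $r>\mu$ make the prefactor $\frac{\beta_1}{\beta_1-1}\frac{r-\mu}{Qe^{-(r-\mu)T}}$ strictly positive, one has $P_F^{\ast}>0$ exactly when $I > \frac{FQ}{r}(1-e^{-rT})$, i.e.\ the sunk cost exceeds the present value of the subsidized revenue stream; otherwise $V_F(0)-I>0$ and the firm invests immediately, so the boundary degenerates to $P_F^{\ast}=0$.
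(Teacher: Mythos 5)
Your derivation is correct, and the computation it rests on is the same one the paper relies on; the only difference is the entry point. The paper obtains Proposition \ref{prop:OptionFixed} formally as the $C=F$ specialization of the collar: it invokes Proposition \ref{prop3} and the appendix result (Proposition \ref{prop:ThresholdCollarEqualFixedPrice}) showing that $V_C(P)=V_F(P)$ when the cap equals the floor, so that the collar trigger equation $\beta_1(V_C(P^{\ast})-I)-V_C'(P^{\ast})P^{\ast}=0$ collapses to the fixed-price case. You instead apply the general value-matching and smooth-pasting template of \eqref{eq:FS} directly to the affine payoff \eqref{eq:VFfixed}. Both routes reduce to the identical linear equation $(\beta_1-1)\frac{Q}{r-\mu}e^{-(r-\mu)T}P_F^{\ast}=\beta_1\bigl(I-\frac{FQ}{r}(1-e^{-rT})\bigr)$, so nothing substantive is gained or lost; your route is marginally more self-contained since it does not require first verifying the degenerate-collar identity, while the paper's route emphasizes the unifying role of the sliding-premium scheme. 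Your closing admissibility remark --- that $P_F^{\ast}>0$ requires $I>\frac{FQ}{r}(1-e^{-rT})$, with immediate investment otherwise --- is correct and is not made explicit in the paper.
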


We note that in this case, contrary to the previous cases, we are able to find a closed expression for the investment threshold.

If regulatory uncertainty is present, then it follows from the previous results that the value function and investment threshold are given as follows.

\begin{proposition}
\label{prop:OptionFixedR}
The value of the investment option with a fixed price scheme and regulatory uncertainty is given by:\footnote{The general option value function is $U_1 P^{\eta_1} + S_1^{(\omega)}P^{\beta_1}$.}
\begin{equation}
\label{eq:OptionFixedR}
F_{FR} (P) =  
\begin{cases}
\left(V_F(P_{FR}^{\ast}) - I -S_1^{(\omega)}{P^\ast_{FR}}^{\beta_1}  \right) \left(\dfrac{P}{P_{FR}^{\ast}}\right)^{\eta_{1}}+ S_1^{(\omega)}P^{\beta_1} & \textrm{ for }   P < P_{FR}^{\ast} \\ \\
V_F(P) - I  & \textrm{ for }   P \geqslant P_{FR}^{\ast}
\end{cases}
\end{equation} 
where the investment threshold $P_{FR}^{\ast}$ is the solution of the following equation, which needs to be solved using numerical methods:
\begin{equation}
\label{eq:trigger_fixed}
-(\eta_1-\beta_1 )S_1^{(\omega)}{P^\ast_{FR}}^{\beta_1} + (\eta_1-1)\dfrac{P_{FR}^{\ast} Q}{r - \mu}e^{-(r-\mu)T}+\eta_1\left( \dfrac{F Q}{r}\left(1-e^{-rT}\right)-I\right) = 0.
\end{equation}
\end{proposition}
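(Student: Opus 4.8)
The plan is to specialize the general regulatory-uncertainty construction of Section~\ref{sec:opt} to the fixed-price profit flow $\Pi_F(P,F)=FQ$, exactly as the collar and minimum-price cases were handled. First I would record the two ingredients that feed into the ODE. The post-jump value is the option value under the reduced tariff $\omega F$; by the argument already given after \eqref{eq:ODE_fixed_ru2} (namely that $\omega\le 1$ pushes the reduced-tariff trigger strictly above the one for $F$, so the firm is still waiting whenever it is waiting under $F$), this value is purely the waiting branch $F^{(\omega)}_F(P)=S_1^{(\omega)}P^{\beta_1}$, with $S_1^{(\omega)}=\left(V_F^{(\omega)}(P_F^{\ast(\omega)})-I\right)\left(1/P_F^{\ast(\omega)}\right)^{\beta_1}$ the coefficient obtained from the first branch of \eqref{eq:OptionFixed} upon replacing $F$ by $\omega F$. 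The pre-jump project value is the same $V_F$ as in \eqref{eq:VFfixed}, since by assumption the contract is frozen once investment occurs.

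Next I would solve the continuation-region ODE. Substituting $A_1^{(\omega)}=S_1^{(\omega)}$ into \eqref{eq:ODE_fixed_ru2} gives a linear second-order equation whose bounded (at $P=0$) general solution is, by \eqref{eq:Ffixed_RU}, $F_{FR}(P)=U_1P^{\eta_1}+S_1^{(\omega)}P^{\beta_1}$. The only genuine verification here is that the inhomogeneous term forces the particular-solution coefficient to be exactly $S_1^{(\omega)}$: plugging $cP^{\beta_1}$ into the operator and using that $\beta_1$ solves $0.5\sigma^2\beta_1(\beta_1-1)+\mu\beta_1-r=0$ collapses the homogeneous part so its coefficient becomes $-\lambda$, whence $-\lambda c+\lambda S_1^{(\omega)}=0$ yields $c=S_1^{(\omega)}$; the $P^{\eta_1}$ factor is the positive root of \eqref{eq:etas}, with the negative root discarded to keep $F_{FR}$ finite as $P\to 0$.

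Then I would impose value matching $F_{FR}(P_{FR}^{\ast})=V_F(P_{FR}^{\ast})-I$ and smooth pasting $F_{FR}'(P_{FR}^{\ast})=V_F'(P_{FR}^{\ast})$ at the free boundary. Value matching solved for $U_1$ gives $U_1{P_{FR}^{\ast}}^{\eta_1}=V_F(P_{FR}^{\ast})-I-S_1^{(\omega)}{P_{FR}^{\ast}}^{\beta_1}$, which reproduces the first branch of \eqref{eq:OptionFixedR}. Eliminating $U_1$ between the two conditions (multiply smooth pasting by $P_{FR}^{\ast}$ and substitute) yields the generic trigger equation $\eta_1\big(V_F(P_{FR}^{\ast})-I-S_1^{(\omega)}{P_{FR}^{\ast}}^{\beta_1}\big)+\beta_1 S_1^{(\omega)}{P_{FR}^{\ast}}^{\beta_1}-V_F'(P_{FR}^{\ast})P_{FR}^{\ast}=0$, identical in form to \eqref{eq:trigger_collar_reg} and \eqref{eq:trigger_floor_reg}. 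Finally I would substitute the closed forms $V_F(P)=\tfrac{FQ}{r}(1-e^{-rT})+\tfrac{PQ}{r-\mu}e^{-(r-\mu)T}$ and $V_F'(P)=\tfrac{Q}{r-\mu}e^{-(r-\mu)T}$, collecting the $S_1^{(\omega)}{P_{FR}^{\ast}}^{\beta_1}$ terms into the coefficient $-(\eta_1-\beta_1)$ and the $\tfrac{PQ}{r-\mu}e^{-(r-\mu)T}$ terms into $(\eta_1-1)$, to land on \eqref{eq:trigger_fixed}.

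The proof is essentially a template instantiation and has no deep obstacle; the only points needing care are (i) justifying that the post-jump value reduces to the single power $S_1^{(\omega)}P^{\beta_1}$ rather than a two-branch object, which rides on the monotonicity-in-$\omega$ argument already established, and (ii) the bookkeeping of the quadratic identity for $\beta_1$ that makes the particular-solution coefficient come out to precisely $S_1^{(\omega)}$. Unlike the collar and minimum-price cases, $V_F$ is smooth everywhere, so there is no need to split into sub-cases according to the ordering of $P_{FR}^{\ast}$ and $F$, which is why a single trigger equation (rather than several) suffices.
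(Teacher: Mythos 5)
Your proposal is correct and follows exactly the route the paper intends: it states this proposition as a "straightforward application of the results of Section \ref{sec:opt}," i.e.\ instantiating the general form \eqref{eq:FSR} with $A_1^{(\omega)}=S_1^{(\omega)}$ and then applying value matching and smooth pasting, which is precisely your template instantiation, and your algebra collecting the $S_1^{(\omega)}{P^\ast_{FR}}^{\beta_1}$ and $\tfrac{P^\ast_{FR}Q}{r-\mu}e^{-(r-\mu)T}$ terms into the coefficients $-(\eta_1-\beta_1)$ and $(\eta_1-1)$ checks out. Your two flagged points of care (the post-jump value collapsing to the single power $S_1^{(\omega)}P^{\beta_1}$, and the quadratic identity for $\beta_1$ forcing the particular-solution coefficient) are exactly the right ones, and the closing observation that the smoothness of $V_F$ removes the case split present in the collar and floor regimes is accurate.
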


\subsection{Fixed-premium FIT}
\label{sec:Premium FIT}

Next, we analyze a fixed-premium contract, whereby the firm receives a bonus $F$ over the market price. Fixed-premium FIT policies have been offered in countries such as Spain, the Czech Republic and the Netherlands \cite{Couture10b}. In fact, Spain and  the Czech Republic let investors choose between a fixed-price FIT and a fixed-premium FIT. 

For this contract, the profit function is given as follows:

\begin{equation}
\Pi_P(P) = (P + F) Q 
\end{equation}
where $F$ is a premium over the electricity market price. It follows that this regime cannot be seen as a special case of a sliding premium with cap and floor.

If the firm invests with a current price $P$, the firm gets the following expected value:
\begin{equation}
\label{eq:}
V_P (P) = E\left[\left. \int_{0}^{T} ( P + F )Q e^{-rt}dt + \int_{T}^{+\infty} P Qe^{-rt}dt \right| P_0=P\right]
 = \dfrac{P Q }{r - \mu}+\dfrac{F Q}{r}\left(1-e^{-rT}\right).
\end{equation}
Using the results in Section \ref{sec:opt}, we obtain the following value of the investment option for this type of contract.

\begin{proposition}
The value of the investment option when the producer has a fixed-premium FIT contract is given by:\footnote{The general option value function is $V_1 P^{\beta_1}$.}
\begin{equation}
\label{eq:optionP}
F_P (P) =  
\begin{cases}
\left(V_P(P_P^{\ast}) - I  \right) \left(\dfrac{P}{P_P^{\ast}}\right)^{\beta_1} & \textrm{ for }   P < P_P^{\ast} \\ \\
V_P(P) - I  & \textrm{ for }   P \geqslant P_P^{\ast}\end{cases}
\end{equation} 
where the investment threshold $P_P^{\ast}$ is given by:
\begin{equation}
\label{eq:triggerP}
P_P^{\ast} = \dfrac{\beta_1}{\beta_1 - 1} \dfrac{r - \mu}{Q} \left(I-\dfrac{F Q }{ r}\left(1-e^{-rT}\right)\right).
\end{equation}
\end{proposition}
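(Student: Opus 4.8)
The plan is to follow the standard real options recipe laid out in Section~\ref{sec:opt}, specialized to the profit flow $\Pi_P(P)=(P+F)Q$. Since the post-investment payoff $V_P$ has already been computed, the only remaining tasks are to determine the option value in the waiting region and to pin down the threshold $P_P^{\ast}$ via the value-matching and smooth-pasting conditions.

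First I would write down the ODE satisfied by $F_P$ in the continuation region $\{P<P_P^{\ast}\}$, namely $\mu P F_P'(P)+0.5\sigma^2 P^2 F_P''(P)-rF_P(P)=0$. Its general solution is $V_1 P^{\beta_1}+V_2 P^{\beta_2}$, with $\beta_1,\beta_2$ the roots in \eqref{eq:quadratic_beta1} and \eqref{eq:quadratic_beta2}. The boundary condition $F_P(P)\to 0$ as $P\downarrow 0$ kills the term carrying the negative exponent $\beta_2<0$, leaving $F_P(P)=V_1 P^{\beta_1}$, which is exactly the first branch of \eqref{eq:optionP} once $V_1$ is identified.

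Next I would impose the value-matching condition $V_1 {(P_P^{\ast})}^{\beta_1}=V_P(P_P^{\ast})-I$ and the smooth-pasting condition $\beta_1 V_1 {(P_P^{\ast})}^{\beta_1-1}=V_P'(P_P^{\ast})$ at the free boundary. Solving value matching for $V_1$ immediately recovers the prefactor $\bigl(V_P(P_P^{\ast})-I\bigr){(P/P_P^{\ast})}^{\beta_1}$ of the stated solution, while eliminating $V_1$ between the two conditions yields the single indifference equation $\beta_1\bigl(V_P(P_P^{\ast})-I\bigr)=P_P^{\ast}\,V_P'(P_P^{\ast})$.

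The step that makes a closed form possible here — and this is the point worth highlighting, in contrast to the collar and minimum-price regimes where only a numerical root exists — is that $V_P$ is \emph{affine} in $P$: the premium term $\tfrac{FQ}{r}(1-e^{-rT})$ does not depend on $P$, so $V_P'(P)=\tfrac{Q}{r-\mu}$ is constant. Substituting this into the indifference equation turns it into a linear equation in $P_P^{\ast}$, and isolating the threshold gives $P_P^{\ast}=\tfrac{\beta_1}{\beta_1-1}\tfrac{r-\mu}{Q}\bigl(I-\tfrac{FQ}{r}(1-e^{-rT})\bigr)$, as claimed. There is thus no genuine obstacle in this case; the only care needed is the admissibility check that $\beta_1>1$ (guaranteed by \eqref{eq:quadratic_beta1}) makes the factor $\beta_1/(\beta_1-1)$ positive and finite, and that $I>\tfrac{FQ}{r}(1-e^{-rT})$ so that $P_P^{\ast}>0$ and the waiting region is nontrivial.
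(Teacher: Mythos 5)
Your proposal is correct and follows essentially the same route as the paper, which derives this result as a direct application of the general machinery of Section~3 (the homogeneous ODE in the continuation region, the $V_1P^{\beta_1}$ form after discarding the $\beta_2$ branch, and the value-matching and smooth-pasting conditions); your observation that the affine form of $V_P$ — with $V_P'(P)=Q/(r-\mu)$ constant — is what yields a closed-form threshold is exactly the point the paper makes when it contrasts this case with the collar and minimum-price regimes. The added admissibility remark that $I>\tfrac{FQ}{r}\left(1-e^{-rT}\right)$ is needed for $P_P^{\ast}>0$ is a sensible check the paper leaves implicit.
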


If we consider uncertainty in the retraction of the subsidy, a straightforward application of the results of Section \ref{sec:opt} lead to the following.

\begin{proposition}
The value of the investment option with a fixed-premium FIT scheme and regulatory uncertainty is given by:\footnote{The general option value function is $W_1 P^{\eta_1} + V_1^{(\omega)}P^{\beta_1}$.}
\begin{equation}
\label{eq:OptionPremiumR}
F_{PR} (P) =  
\begin{cases}
\left(V_F(P_{PR}^{\ast}) - I -V_1^{(\omega)}{P_{PR}^\ast}^{\beta_1}  \right) \left(\dfrac{P}{P_{PR}^{\ast}}\right)^{\eta_{1}}+ V_1^{(\omega)}P^{\beta_1} & \textrm{ for }   P < P_{PR}^{\ast} \\ \\
V_F(P) - I  & \textrm{ for }   P \geqslant P_{PR}^{\ast}
\end{cases}
\end{equation} 
where the investment threshold $P_{PR}^{\ast}$ is the solution of the following equation, which needs to be solved numerically.
\begin{equation}
\label{eq:trigger_premium}
-(\eta_1- \beta_1)V_1^{(\omega)}{P_{PR}^\ast}^{\beta_1} + (\eta_1-1)\dfrac{Q}{r - \mu}P_{PR}^{\ast}+\eta_1\left(\dfrac{F Q}{r}\left(1-e^{-rT}\right)-I \right) = 0.
\end{equation}
\end{proposition}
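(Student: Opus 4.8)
The plan is to treat this as a direct application of the general regulatory-uncertainty framework of Section~\ref{sec:opt}, in the same manner as the collar and minimum-price-guarantee regimes, the only structural differences being that the fixed premium is not a limiting case of the collar and that its project value $V_P$ is affine in $P$. First I would record the two ingredients inherited from the no-uncertainty analysis. Once the firm has invested the contract is frozen, so the exercise payoff is the fixed-premium project value $V_P(P)-I$ with $V_P(P)=\frac{PQ}{r-\mu}+\frac{FQ}{r}(1-e^{-rT})$; this fixes the lower branch of \eqref{eq:OptionPremiumR} (the $V_F$ written there is a typographical slip for $V_P$). If the premium has already been cut to $\omega F$, case~(i) of Section~\ref{sec:opt} applies: the trigger $P_P^{\ast(\omega)}$ associated with the reduced premium exceeds the current one, so $P$ still lies in the continuation region and the option value collapses to its $P^{\beta_1}$ branch with coefficient
\[
V_1^{(\omega)}=\left(V_P^{(\omega)}\!\left(P_P^{\ast(\omega)}\right)-I\right)\left(\frac{1}{P_P^{\ast(\omega)}}\right)^{\beta_1},
\]
the fixed-premium analogue of \eqref{eq:N1_omega}, where $P_P^{\ast(\omega)}$ is \eqref{eq:triggerP} evaluated at the reduced premium $\omega F$.

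Next I would solve the pre-reduction problem. In the continuation region the value function obeys \eqref{eq:ODE_fixed_ru2} with $A_1^{(\omega)}$ replaced by $V_1^{(\omega)}$. A one-line substitution shows that $V_1^{(\omega)}P^{\beta_1}$ is a particular solution: since $\beta_1$ solves $0.5\,\sigma^2\beta(\beta-1)+\mu\beta-r=0$, applying the operator $\mu P\,\partial_P+0.5\,\sigma^2 P^2\,\partial_P^2-r$ to $P^{\beta_1}$ returns zero, leaving only the $\lambda$-term, which cancels the inhomogeneity $\lambda V_1^{(\omega)}P^{\beta_1}$. The homogeneous equation is governed by the quadratic \eqref{eq:etas}; discarding its negative root, so that the option value vanishes as $P\to0^{+}$, retains the positive root $\eta_1$ of \eqref{eq:quadratic_eta1} (note $\eta_1\geqslant\beta_1>1$ because $r+\lambda\geqslant r$), and yields the general solution $W_1P^{\eta_1}+V_1^{(\omega)}P^{\beta_1}$, consistent with \eqref{eq:Ffixed_RU}.

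Finally I would impose value matching and smooth pasting at the free boundary $P_{PR}^{\ast}$ against $V_P(P)-I$. Value matching gives $W_1=\left(V_P(P_{PR}^{\ast})-I-V_1^{(\omega)}{P_{PR}^{\ast}}^{\beta_1}\right){P_{PR}^{\ast}}^{-\eta_1}$, and substituting this back reproduces the upper branch of \eqref{eq:OptionPremiumR}. Smooth pasting, after multiplying through by $P_{PR}^{\ast}$ and eliminating $W_1$, delivers
\[
\eta_1\left(V_P(P_{PR}^{\ast})-I-V_1^{(\omega)}{P_{PR}^{\ast}}^{\beta_1}\right)+\beta_1 V_1^{(\omega)}{P_{PR}^{\ast}}^{\beta_1}-V_P'(P_{PR}^{\ast})P_{PR}^{\ast}=0,
\]
and inserting $V_P'(P)=\frac{Q}{r-\mu}$ together with the explicit $V_P$ collapses this to \eqref{eq:trigger_premium}. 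None of these steps is genuinely hard, since the whole argument is an instance of the template already used for the collar and floor regimes; the only point requiring care is the bookkeeping in the last substitution, where $\eta_1\frac{P_{PR}^{\ast}Q}{r-\mu}$ and $-\frac{Q}{r-\mu}P_{PR}^{\ast}$ must be combined into $(\eta_1-1)\frac{Q}{r-\mu}P_{PR}^{\ast}$, and the justification, via the boundary behaviour at the origin, for keeping only the positive root $\eta_1$.
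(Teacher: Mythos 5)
Your proposal is correct and takes essentially the same route as the paper, which itself gives no explicit proof beyond invoking the general framework of Section \ref{sec:opt}: you identify $V_1^{(\omega)}$ as the continuation-region coefficient of the already-reduced problem, verify that $V_1^{(\omega)}P^{\beta_1}$ is a particular solution of \eqref{eq:ODE_fixed_ru2}, keep only the positive root $\eta_1$ via the boundary condition at the origin, and recover \eqref{eq:trigger_premium} from value matching and smooth pasting; the algebra checks out. You are also right that the $V_F$ appearing in the branches of \eqref{eq:OptionPremiumR} is a typographical slip for $V_P$, as confirmed by the absence of the factor $e^{-(r-\mu)T}$ in the derivative term of \eqref{eq:trigger_premium}.
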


\section{Analytical and Numerical Study}
\label{sec:Comparative statics analysis Trigger}
In this section, we study the influence of some parameters on the investment thresholds of the FIT schemes addressed in this paper. In some cases, we are able to present analytical results and corresponding plots of the thresholds; whereas in other cases, we only present the plots based on numerical results for a given set of parameter values. For the latter case,  we have performed many other tests with other parameter values, besides the one presented in this section, and the results are qualitatively the same.

For the numerical study,  we use a typical European onshore wind farm with 25 wind turbines \cite{enevoldsen16},  although our FIT models do not have any technology-specific characteristics. The investment cots of each turbine is 1.5 Million Euros / MW \cite{EWEA09} and each turbine has a 2MW capacity. We also assume that the wind turbine's capacity factor is 30\%, which is a reasonable estimate according to \citeasnoun{EWEA09}.

In order to simplify the analysis, we calculate the investment threshold for a single turbine but the results can be easily extended for any number of turbines. We also assume that all parameters have annualized values. For instance, a fixed-price FIT with an $F$ equal to \EUR{25} / MWh generates an annual revenue of \EUR{131,400.00} (i.e., 30 \% x 2 MWh x \EUR{25} / MWh x 24 hours x 365 days).

In addition, we use the same values of \citeasnoun{Ritzenhofen16} for the GBM parameters, which are based on real data. Table \ref{tab:parameters} summarizes the base-case parameters of the numerical study in this section. In particular for the sliding premium, we assume that the annual revenue from the price cap is equal to \EUR{300,000.00} ($\approx$ \EUR{57.08} / MWh).

\begin{table}[!ht]
\caption{Base-case parameters used to calculate the thresholds}
\label{tab:parameters}
%\footnotesize
\centering
    \begin{tabular}{llll|}
  \hline \hline
  $r$ & risk-free rate & 5\%\\  
  $F$ & tariff & \EUR{25} / MWh \\
%  $C$ & price cap & \EUR{58} / MWh \\
  $T$ & finite duration of FIT & 15 years \\
  $\mu$ & deterministic drift & 0\%\\
  $\sigma$ & volatility & 19\% \\
  $I$ & total investment cost & \EUR{3} Millions\\
  $\omega$ & the reduction of F is (1 - $\omega$) & 80\% \\
  $\omega_C$ & the reduction of $C$ is (1 - $\omega_C$) & 100\% \\
  $\lambda$ & mean arrival rate of a jump event & 0.5 \\
  \hline \hline
\label{tab:parametersPT}
\end{tabular}\\
\end{table}

Next, we present the comparative statics results for the two parameters related with regulatory uncertainty, that we are able to derive analytically, by manipulation of the equations that define implicitly the investment thresholds (as in the case of regulatory uncertainty, the investment thresholds for all the regimes are defined implicitly, by some non-linear equation). The proofs are presented in Appendix \ref{app:proof_comparative_statics}, and are somehow simplified, as they follow from long calculations.

\begin{proposition}
\label{prop:comparative_statics}
When there is regulatory uncertainty, the investment thresholds for all the four schemes considered in this paper decrease with $\lambda$  and increase with $\omega$.
\end{proposition}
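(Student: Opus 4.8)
The plan is to treat all four regimes at once by noting that, under regulatory uncertainty, each threshold $P_{SR}^\ast$ is the root of an equation of the common form
\[
G_S(P,\eta_1,c) := \eta_1\bigl(V_S(P)-I\bigr) - (\eta_1-\beta_1)\,c\,P^{\beta_1} - P\,V_S'(P) = 0,
\]
where $c$ denotes the relevant reduced-tariff option coefficient ($S_1^{(\omega)}$, $V_1^{(\omega)}$, $J_1^{(\omega)}$ or $N_1^{(\omega)}$). Rearranging \eqref{eq:trigger_fixed}, \eqref{eq:trigger_premium}, \eqref{eq:trigger_collar_reg} and \eqref{eq:trigger_floor_reg} all produce this shape. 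The two policy parameters enter through disjoint channels: $\lambda$ appears only inside $\eta_1$ via \eqref{eq:quadratic_eta1}, while $\omega$ appears only inside the coefficient $c=c^{(\omega)}$, which solves the already-reduced-tariff problem and hence carries no $\lambda$-dependence (after the jump there is no further uncertainty, so that problem uses exponent $\beta_1$, not $\eta_1$). I would then apply the implicit function theorem, $\partial P_{SR}^\ast/\partial\theta = -(\partial_\theta G_S)/(\partial_P G_S)$, for $\theta=\eta_1$ and $\theta=c$, and chain through $\partial\eta_1/\partial\lambda$ and $\partial c^{(\omega)}/\partial\omega$.

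The crux is signing the three partials of $G_S$ at the root. First, $\partial_{\eta_1}G_S = (V_S(P)-I)-cP^{\beta_1}$; the value-matching condition turns this, at $P_{SR}^\ast$, into $K\,(P_{SR}^\ast)^{\eta_1}$, where $K$ is exactly the coefficient multiplying $P^{\eta_1}$ in the option value \eqref{eq:OptionFixedR} (and its collar/floor analogues). This $K$ is positive because holding the current higher tariff while merely facing a possible future reduction dominates holding the already-reduced tariff whose option value is $c\,P^{\beta_1}$; hence $\partial_{\eta_1}G_S>0$. Second, $\partial_c G_S = -(\eta_1-\beta_1)P^{\beta_1}<0$, since $\eta_1>\beta_1$ (compare \eqref{eq:quadratic_eta1} with \eqref{eq:quadratic_beta1}). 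Third, for $\partial_P G_S$ I would exploit that $P_{SR}^\ast$ maximizes the continuation value, i.e. maximizes $g_S(P):=\bigl(V_S(P)-I-cP^{\beta_1}\bigr)P^{-\eta_1}$; a direct differentiation gives the identity $G_S(P)=-P^{\eta_1+1}g_S'(P)$, so at the optimum $g_S'=0$ and $\partial_P G_S\big|_{P_{SR}^\ast}=-(P_{SR}^\ast)^{\eta_1+1}g_S''(P_{SR}^\ast)>0$ by the second-order condition. Combining through the implicit function theorem then yields $\partial P_{SR}^\ast/\partial\eta_1<0$ and $\partial P_{SR}^\ast/\partial c>0$.

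To finish I would sign the two chain-rule factors. Differentiating \eqref{eq:quadratic_eta1} gives $\partial\eta_1/\partial\lambda = \sigma^{-2}\bigl[(\tfrac12-\tfrac{\mu}{\sigma^2})^2+\tfrac{2(r+\lambda)}{\sigma^2}\bigr]^{-1/2}>0$, so $\partial P_{SR}^\ast/\partial\lambda<0$. For $\omega$, each coefficient is the maximized reduced-tariff option value, $c^{(\omega)}=\max_{P}\bigl(V_S^{(\omega)}(P)-I\bigr)P^{-\beta_1}$, consistent with \eqref{eq:J1_omega}; by the envelope theorem $\partial c^{(\omega)}/\partial\omega = \bigl(\partial_\omega V_S^{(\omega)}\bigr)P^{-\beta_1}$ at the reduced-tariff trigger, and $\partial_\omega V_S^{(\omega)}>0$ because raising $\omega$ raises the post-jump tariff $\omega F$ and hence the project value in every regime, as the explicit project-value formulas show. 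Thus $\partial c^{(\omega)}/\partial\omega>0$ and $\partial P_{SR}^\ast/\partial\omega>0$ for all four schemes.

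The step I expect to be the main obstacle is signing the denominator $\partial_P G_S$ rigorously — equivalently, confirming that the numerically selected root is the genuine maximizer of $g_S$, so that $g_S''(P_{SR}^\ast)<0$ rather than a spurious stationary point. Since the paper only claims numerical existence and uniqueness of these roots, a fully rigorous argument would require a global shape analysis of $g_S$ regime by regime (delicate because $V_S$ is piecewise for the collar and minimum-price cases) or an appeal to the real-options verification that the smooth-pasting root maximizes the option value. A secondary delicacy is establishing the positivity of $K$ on the branch containing $P_{SR}^\ast$ when $V_S$ is piecewise.
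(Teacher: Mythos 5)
Your proposal is correct and follows the same overall strategy as the paper's Appendix C: implicit differentiation of the trigger equations, with $\lambda$ acting only through $\eta_1$ and $\omega$ acting only through the post-jump option coefficient $c^{(\omega)}\in\{S_1^{(\omega)},V_1^{(\omega)},N_1^{(\omega)},J_1^{(\omega)}\}$, whose $\omega$-derivative is obtained exactly as you do, via the envelope theorem on the reduced-tariff problem. The execution differs in two respects. First, you collapse the four trigger conditions into the single generic form $\eta_1(V_S-I)-(\eta_1-\beta_1)cP^{\beta_1}-PV_S'=0$, whereas the paper argues scheme by scheme using the explicit (and, for the floor and collar, piecewise) equations of Appendix A; your unification buys one argument for all regimes but hides the branch analysis $P^\ast\lessgtr F$, $P^\ast\lessgtr C$ that the paper carries out explicitly. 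Second, to sign the denominator $\partial_P G_S>0$ you use the identity $G_S=-P^{\eta_1+1}g_S'$ together with the second-order condition at the maximizer, whereas the paper substitutes the smooth-pasting relation into the explicit derivative and reads off positivity from $W_1,U_1,R_1,K_1>0$ and $\eta_1>\beta_1>1$; both routes ultimately rest on the same structural facts (positivity of the coefficient on $P^{\eta_1}$ --- your constant $K$ --- and selection of the correct root), and you are right that this is the residual gap, which the paper also asserts rather than proves, writing only ``it can be shown'' for the collar case. One small point in your favor: your computation $\partial\eta_1/\partial\lambda>0$ is the correct one, since $\eta_1$ is the larger root of a convex quadratic whose constant term $-(r+\lambda)$ decreases in $\lambda$; the paper's appendix states $\partial\eta_1/\partial\lambda<0$, a slip that is compensated by other sign and exponent slips in its displayed expressions for $\partial E_S/\partial\lambda$, so its conclusions are unaffected but your version is the clean one.
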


Therefore, investment is anticipated when it is more likely to occur a reduction of the tariff, because investors prefer to anticipate the investment decision in order to obtain a higher tariff for a longer period. 
Figure \ref{fig:TriggerLambda} illustrates this behavior, for different values of the parameter $\lambda$. We also include the investment threshold in a free-market condition (i.e., $P^{\ast}_W$),\footnote{We calculate $P^{\ast}_W$ with the threshold of the fixed-premium where the tariff $F$ is equal to 0.} where investors do not have any FIT policies available.  Note that, for the sliding premium, we do not consider changes in the price cap  (i.e., $\omega_C = 1$) due to regulatory uncertainty. We analyze the effect of regulatory uncertainty on the price cap later in this section.

\begin{figure}[!ht]
    \centering
    \includegraphics[width=0.7\textwidth]{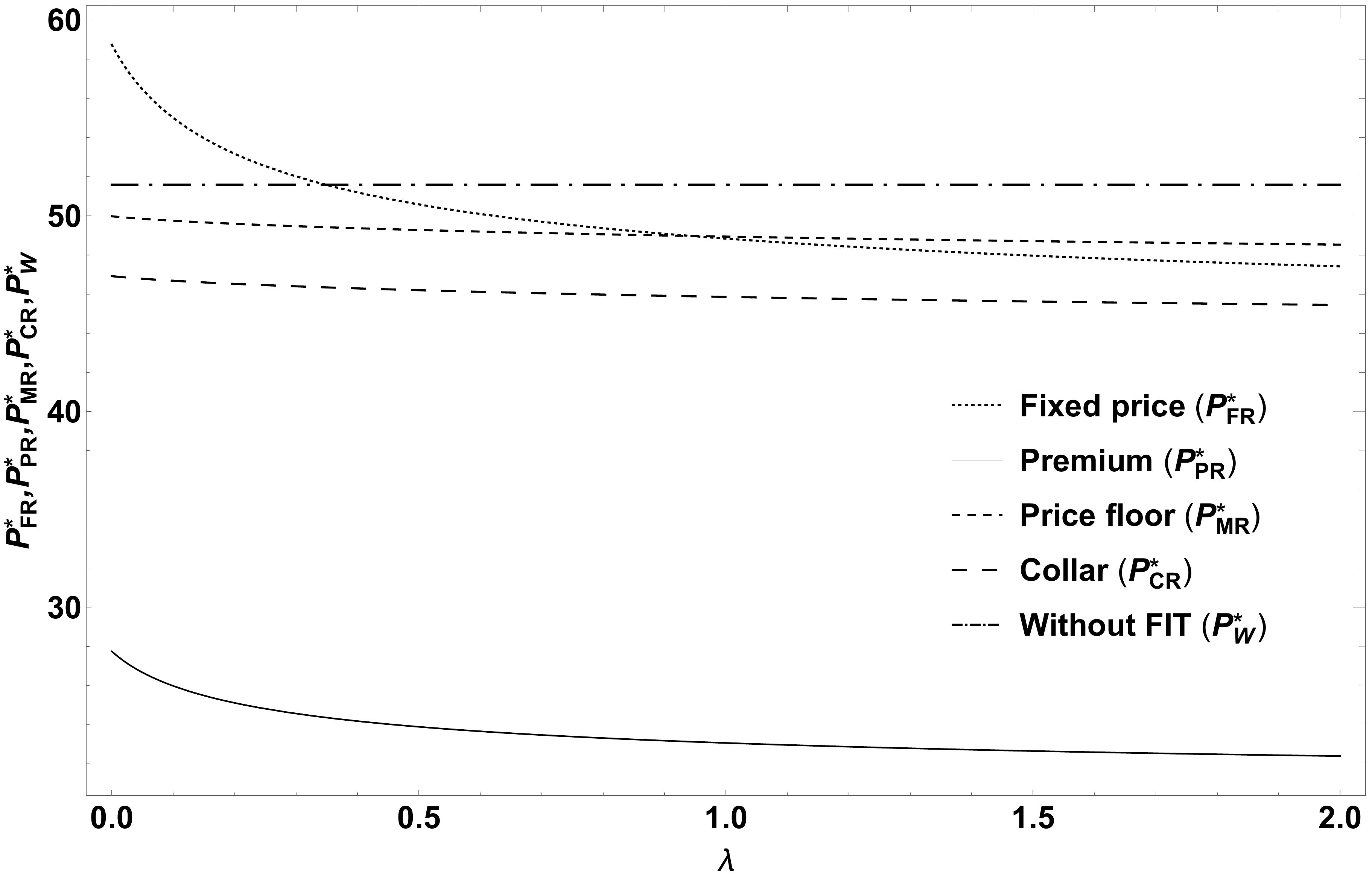}
    \caption{Triggers $P_{FR}^{\ast}$, $P_{PR}^{\ast}$, $P_{MR}^{\ast}$ and $P_{CR}^{\ast}$ as a function of $\lambda$}
    \label{fig:TriggerLambda}
\end{figure}

An interesting result in Figure \ref{fig:TriggerLambda} is that increasing $\lambda$ generates a higher reduction in the fixed price and fixed premium thresholds than in the price-floor and collar thresholds. The intuition behind this effect is due to the reduction and how it affects the four FIT policies. While the reduction in the fixed-price and premium-price policies always leads to a revenue loss, the price-floor and collar regimes only suffer a revenue loss when $P < F$.

A similar effect occurs if the reduction of the tariff is larger.  Figure \ref{fig:TriggerOmega} illustrates such behavior.  The plot of the investment threshold of the collar regime assumes that the reduction only affects the price floor. From Figure \ref{fig:TriggerOmega},  we can observe that all the investment thresholds decrease as $\omega$ decreases. This is due to the fact that lower values of $\omega$ produce higher reductions in the tariff, and consequently a lower expected profit. These potential losses generate lower investment thresholds because investors decide to accelerate investment in order to obtain a higher tariff before the jump event occurs. In addition, the effect of $\omega$ is greater in the fixed-price and fixed-premium FIT schemes, as shown in Figure \ref{fig:TriggerOmega}, because of the same reason explained for the parameter $\lambda$.

\begin{figure}[!ht]
    \centering
    \includegraphics[width=0.7\textwidth]{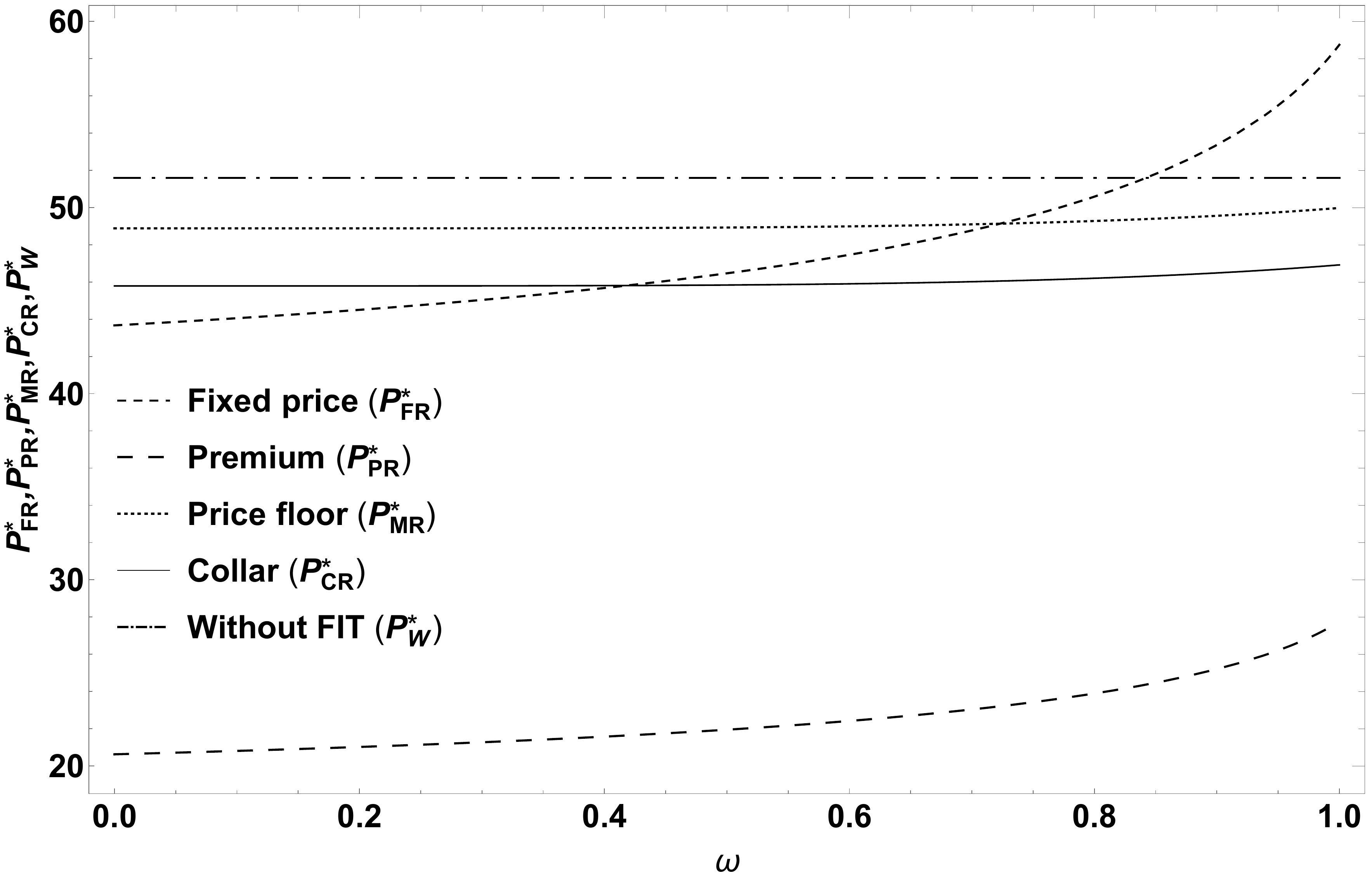}
    \caption{Triggers $P_{FR}^{\ast}$, $P_{PR}^{\ast}$, $P_{MR}^{\ast}$ and $P_{CR}^{\ast}$ as a function of $\omega$}
    \label{fig:TriggerOmega}
\end{figure}

Although we assume in Figure \ref{fig:TriggerOmega} that a policy-making decision could only reduce the price floor in the collar regime, we know that policy uncertainty may also affect the price cap. Hence, Figures \ref{fig:TriggerColarOmegaC} and \ref{fig:TriggerColarLambdaOmegaC} present the investment threshold when a policy reduction affects the price cap. We assume that the price cap may be reduced to $\omega_C C$ when a jump event occurs with a probability $\lambda dt$. In Figure \ref{fig:TriggerColarOmegaC}, we observe that the investment threshold decreases as $\omega_C$ decreases, thus accelerating the investment decision. In addition, Figure \ref{fig:TriggerColarLambdaOmegaC}  plots the investment threshold when $\omega =1$ and $\omega_C =0.8$, and we can see that the threshold reduces as $\lambda$ increases. Hence, a higher probability of occurring a reduction makes investors accelerate the decision-making process in order to obtain a higher tariff before the jump event.

Another interesting observation occurs when we compare the collar's threshold plot in Figure \ref{fig:TriggerLambda}, where $\omega = 0.8$ and $\omega_C = 1$, with Figure \ref{fig:TriggerColarLambdaOmegaC}, where $\omega = 1$ and $\omega_C = 0.8$. As $\lambda$ increases, we observe that the investment threshold has a larger reduction when the policymaker reduces the price cap, as observed in Figure \ref{fig:TriggerColarLambdaOmegaC}, than the price floor, as shown in Figure \ref{fig:TriggerLambda}. In a similar comparison, we observe a larger reduction in the investment threshold in Figure \ref{fig:TriggerColarOmegaC}, where $\omega_C$ varies in the interval $[0,1]$, than the reduction  in the collar's threshold plot in Figure \ref{fig:TriggerOmega}, where $\omega$ varies in the interval $[0,1]$. In summary, these plots show that the price cap reduction has a higher impact on investment threshold than the price floor reduction.

\begin{figure}[!ht]
\centering
\begin{subfigure}{.5\textwidth}
  \centering
  \includegraphics[width=0.97\linewidth]{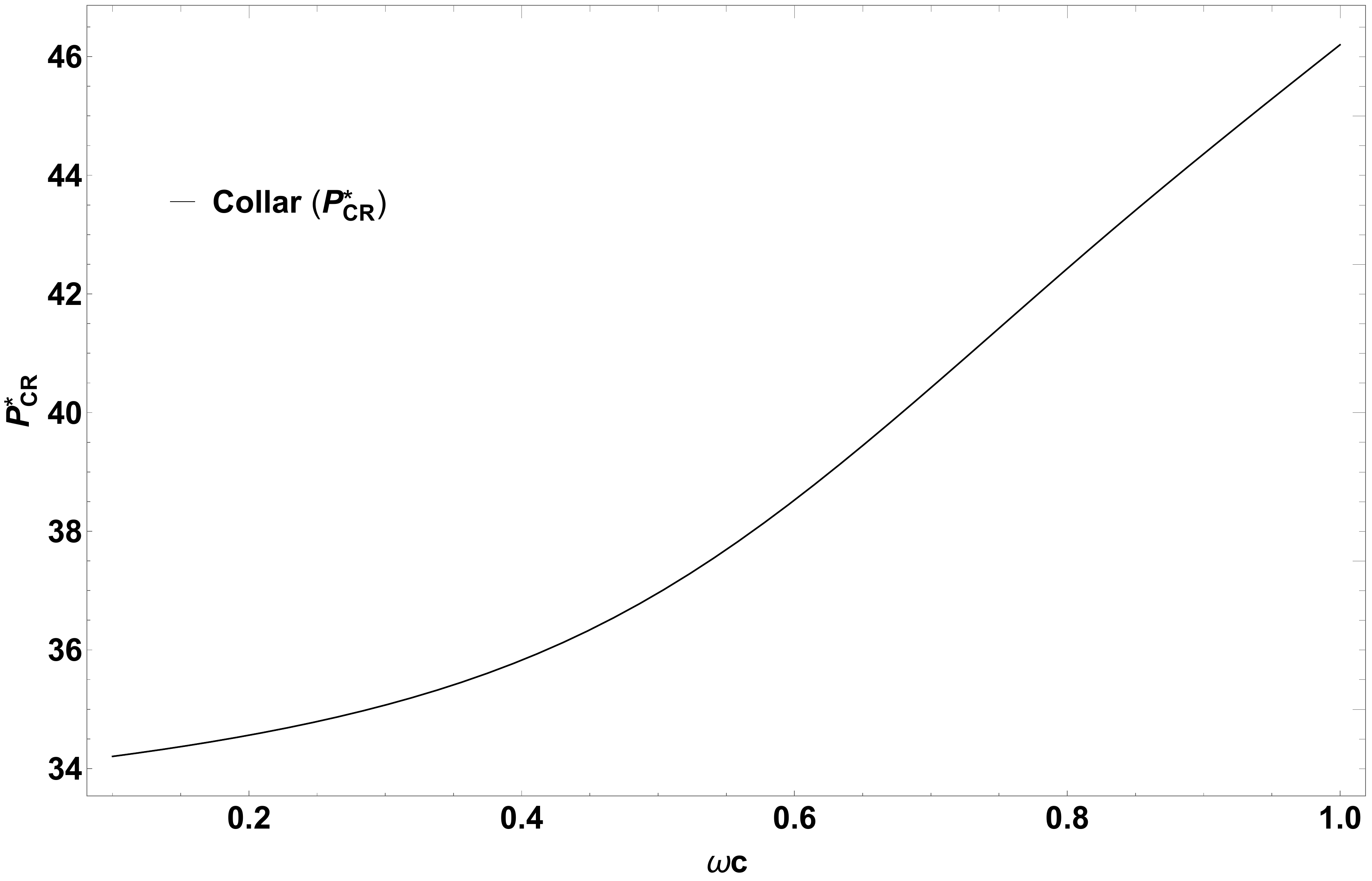}
  \caption{Trigger as a function of $\omega_C$}
  \label{fig:TriggerColarOmegaC}
\end{subfigure}%
\begin{subfigure}{.5\textwidth}
  \centering
  \includegraphics[width=0.98\linewidth]{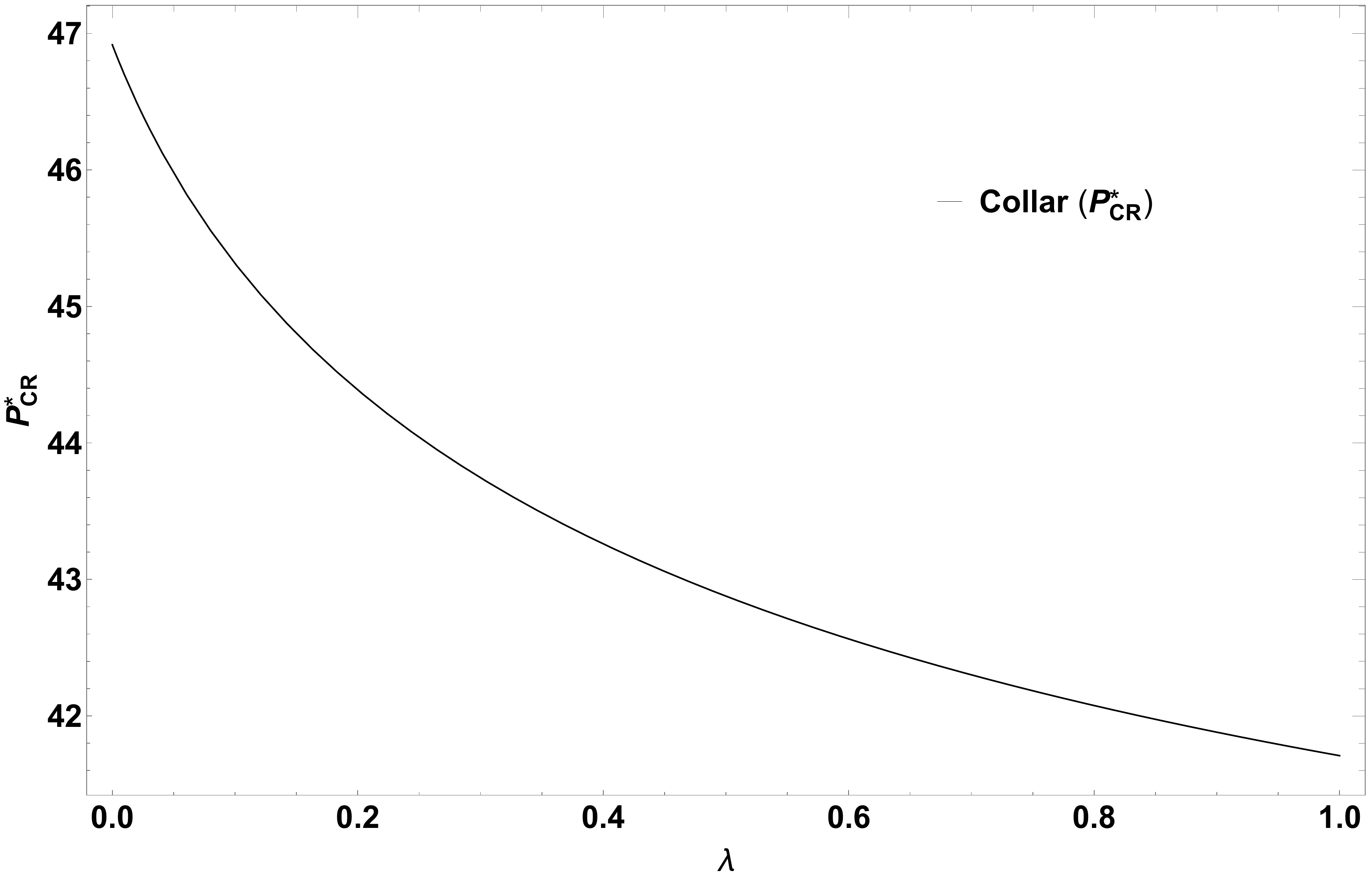}
  \caption{Trigger as a function of $\lambda$}
  \label{fig:TriggerColarLambdaOmegaC}
\end{subfigure}
\caption{Trigger $P_{CR}^{\ast}$ as a function of $\omega_C$ and $\lambda$}
\label{fig:test}
\end{figure}

\begin{figure}[!ht]
    \centering
    \includegraphics[width=0.7\textwidth]{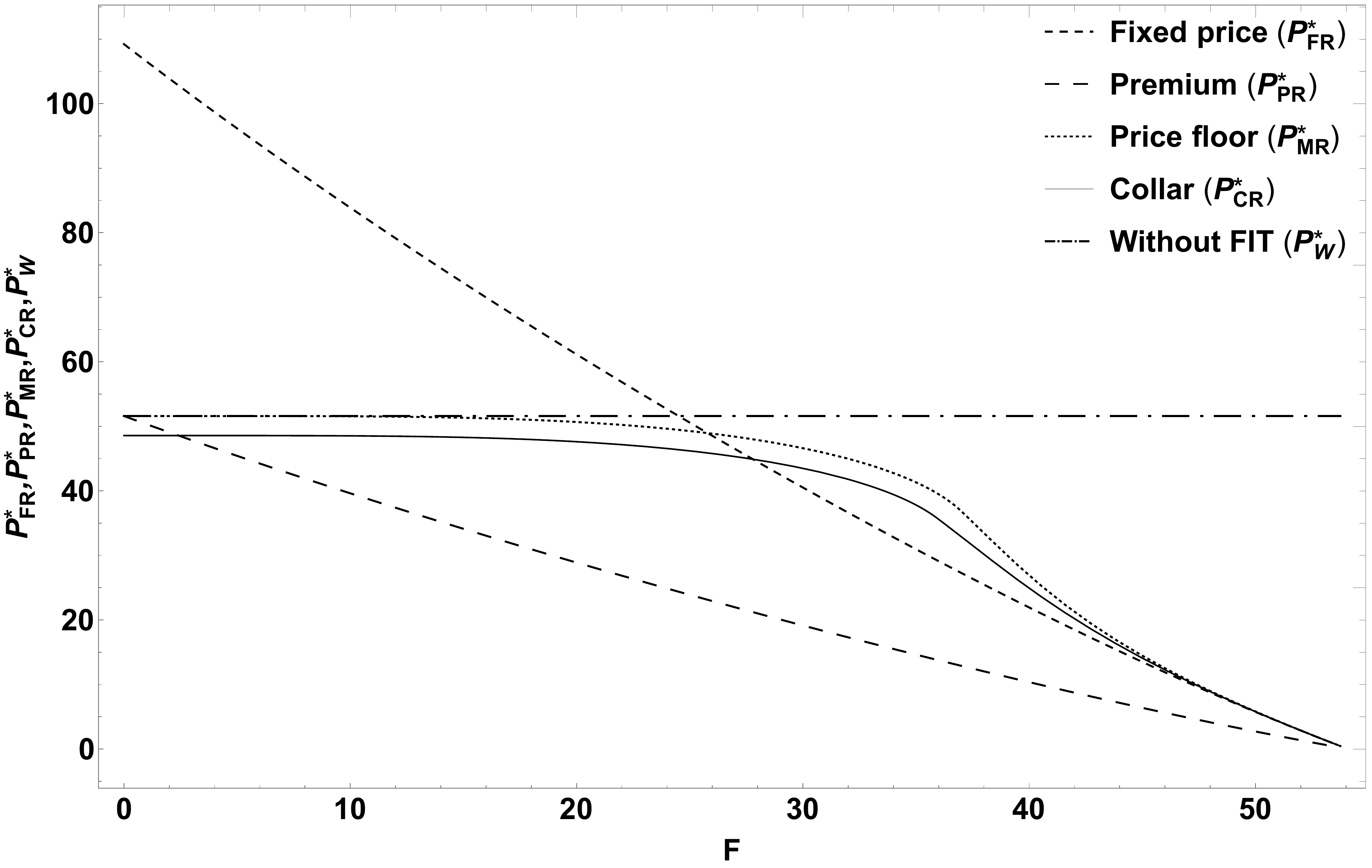}
    \caption{Thresholds $P_{FR}^{\ast}$, $P_{PR}^{\ast}$, $P_{MR}^{\ast}$ and $P_{CR}^{\ast}$ as a function of $F$}
    \label{fig:TriggerF}
\end{figure}

Figure \ref{fig:TriggerF} presents the investment thresholds for the four different FIT policies as a function of the tariff $F$. The plots show that the investment thresholds of the FIT policies decrease as $F$ increases; in other words, the decision to invest is accelerated when $F$ increases\footnote{The same result holds for a scenario without regulatory uncertainty}. In the Appendix we find the analytical proof of such result for the fixed price and fixed premium (with and without regulatory uncertainty).

We also observe that the thresholds for the fixed-premium regime, price-floor regime, and collar regime are lower than the investment threshold in a free-market condition. In fact, these are expected results for policymakers because a key goal of a price-based incentive, such as a FIT, should be to accelerate the investment decision. The only exception is the fixed-price regime for values of $F$ approximately lower than \EUR{25}/MWh, where the investment threshold in a free-market condition is below the investment threshold of the fixed-price regime. This result suggests that investors prefer the free-market condition to a low remuneration from the fixed-price FIT. Hence, policymakers should offer values of the tariff with a fixed-price scheme greater than this point. In other words, this is the minimum value of the tariff that policymakers should offer within a fixed-price FIT contract.

The plots of the investment threshold of the collar regime and price-floor regime in Figure \ref{fig:TriggerF} present two relevant results for policymakers. First, the price floor regime is a special case of the collar regime as shown in Figure \ref{fig:Triggercollarpricefloor}, whereby the investment threshold of the collar regime rapidly converges to the price-floor regime as we increase the value of the cap $C$.\footnote{In Appendix \ref{app:proofs_relations} at Proposition \ref{prop:ThresholdCollarEqualPriceFloor} we prove that the value of the project with a collar regime converges to a price-floor regime when $C \to +\infty$. Consequently, the thresholds of both regimes are the same when $C \to +\infty$.} Second, we observe from Figure \ref{fig:TriggerF} that the collar threshold is below the price-floor threshold when both regimes have the same price floor value. This result of accelerating the investment decision is due to the cap, especially for lower values of the price floor. An explanation for this result is that investors want to avoid receiving the cap, and hence prefer to start the investment when the market price is lower. From a policy-making perspective, this result suggests that the collar regime is a better policy than the price-floor regime because it accelerates the investment while  avoiding excessive earnings to producers.
In addition, we analytically prove that the value of the project with a price-floor regime is always greater than the value of the project with a collar regime in Appendix \ref{app:proofs_relations} at Proposition \ref{prop:VMPandVCP1}. These are expected results because an investor should have a higher profit without the price cap.

\begin{figure}[!ht]
    \centering
    \includegraphics[width=0.7\textwidth]{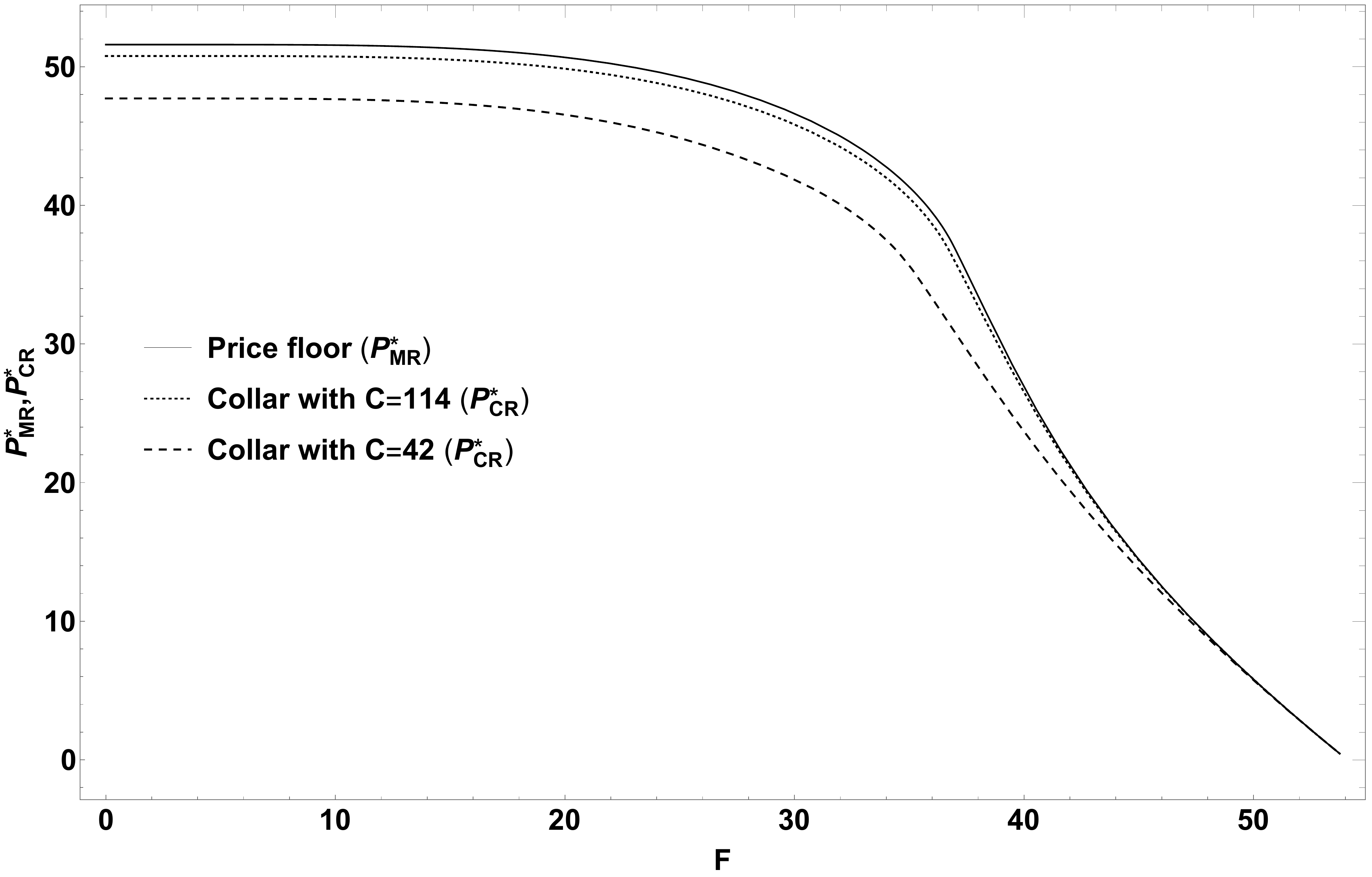}
    \caption{Triggers $P_{MR}^{\ast}$ and $P_{CR}^{\ast}$ as a function of the tariff $F$ for different values of the cap $C$}
    \label{fig:Triggercollarpricefloor}
\end{figure}

Figure \ref{fig:Triggerpc} shows a plot of the investment thresholds of the collar scheme as a function of the cap $C$. From the figure, we can observe that the investment threshold has a minimum point where $C$ is approximately equal to 42. Therefore, policymakers should not offer caps above this value.

\begin{figure}[!ht]
    \centering
    \includegraphics[width=0.6\textwidth]{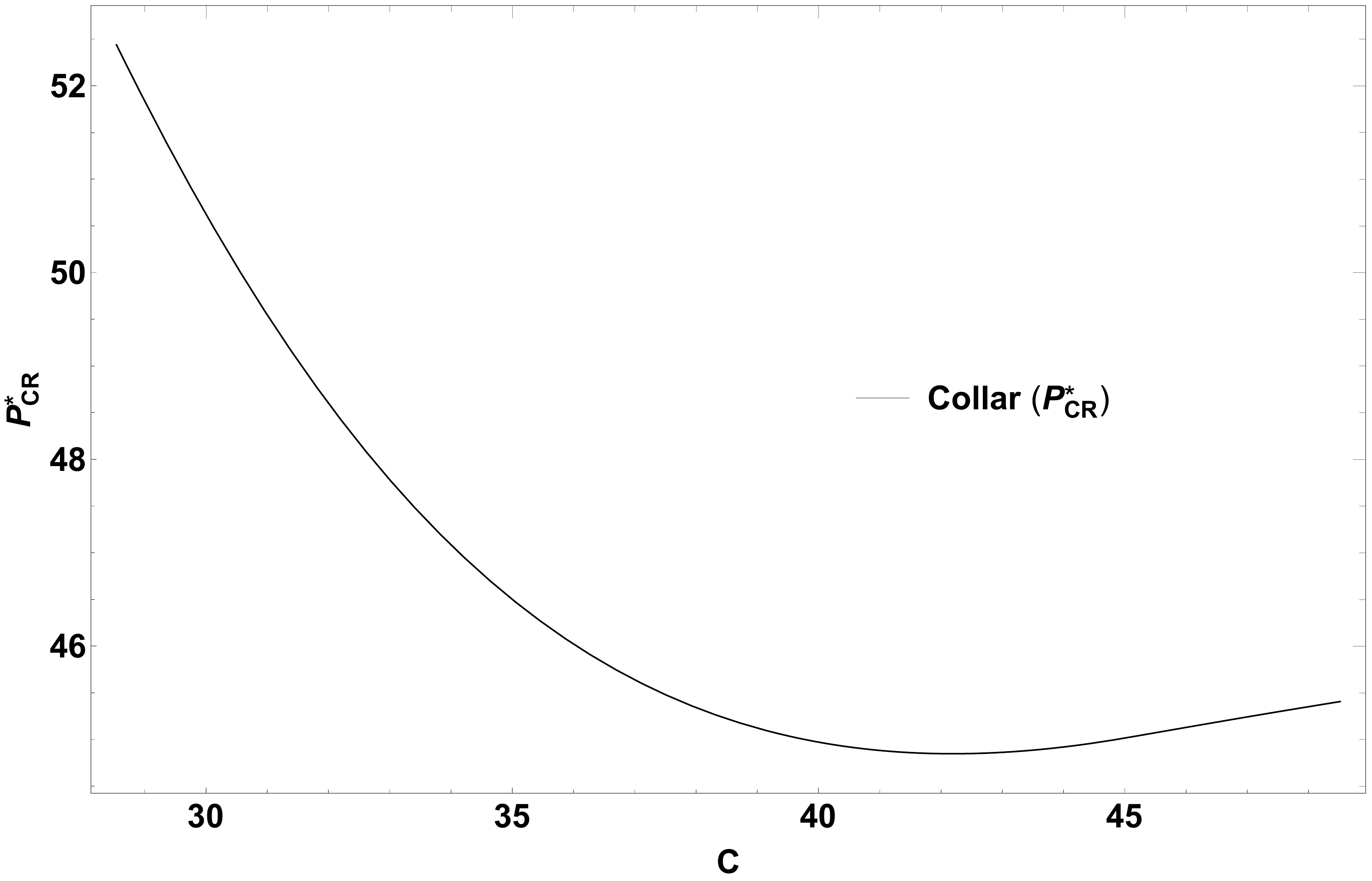}
    \caption{Threshold $P_{CR}^{\ast}$ as a function of the price cap $C$}
    \label{fig:Triggerpc}
\end{figure}

Figure \ref{fig:TriggerSigma} presents the plots of the investment thresholds for different values of the volatility $\sigma$. The results are consistent with the real options theory where higher volatilities increase the thresholds and consequently postpone the investment decision. Moreover, in the Appendix we prove analytically such result for the fixed price and for the fixed premium regimes, with and without regulatory uncertainty.

\begin{figure}[!ht]
    \centering
    \includegraphics[width=0.7\textwidth]{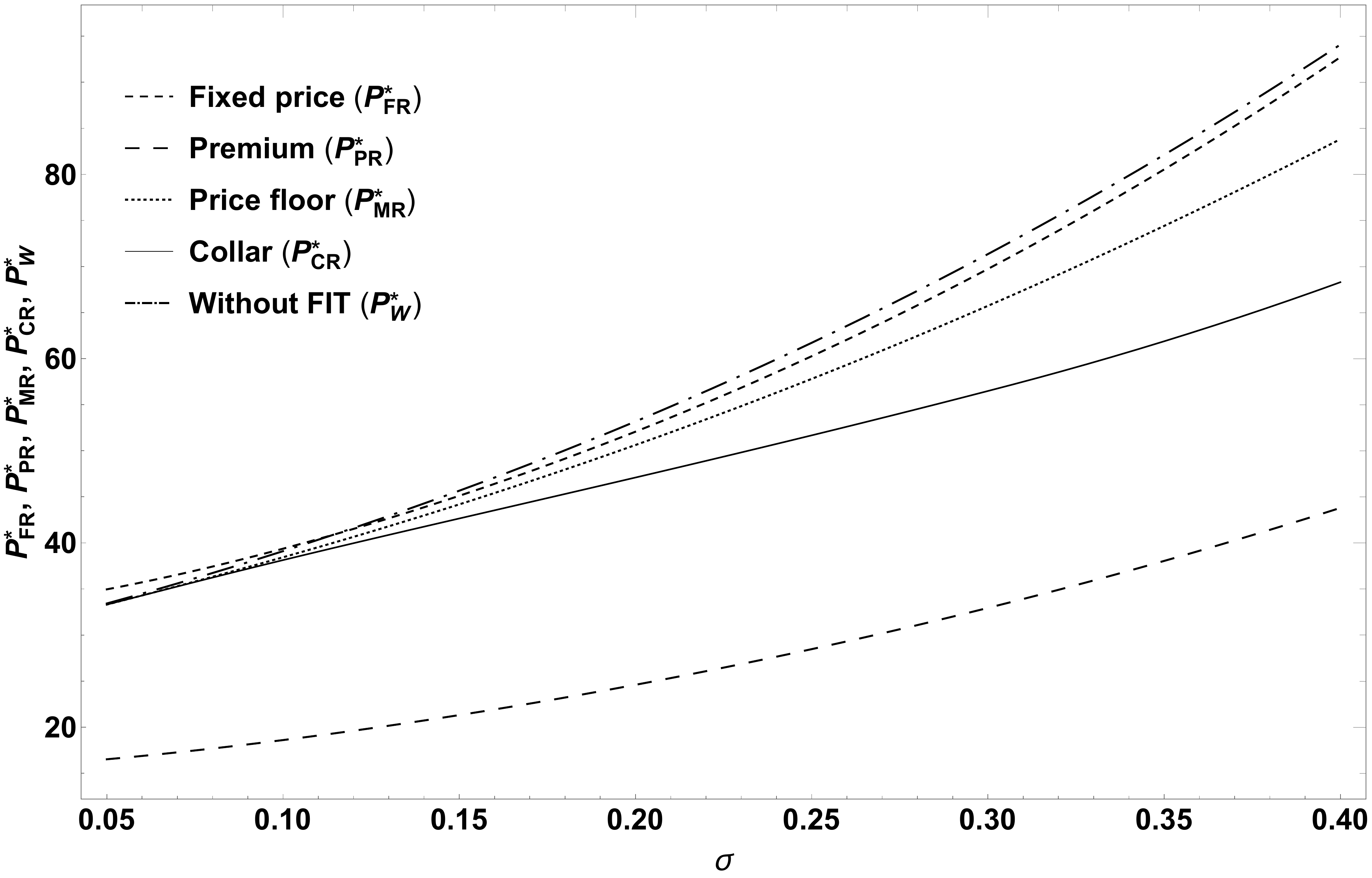}
    \caption{Thresholds $P_{FR}^{\ast}$, $P_{PR}^{\ast}$, $P_{MR}^{\ast}$ and $P_{CR}^{\ast}$ as a function of $\sigma$}
    \label{fig:TriggerSigma}
\end{figure}

In Figure \ref{fig:TriggerT_all} we present the plots of the investment thresholds for different values of the duration of the contract $T$. The premium, floor and collar regimes presented in Figure \ref{fig:TriggerT} accelerate the investment when policymakers increase the duration of the contract. However, the fixed-price regime is different when the tariff is low, as shown in Figure \ref{fig:TiggerFixedT}. For a tariff equal to \EUR{25}/MWh, note that the fixed-price FIT duration has a non-monotonic effect: it initially accelerates investment and then it  defers investment as we increase the duration of the contract. Figure \ref{fig:TiggerFixedT} also shows that this effect does not occur for higher tariffs (i.e., \EUR{37.5}/MWh and \EUR{50}/MWh) because the investment is accelerated when the duration of the contract increases. Hence, policymakers should not use low tariffs in the fixed-price regime because it might generate an undesired behavior of postponing the investment decision. 

\begin{figure}[!ht]
\centering
\begin{subfigure}{.5\textwidth}
  \centering
  \includegraphics[width=0.98\linewidth]{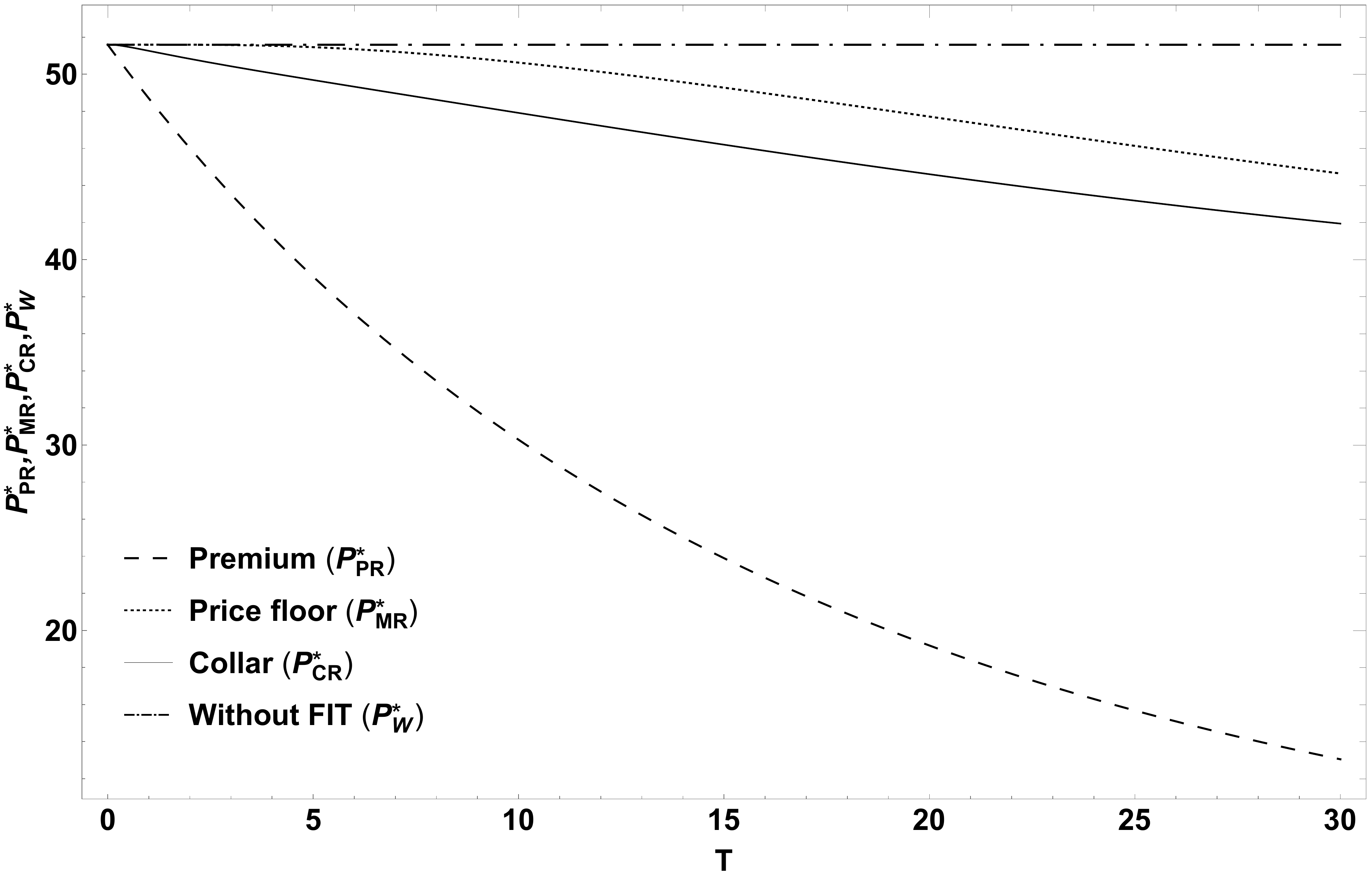}
  \caption{Premium, Floor and Collar regimes}
  \label{fig:TriggerT}
\end{subfigure}%
\begin{subfigure}{.5\textwidth}
  \centering
  \includegraphics[width=0.98\linewidth]{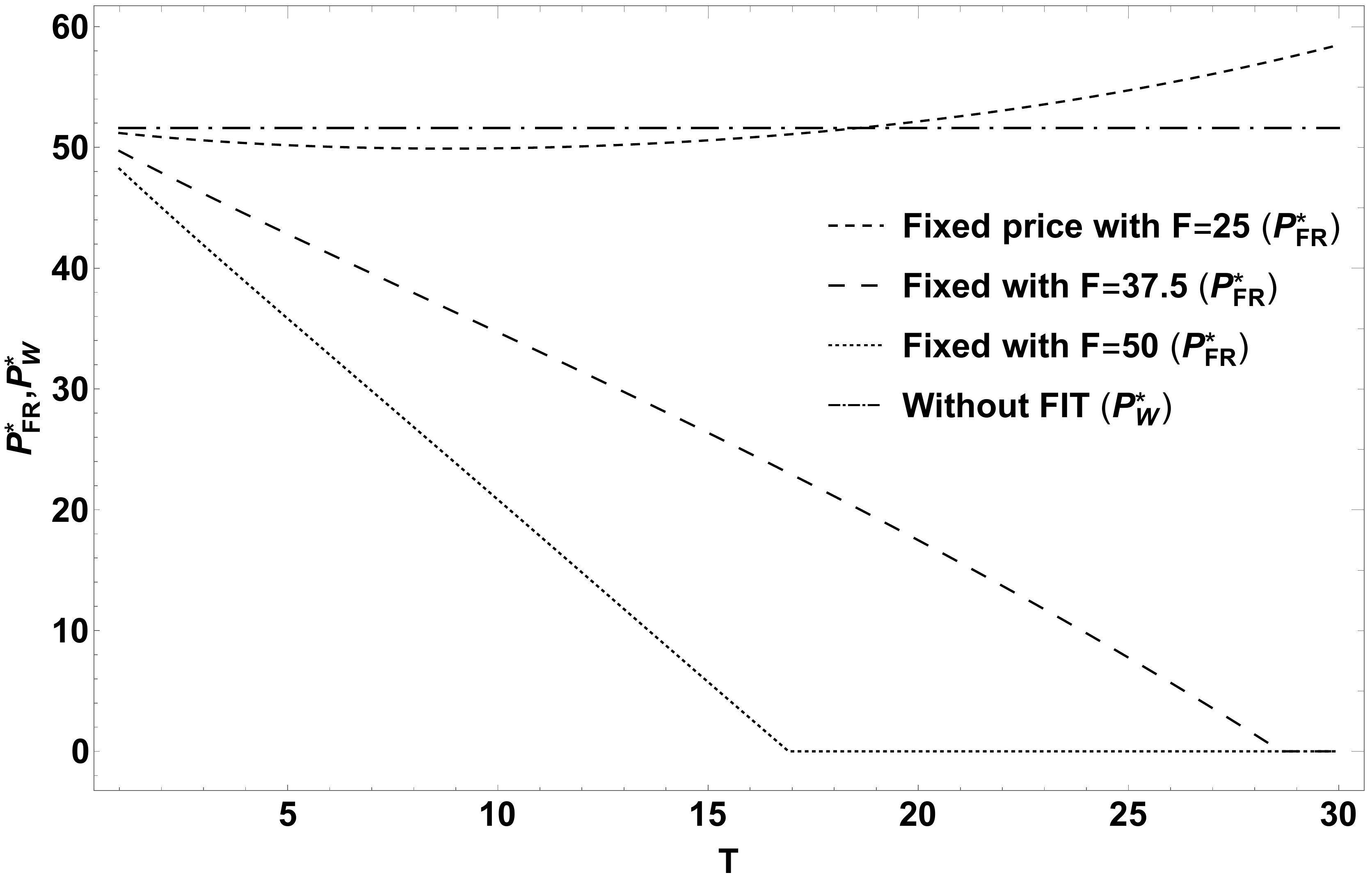}
  \caption{Fixed price regime for different values of $F$}
  \label{fig:TiggerFixedT}
\end{subfigure}
\caption{Investment threshold as function of $T$}
\label{fig:TriggerT_all}
\end{figure}

From the previous figures in this section, one might draw a conclusion that the fixed-premium regime is the best FIT contract because it presents the lowest investment trigger. However, this is a misleading conclusion because the previous plots consider that all regimes have the same tariff value $F$. In particular, the fixed-premium regime considers that the remuneration is equal to the market price plus the tariff $F$, which generates a higher remuneration than all the other regimes. Figure \ref{fig:FxTrigger} presents the values of the tariff that generate the same investment trigger. As expected, the fixed-premium plot is below all the other plots because a policymaker should use a smaller tariff for this regime in order to generate the same threshold as the other regimes. Figure \ref{fig:FxTrigger} can also be used as a tool for policy-making because it presents the values of the tariff $F$ that generate the same trigger for the four FIT policies.

\begin{figure}[!ht]
    \centering
    \includegraphics[width=0.8\textwidth]{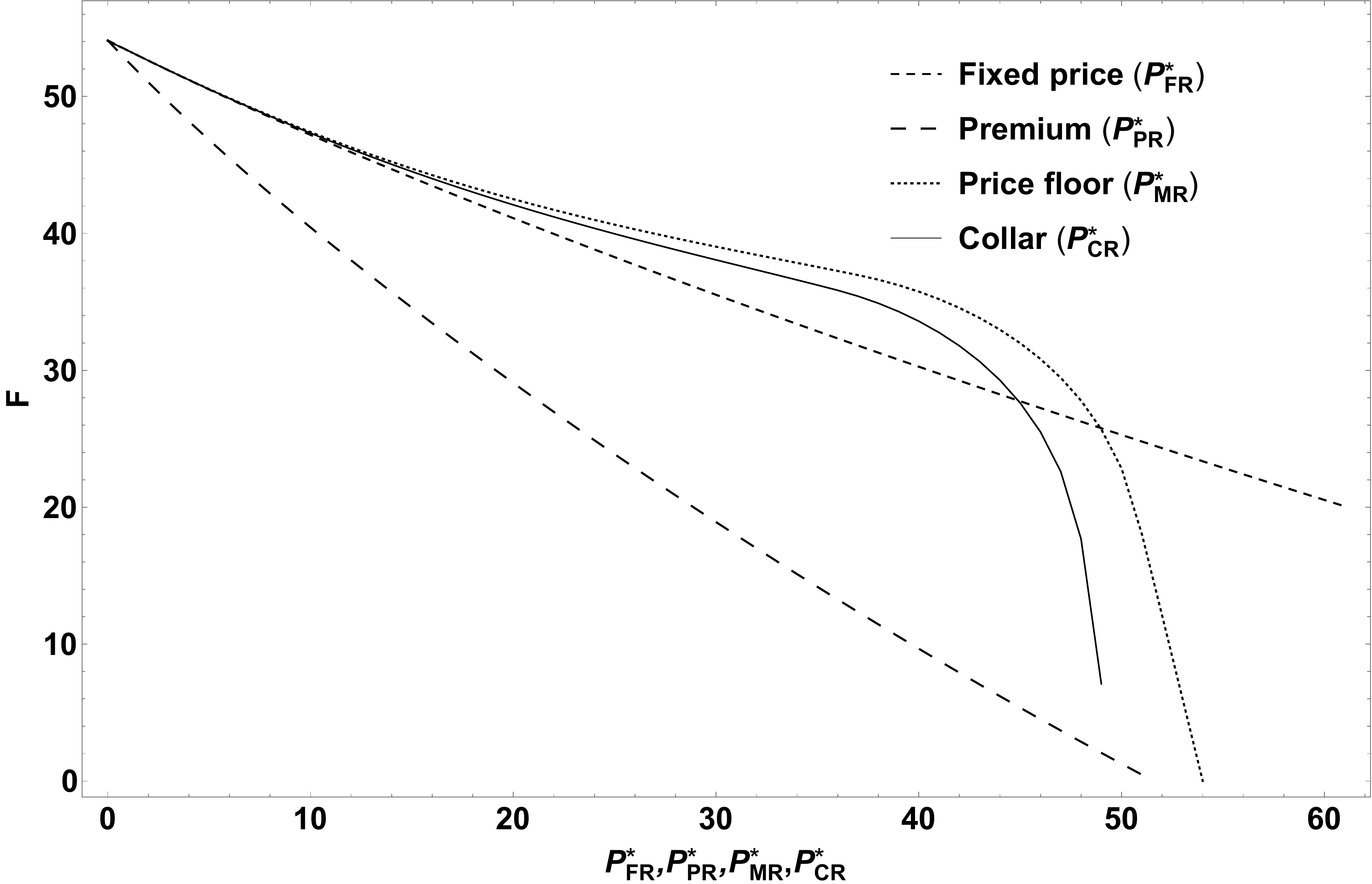}
    \caption{Feed-in tariff $F$ as a function of the thresholds}
    \label{fig:FxTrigger}
\end{figure}

\section{Concluding remarks}
\label{sec:conclusions}

This work analyzes two different feed-in premiums, namely a minimum price guarantee and a sliding premium with cap and floor. We also include the fixed-price FIT and fixed-premium FIT in the analysis, because these schemes have been widely analyzed within the scientific community. For each scheme, we use  a semi-analytical real options framework in order to calculate the value of the project, the option value, and the optimal investment threshold. We also extend the models of each FIT scheme in order to include regulatory uncertainty, where we use a Poisson process to model an occasional reduction on the tariff before the project starts.

Regarding the effect of the regulatory uncertainty, the results show that a higher probability of occurring a reduction in the tariff accelerates the investment because investors are anticipating the investment decision in order to obtain a higher tariff. In addition, a higher probability of occurring a policy change generates a higher reduction in the fixed-price and fixed-premium thresholds than the thresholds of the price-floor regime and the collar regime. This effect is due to a revenue loss that always occurs in the fixed-price and fixed-premium schemes in the event of a policy change, while the price-floor and collar schemes only suffer a revenue loss when the market price is below the guarantee. Regarding the analysis of the tariff reduction, we also reach the same conclusions. Thus, a higher reduction of the tariff accelerates the investment. We also find that the effect of the tariff reduction is greater for the fixed-price and fixed-premium thresholds than the price-floor and collar thresholds.

This paper also presents several results for policy-making. First, the sliding premium  has an interesting property. The possibility of reducing only the cap leads to a lower threshold when compared with the same effect on the price floor. Hence, the collar regime under the risk of regulatory uncertainty in the price cap has a higher impact on the investment threshold than the price floor.

Second, the thresholds of the fixed-premium regime, price-floor regime, and collar regime are lower than the investment threshold in a free-market condition. In fact, this is an expected result, because these policy schemes are aimed at accelerating a renewable energy investment. However, the fixed-price regime only has a threshold below the free-market condition when the tariff is greater than a certain value. Consequently, policymakers should avoid offering low values of the tariff in the fixed-price regime. 

Third, the results show that the collar regime has a lower investment threshold than the price-floor regime, especially when the price floor assumes lower values. This could suggest that the collar regime is a better policy because it accelerates investment and has the property of avoiding excessive earnings to producers. 

Fourth, increasing the duration of the contract reduces the investment threshold of the fixed-premium regime, price-floor regime, and collar regime. However, this effect does not occur in the fixed-price regime if the tariff is low. Lastly, we present an interesting tool for policy-making with our model, whereby policymakers can find the values of the tariff $F$ that generate the same trigger for the four FIT policies.

The current paper can be extended in several ways. From a policy-making perspective, the welfare effects of the different FIT schemes are relevant. These include not only the budget costs but also the effects on the consumers' and producers' surpluses, as well as the environmental effects. Further research could also consider the optimal capacity choice of the renewable energy producer, following \protect \citeasnoun{bar1999timing} and \protect \citeasnoun{dangl1999investment} for the monopoly case, and \protect \citeasnoun{huisman2015strategic} for the duopoly case, and \protect \citeasnoun{Boomsma12} and \protect \citeasnoun{Chronopoulos16} for price-taker firms. Another interesting extension is the possibility of retroactive reduction of the tariff (after the investment occurs). Since we deal with finite-lived schemes, a numerical method is required to find the project value, as it becomes time-dependent (the value of the project depends upon when the FIT is reduced).

\section*{Acknowledgements}

This work was supported by a PhD scholarship provided by Funda\c{c}\~{a}o para a Ci\^{e}ncia e a Tecnologia (FCT) through the MIT Portugal program under project SFRH/BD/52086/2013, and was supported by national funds through Funda\c{c}\~{a}o para a Ci\^{e}ncia e a Tecnologia (FCT) with reference UID/CEC/50021/2019. In addition, this work was carried out within the funding with COMPETE reference n\textsuperscript{o} POCI-01-0145-FEDER-006683 (UID/ECO/03182/2013), with the FCT/MEC's (Funda\c{c}\~{a}o para a Ci\^{e}ncia e a Tecnologia, I.P.) financial support through national funding and by the ERDF through the Operational Programme on ``Competitiveness and Internationalization - COMPETE 2020'' under the PT2020 Partnership Agreement. 
Cl\'audia Nunes acknowledge support from FCT through project reference  \text{FARO\_PTDC}/EGE-ECO/30535/2017 and the Research Council of Norway through project nr. 268093.

\newpage

\begin{appendix}

\section{Equations for finding the triggers}

In this section we present the equations that need to be solved numerically to obtain the triggers for investment for the sliding premium and minimum price guarantee FITs.

\subsection{Sliding premium with cap and floor}
\label{app:collar}

For the case without regulatory uncertainty, Equation \eqref{eq:trigger_collar} results in following three equations:
\begin{equation}
\label{eq:trigger_collar_app}
\begin{cases}
- (\beta_1 - \beta_2) \left( G_2 \left( \Phi(d_{\beta_2}(P_C^{\ast},F)) - \Phi(d_{\beta_2}(P_C^{\ast},C)) \right) \right. \\
\qquad \qquad \qquad \left. + H_2 \Phi(d_{\beta_2}(P_C^{\ast},C)) \right) {P_C^{\ast}}^{\beta_2} \\
\qquad + (\beta_1 - 1)\dfrac{ P_C^\ast Q}{r-\mu}e^{-(r-\mu)T} \left(1- \left( \Phi(d_1(P_C^{\ast},F)) - \Phi(d_1(P_C^{\ast},C)) \right) \right) \\
\qquad + \beta_1 \dfrac{F Q}{r}\left( 1-e^{-rT} \left(1- \Phi(d_0(P_C^{\ast},F)) \right) \right) \\
\qquad - \beta_1 \left(\dfrac{C Q}{r} e^{-rT} \left(1- \Phi(d_0(P_C^{\ast},C)) \right)+ I \right) = 0 & \textrm{ for } P_C^\ast < F \\
\\
(\beta_1 - \beta_2) \left( G_2 \left(1 - \left( \Phi(d_{\beta_2}(P_C^{\ast},F)) - \Phi(d_{\beta_2}(P_C^{\ast},C)) \right) \right)  \right. \\
\qquad \qquad \qquad \left. - H_2 \Phi(d_{\beta_2}(P_C^{\ast},C)) \right) {P_C^{\ast}}^{\beta_2} \\
\qquad + (\beta_1 - 1)\dfrac{ P_C^\ast Q}{r-\mu} \left(1+ e^{-(r-\mu)T} \left(1 - \left( \Phi(d_1(P_C^{\ast},F)) - \Phi(d_1(P_C^{\ast},C)) \right) \right) \right) \\
\qquad - \beta_1 \dfrac{F Q}{r}\left( 1-e^{-rT} \left(1- \Phi(d_0(P_C^{\ast},F)) \right) \right) \\
\qquad - \beta_1 \left(\dfrac{C Q}{r} e^{-rT}  \Phi(d_0(P_C^{\ast},C))+ I \right) = 0 & \textrm{ for } F \leqslant P_C^\ast < C \\
\\
- (\beta_1 - \beta_2) \left( G_2 \left( \Phi(d_{\beta_2}(P_C^{\ast},F)) - \Phi(d_{\beta_2}(P_C^{\ast},C)) \right)  \right. \\
\qquad \qquad \qquad \left. - H_2 \left( 1 - \Phi(d_{\beta_2}(P_C^{\ast},C)) \right)\right) {P_C^{\ast}}^{\beta_2} \\
\qquad + (\beta_1 - 1)\dfrac{ P_C^\ast Q}{r-\mu}e^{-(r-\mu)T} \left(1- \left( \Phi(d_1(P_C^{\ast},F)) - \Phi(d_1(P_C^{\ast},C)) \right) \right) \\
\qquad - \beta_1 \dfrac{F Q}{r} e^{-rT} \left(1- \Phi(d_0(P_C^{\ast},F)) \right) \\
\qquad + \beta_1 \left(\dfrac{C Q}{r} \left(1- e^{-rT} \left(1- \Phi(d_0(P_C^{\ast},C)) \right) \right)- I \right) = 0 & \textrm{ for } P_C^\ast \geqslant C
\end{cases}
\end{equation}

For the case with regulatory uncertainty, Equation \eqref{eq:trigger_collar_reg} results in following three equations:
\begin{equation}
\label{eq:trigger_collar_reg_app}
\begin{cases}
(\eta_1 - \beta_1) \left( E_1  \Phi(d_{\beta_1}(P_{CR}^{\ast},F)) \right. \\
\qquad \qquad \qquad \left. - G_1 \left( \Phi(d_{\beta_1}(P_{CR}^{\ast},F)) - \Phi(d_{\beta_1}(P_{CR}^{\ast},C)) \right) - J_1^{(\omega)} \right) {P_{CR}^{\ast}}^{\beta_1} \\
\qquad -(\eta_1 - \beta_2) \left( G_2 \left( \Phi(d_{\beta_2}(P_{CR}^{\ast},F)) - \Phi(d_{\beta_2}(P_{CR}^{\ast},C)) \right)  \right. \\
\qquad \qquad \qquad \left. + H_2 \Phi(d_{\beta_2}(P_{CR}^{\ast},C)) \right) {P_{CR}^{\ast}}^{\beta_2} \\
\qquad + (\eta_1 - 1)\dfrac{ P_{CR}^\ast Q}{r-\mu}e^{-(r-\mu)T} \left(1- \left( \Phi(d_1(P_{CR}^{\ast},F)) - \Phi(d_1(P_{CR}^{\ast},C)) \right) \right) \\
\qquad + \eta_1 \dfrac{F Q}{r}\left( 1-e^{-rT} \left(1- \Phi(d_0(P_{CR}^{\ast},F)) \right) \right) \\
\qquad - \eta_1 \left(\dfrac{C Q}{r} e^{-rT} \left(1- \Phi(d_0(P_{CR}^{\ast},C)) \right)+ I \right) = 0 & \textrm{ for } P_{CR}^\ast < F \\
\\
-(\eta_1 - \beta_1) \left( E_1  \left(1 - \Phi(d_{\beta_1}(P_{CR}^{\ast},F)) \right) \right. \\
\qquad \qquad \qquad \left. - G_1 \left(1-\left( \Phi(d_{\beta_1}(P_{CR}^{\ast},F)) - \Phi(d_{\beta_1}(P_{CR}^{\ast},C)) \right) \right) + J_1^{(\omega)} \right) {P_{CR}^{\ast}}^{\beta_1} \\
\qquad + (\eta_1 - \beta_2) \left( G_2 \left(1 - \left( \Phi(d_{\beta_2}(P_{CR}^{\ast},F)) - \Phi(d_{\beta_2}(P_{CR}^{\ast},C)) \right) \right)  \right. \\
\qquad \qquad \qquad \left. - H_2 \Phi(d_{\beta_2}(P_C^{\ast},C)) \right) {P_{CR}^{\ast}}^{\beta_2} \\
\qquad + (\eta_1 - 1)\dfrac{ P_{CR}^\ast Q}{r-\mu} \left(1+ e^{-(r-\mu)T} \left(1 - \left( \Phi(d_1(P_{CR}^{\ast},F)) - \Phi(d_1(P_{CR}^{\ast},C)) \right) \right) \right) \\
\qquad - \eta_1 \dfrac{F Q}{r}\left( 1-e^{-rT} \left(1- \Phi(d_0(P_{CR}^{\ast},F)) \right) \right) \\
\qquad - \eta_1 \left(\dfrac{C Q}{r} e^{-rT}  \Phi(d_0(P_{CR}^{\ast},C))+ I \right) = 0 & \textrm{ for } F \leqslant P_{CR}^\ast < C \\
\\
- (\eta_1 - \beta_1) \left( E_1  \Phi(d_{\beta_1}(P_{CR}^{\ast},F)) \right. \\
\qquad \qquad \qquad \left. + G_1 \left( \Phi(d_{\beta_1}(P_{CR}^{\ast},F)) - \Phi(d_{\beta_1}(P_{CR}^{\ast},C)) \right) + J_1^{(\omega)} \right) {P_{CR}^{\ast}}^{\beta_1} \\
\qquad + (\eta_1 - \beta_2) \left( G_2 \left( \Phi(d_{\beta_2}(P_{CR}^{\ast},F)) - \Phi(d_{\beta_2}(P_{CR}^{\ast},C)) \right)  \right. \\
\qquad \qquad \qquad \left. - H_2 \left( 1 - \Phi(d_{\beta_2}(P_{CR}^{\ast},C)) \right)\right) {P_{CR}^{\ast}}^{\beta_2} \\
\qquad + (\eta_1 - 1)\dfrac{ P_{CR}^\ast Q}{r-\mu}e^{-(r-\mu)T} \left(1- \left( \Phi(d_1(P_{CR}^{\ast},F)) - \Phi(d_1(P_{CR}^{\ast},C)) \right) \right) \\
\qquad - \eta_1 \dfrac{F Q}{r} e^{-rT} \left(1- \Phi(d_0(P_C^{\ast},F)) \right) \\
\qquad + \eta_1 \left(\dfrac{C Q}{r} \left(1- e^{-rT} \left(1- \Phi(d_0(P_{CR}^{\ast},C)) \right) \right)- I \right) = 0 & \textrm{ for } P_{CR}^\ast \geqslant C
\end{cases}
\end{equation}

\subsection{Minimum price guarantee} 
\label{app:floor}

For the case without regulatory uncertainty, Equation \eqref{eq:trigger_floor} results in the following two equations:
\begin{equation}
\label{eq:trigger_floor_app}
\begin{cases}
 - (\beta_1 - \beta_2) M_2 {P_M^{\ast}}^{\beta_2}\Phi(d_{\beta_2}(P_M^{\ast},F)) \\
\qquad +(\beta_1 - 1)\dfrac{ P_M^\ast Q}{r-\mu}e^{-(r-\mu)T} (1-\Phi(d_1(P_M^{\ast},F))) \\
\qquad + \beta_1 \left(\dfrac{F Q}{r}(1-e^{-rT}(1-\Phi(d_0(P_M^{\ast},F)))) - I \right) = 0 & \textrm{ for } P_M^\ast < F \\
\\
(\beta_1 - \beta_2) M_2 {P_M^{\ast}}^{\beta_2}(1-\Phi(d_{\beta_2(P_M^{\ast},F)})) \\ 
\qquad +(\beta_1 - 1)\dfrac{P_M^\ast Q}{r-\mu}\left(1+e^{-(r-\mu)T}(1-\Phi(d_1(P_M^{\ast},F))) \right)  \\
\qquad - \beta_1 \left(\dfrac{F Q}{r}e^{-rT}(1-\Phi(d_0(P_M^{\ast},F))) + I \right) = 0 & \textrm{ for } P_M^\ast \geqslant F
\end{cases}
\end{equation}

For the case with regulatory uncertainty, Equation \eqref{eq:trigger_floor_reg} results in the following two equations:
\begin{equation}
\label{eq:trigger_floor_reg_app}
\begin{cases}
(\eta_1 - \beta_1)(L_1 \Phi(d_{\beta_1})-N_1^{(\omega)}){P_{MR}^\ast}^{\beta_1}-(\eta_1-\beta_2)M_2{P_{MR}^\ast}^{\beta_2}\Phi(d_{\beta_2}) \\ 
\qquad +\eta_1\left( \dfrac{F Q}{r} -  \dfrac{F Q}{r}e^{-rT}(1-\Phi(d_{0}))-I\right) \\
\qquad  +(\eta_1-1) \dfrac{P_{MR}^{\ast} Q}{r-\mu}e^{-(r-\mu)T}(1-\Phi(d_{1})) = 0 & \textrm{ for } P_{MR}^\ast < F \\
\\
- (\eta_1-\beta_1)( L_1(1- \Phi(d_{\beta_1})) + N_1^{(\omega)}){P_{MR}^\ast}^{\beta_1} 
+ (\eta_1-\beta_2)M_2{P_{MR}^\ast}^{\beta_2}(1-\Phi(d_{\beta_2})) \\ 
\qquad  +(\eta_1-1) \dfrac{P_{MR}^{\ast} Q}{r-\mu}(1-e^{-(r-\mu)T}\Phi(d_{1}) \\
\qquad  + e^{-(r-\mu)T}) - \eta_1 \left(\dfrac{F Q}{r}e^{-rT}(1-\Phi(d_{0})) + I\right) = 0 & \textrm{ for } P_{MR}^\ast \geqslant F
\end{cases}
\end{equation}

\section{Proofs of relations between investment thresholds}
\label{app:proofs_relations}

\begin{proposition}
\label{prop:ThresholdCollarEqualPriceFloor}
The investment threshold of a project with perpetual sliding premium  is equal to the investment threshold with a perpetual minimum price guarantee when the price cap $C \to + \infty$.

\end{proposition}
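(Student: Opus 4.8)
The plan is to show that the value-matching and smooth-pasting conditions that define the perpetual threshold collapse, for both regimes, to the \emph{same} algebraic equation once the cap is large. For the perpetual collar the option to invest has value $A_1 P^{\beta_1}$ in the continuation region, so eliminating $A_1$ between value matching, $A_1{P^\ast}^{\beta_1}=V_{CP}(P^\ast)-I$, and smooth pasting, $\beta_1 A_1{P^\ast}^{\beta_1-1}=V_{CP}'(P^\ast)$, yields the threshold equation $\beta_1\bigl(V_{CP}(P^\ast)-I\bigr)-V_{CP}'(P^\ast)P^\ast=0$, exactly of the form \eqref{eq:trigger_collar} but with the perpetual project value \eqref{eq:ProjectValueC} in place of $V_C$. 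The analogous equation for the minimum price guarantee uses the perpetual value \eqref{eq:ProjectValueM}. The first step is therefore to locate the threshold in the interior branch $F\leqslant P^\ast<C$ of \eqref{eq:ProjectValueC}; this is where I expect the only real work to lie, and I address it at the end.

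Granting that $P^\ast$ sits in the middle branch, I would substitute $V_{CP}(P)=G_1P^{\beta_1}+G_2P^{\beta_2}+\frac{PQ}{r-\mu}$ into the threshold equation. The crucial structural fact is that the map $V\mapsto \beta_1 V-V'P$ annihilates every multiple of $P^{\beta_1}$: the term $G_1P^{\beta_1}$ contributes $\beta_1 G_1{P^\ast}^{\beta_1}$ to $\beta_1 V_{CP}(P^\ast)$ and the identical quantity $\beta_1 G_1{P^\ast}^{\beta_1}$ to $V_{CP}'(P^\ast)P^\ast$, so the two cancel and $G_1$ --- the only $C$-dependent coefficient appearing in this branch --- drops out entirely. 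What survives is
\[
(\beta_1-\beta_2)\,G_2\,{P^\ast}^{\beta_2}+(\beta_1-1)\dfrac{P^\ast Q}{r-\mu}-\beta_1 I=0 .
\]
Because $G_2$ in \eqref{eq:G2} coincides verbatim with $M_2$ in \eqref{eq:B2_perpetual}, and neither depends on $C$, this is precisely the equation produced by the same elimination from the $P\geqslant F$ branch $V_{MP}(P)=M_2P^{\beta_2}+\frac{PQ}{r-\mu}$ of \eqref{eq:ProjectValueM}. Hence the collar and the minimum-price-guarantee thresholds solve one and the same equation, \emph{independently} of $C$, which already delivers the claimed limit.

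The part that needs care --- and which I flagged as the main obstacle --- is justifying the branch in which the economically relevant root lies. I would argue that the pertinent root $P_{MP}^\ast$ of the displayed equation is finite and fixed (it does not involve $C$), and then verify that for every $C$ exceeding it the candidate $P^\ast=P_{MP}^\ast$ indeed falls in $[F,C)$ and is admissible, so that the cap branch $P\geqslant C$ of \eqref{eq:ProjectValueC} --- whose coefficient $H_2$ in \eqref{eq:M2} blows up like $C^{1-\beta_2}$ as $C\to\infty$, since $\beta_2<0$ by \eqref{eq:quadratic_beta2} --- never governs the threshold. Equivalently, since $\beta_1>1$ forces $G_1\propto C^{1-\beta_1}\to 0$ while $G_2=M_2$ stays constant, the interior branch of $V_{CP}$ converges to $V_{MP}$ together with its first derivative on every compact subset of $[F,\infty)$, and the region $\{P\geqslant C\}$ recedes to infinity; both facts confine the limiting threshold to the interior branch. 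Passing to $C\to+\infty$ then gives $P_{CP}^\ast=P_{MP}^\ast$, as asserted.
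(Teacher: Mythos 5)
Your proposal is correct, but it takes a genuinely different route from the paper. The paper's proof works at the level of the project value: it computes $\lim_{C\to+\infty}E_1=L_1$, $\lim_{C\to+\infty}G_1=0$, $\lim_{C\to+\infty}G_2=M_2$, discards the $P\geqslant C$ branch, concludes that $V_{CP}\to V_{MP}$ pointwise, and then asserts that equal payoff functions yield equal thresholds. You instead work directly on the threshold equation $\beta_1\bigl(V(P^\ast)-I\bigr)-V'(P^\ast)P^\ast=0$ and exploit the structural fact that the map $V\mapsto\beta_1 V-V'P$ annihilates every multiple of $P^{\beta_1}$, so that $G_1$ --- the only $C$-dependent coefficient in the branch $F\leqslant P^\ast<C$ --- drops out and the equation coincides verbatim with the minimum-price-guarantee equation because $G_2=M_2$. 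This buys you something the paper does not claim: the two perpetual thresholds coincide \emph{exactly} for every finite $C$ large enough that the threshold sits below the cap, not merely in the limit; the limit statement is then an immediate corollary. The price is the branch-location argument you flag at the end, which the paper sidesteps by arguing at the level of $V$; your handling of it (the root of the $C$-free equation is fixed, $G_1\propto C^{1-\beta_1}\to0$, and the region $P\geqslant C$ recedes) is adequate, and for completeness note that the same annihilation also disposes of the $P^\ast<F$ branch, where the collar and floor values differ only in the $P^{\beta_1}$ coefficients $E_1$ versus $L_1$. Both approaches are sound; yours is sharper, the paper's is shorter.
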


\textbf{Proof:} 
We first calculate the limit of the value of the project with a perpetual sliding premium  (i.e., Equation \eqref{eq:ProjectValueC}) when the price cap $C \to + \infty$.

As \eqref{eq:ProjectValueC} depends on $E_1$, $G_1$, $G_2$ and $H_2$, we calculate the limit for each of these variables. In addition, it is easy to see that when $C \to + \infty$, the third region (i.e: when $P \geqslant C$) disappears. Hence, we do not calculate the limit of $H_2$ because it appears only in the third region \footnote{Recall that $G_2$ is equal to $M_2$.}.

\begin{align}
& \lim_{C\to+\infty} E_1 = \dfrac{ F^{1-\beta_1}Q} {\beta_1-\beta_2}\left(\frac{\beta_2}{r}-\frac{\beta_2 - 1}{r-\mu}\right) = L_1 \\
& \lim_{C\to+\infty} G_1 = 0 \\
& \lim_{C\to+\infty} G_2 = G_2 = M_2
\end{align}

Hence, the value of the project with a perpetual sliding premium  when the price cap $C \to + \infty$ is:

\begin{equation}
V_{CP}(P) =  
\begin{cases}
L_1 P^{\beta_1} + \dfrac{F Q }{r} & \textrm{ for }  P < F  \\
\ \\
M_2 P^{\beta_2} + \dfrac{P Q}{r - \mu} & \textrm{ for }  P \geqslant F  
\end{cases} 
= V_{MP}(P)
\end{equation}

We demonstrate that when $C \to + \infty$, the value of the project with a perpetual sliding premium  is equal to the value of the project with a perpetual minimum price guarantee. Therefore, it is easy to see that when $C \to + \infty$, the optimal threshold of the project with a perpetual sliding premium  is also equal to the optimal threshold of the project with a minimum price guarantee contract for the same sunk cost $I$.

\begin{proposition}
\label{prop:ThresholdCollarEqualFixedPrice}
The investment threshold of a project with perpetual sliding premium  is equal to the investment threshold with fixed price scheme when the price floor $F$ is equal to the price cap $C$.
\end{proposition}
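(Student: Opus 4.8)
The plan is to mirror the proof of Proposition \ref{prop:ThresholdCollarEqualPriceFloor}, replacing the limit $C \to +\infty$ by the substitution $C = F$. The conceptual starting point is the payoff itself: with $C = F$ the collar flow $\Pi_C(P) = \min(\max(P,F),C)Q$ collapses to $FQ$ for every $P$ (if $P < F$ then $\max(P,F) = F$ and $\min(F,F) = F$; if $P \geqslant F$ then $\min(P,F) = F$), which is exactly the fixed-price flow $\Pi_F(P) = FQ$. This already indicates that the two projects must coincide; the task is to verify it at the level of the closed-form value and then transfer the conclusion to the thresholds.

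First I would substitute $C = F$ into the coefficients of the perpetual collar value (\ref{eq:ProjectValueC}). Inspecting (\ref{eq:E1}) and (\ref{eq:M2}), the factor $F^{1-\beta_1} - C^{1-\beta_1}$ in $E_1$ and $F^{1-\beta_2} - C^{1-\beta_2}$ in $H_2$ both vanish, so $E_1 = 0$ and $H_2 = 0$. Moreover, when $C = F$ the intermediate region $F \leqslant P < C$ is empty, so the coefficients $G_1, G_2$ never enter the piecewise value. Substituting these facts into (\ref{eq:ProjectValueC}) collapses the three branches into $V_{CP}(P) = FQ/r$ for all $P$, which is precisely the value of a perpetual fixed-price contract. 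Exactly as in Proposition \ref{prop:ThresholdCollarEqualPriceFloor}, since the two project values agree for every $P$ and the investment threshold is obtained from the same value-matching and smooth-pasting conditions with the same sunk cost $I$, the optimal thresholds coincide.

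To make the statement useful for the finite-duration fixed-price result (\ref{eq:VFfixed}) invoked in Section \ref{sec:Fixed FIT}, I would also carry the substitution $C = F$ through the delayed-collar term $S_C(P)$ in (\ref{eq:SC}). The key simplification is that $d_\beta(P,F) = d_\beta(P,C)$ for every $\beta$, so all the bracketed differences $\Phi(d_\beta(P,F)) - \Phi(d_\beta(P,C))$ vanish, killing the $G_1, G_2$ and market-price contributions; together with $E_1 = H_2 = 0$ only the floor and cap normal terms survive, and the identity $\Phi(-d_0(P,F)) + \Phi(d_0(P,F)) = 1$ reduces them to $S_C(P) = (FQ/r)e^{-rT}$. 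Plugging $V_{CP} = FQ/r$ and this $S_C$ into (\ref{eq:VALC}) yields $V_C(P) = (FQ/r)(1 - e^{-rT}) + \frac{PQ}{r-\mu}e^{-(r-\mu)T} = V_F(P)$, after which Proposition \ref{prop3} delivers the closed-form threshold (\ref{eq:triggerfixed}).

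The step I expect to be most delicate is conceptual rather than computational: in the perpetual case the resulting value $FQ/r$ is independent of $P$, so the usual smooth-pasting characterization of an interior threshold degenerates (one invests immediately iff $FQ/r > I$). The honest content of the perpetual statement is therefore the identity $V_{CP} \equiv FQ/r$, and I would phrase the threshold conclusion as this limiting case, leaving the genuinely interior threshold to emerge in the finite-$T$ computation above, where $V_C = V_F$ depends on $P$ through the discounted market-price term.
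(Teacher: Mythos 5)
Your proposal is correct and follows essentially the same route as the paper's proof: setting $C=F$ forces $E_1=H_2=0$ and collapses $V_{CP}$ to $FQ/r$, and the same substitution in $S_C$ (using $\Phi(-d_0)=1-\Phi(d_0)$ and the vanishing of the $\Phi(d_\beta(P,F))-\Phi(d_\beta(P,C))$ differences) gives $S_C(P)=(FQ/r)e^{-rT}$, so that $V_C=V_F$ and the thresholds coincide. Your closing observation that the genuinely interior threshold only emerges in the finite-$T$ case is a fair refinement the paper leaves implicit, but it does not change the argument.
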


\textbf{Proof:} 
We first calculate the value of the project with a perpetual sliding premium  (i.e., Equation \eqref{eq:ProjectValueC} when the price cap $C$ is equal to the price floor $F$.

As \eqref{eq:ProjectValueC} depends on $E_1$, $G_1$, $G_2$ and $H_2$, we first calculate the value of these variables when $C = F$. It is easy to see that $E_1$ and $H_2$ are equal to $0$, hence the first and third regions (i.e: when $P < F$ and $P \geqslant C$) are equal $\frac{F Q} {r} $. In addition, substituting $F$ for $P$ and $C$ in the second region also yields to $\frac{F Q} {r} $. Consequently, when $C = F$, $V_{CP}(P) = \frac{F Q} {r}$

Next, as $\Phi(-d_{\beta}(P,X)) = 1 - \Phi(d_{\beta}(P,X))$, substituting $F$ for $P$ and $C$ in \eqref{eq:SC} yields:

\begin{equation}
\label{eq:SC1}
S_C(P) =   \dfrac{F Q }{r}e^{-rT} 
\end{equation}

Therefore, substituting $\frac{F Q }{r}e^{-rT}$ for $S_C(P)$ and $\frac{F Q }{r}$ for $V_{CP}(P)$ in \eqref{eq:VALC} (i.e.: the value of the project with a finite collar scheme) yields:

\begin{equation}
\label{eq:VALC1}
V_{C} (P) = \dfrac{F Q }{r} - \dfrac{F Q }{r}e^{-rT} + \dfrac{P Q}{r-\mu}e^{-(r-\mu)T} = \dfrac{F Q }{r}(1 - e^{-rT}) + \dfrac{P Q}{r-\mu}e^{-(r-\mu)T}\end{equation}

In conclusion, the value of the project with a finite collar scheme in \eqref{eq:VALC} (i.e.: when $C=F$) is equal to \eqref{eq:VFfixed} (i.e.: the expected value of the project when there is a fixed price scheme). Thus, it is straightforward to see that the investment trigger of the project with a finite collar scheme is equal to the investment trigger of the project when there is a fixed price scheme.

\vspace{1.0cm}

\textbf{Proof of  Proposition \ref{prop:VMPandVCP1}:} In order to prove the result, we first prove that $V_{MP}(P) \geqslant V_{CP}(P)$ for any choice of the parameters, i.e., we prove first that the value of the project with a minimum price guarantee contract is always greater than the value of the project with sliding premium  contract, assuming both projects have the same value of the price floor.

We need to distinguish the following cases, depending on the relation between the price $P$, the minimum price $F$ and the price cap $C$, as the functions take different forms in each case.
\begin{itemize}
\item For $P<F$: in order to prove the result, we just need to prove that $L_1 \geqslant E_1 $. This holds as re-writing $E_1$ yields:
\begin{equation}
\label{eq:E1P}
E_1 = \dfrac{(F^{1-\beta_1}-C^{1-\beta_1})Q}{\beta_1-\beta_2} \left(\dfrac{\beta_2} {r} - \dfrac{\beta_2 - 1} {r - \mu}\right)=- \dfrac{C^{1-\beta_1}Q}{\beta_1-\beta_2} \left(\dfrac{\beta_2} {r} - \dfrac{\beta_2 - 1} {r - \mu}\right) + L_1 
\end{equation}

As $\mu < r$, $\beta_1 > 1$ and $\beta_2 < 0$, it is easy to see that $L_1$ is greater than $E_1$, because $\beta_2 \mu - r < 0$. Therefore, when $P < F$, the value of the project with a minimum price guarantee contract is greater than the value of the project with a sliding premium  contract.
\item For $F \leqslant P<C$: we prove that  $M_2 P^{\beta_2} \geqslant G_1 P^{\beta_1} + G_2 P^{\beta_2} $.

Re-writing $G_1$ yields:
\begin{equation}
\label{eq:L1P}
G_1 = -\dfrac{C^{1 - \beta_1} Q } {\beta_1 - \beta_2} \left(\dfrac{\beta_2} {r}-\dfrac{\beta_2 - 1} {r - \mu}\right) < 0
\end{equation}
as $\mu < r$, $\beta_1 > 1$ and $\beta_2 < 0$. Therefore the result follows, as $G_2=M_2$.

\item For $P \geqslant C$: it holds that 
$M_2 P^{\beta_2} + \dfrac{P Q}{r - \mu} \geqslant H_2 P^{\beta_2} + \dfrac{C Q}{r}$, as
re-writing  $H_2$ yields:
\begin{equation}
\label{eq:M2P}
H_2 = \dfrac{(F ^{1 -\beta_2} - C^{1-\beta_2})Q}{\beta_1-\beta_2} \left(\dfrac{\beta_1} {r} - \dfrac{\beta_1 - 1} {r - \mu}\right) = -\dfrac{C^{1-\beta_2}Q}{\beta_1-\beta_2} \left(\dfrac{\beta_1} {r}-\dfrac{\beta_1 - 1} {r - \mu}\right) + M_2
\end{equation}

It is easy to see that $\beta_1 \mu - r \leqslant 0$, because of the quadratic equation $\dfrac{1}{2} \sigma^2 \beta (\beta - 1) + \beta \mu - r = 0$. As $\beta_1 \geqslant 1$, the term $\beta_1 \mu - r$ is negative. Hence, $H_2 \leqslant M_2$. In addition, as $P \geqslant C$ because this is the third branch, it is straight forward to see that $\dfrac{P Q}{r - \mu} \geqslant \dfrac{C Q}{r}$. 
\end{itemize}

It is straight forward to see that the value of the project with a finite minimum price guarantee contract (i.e.: Equation \eqref{eq:VALM}) is also greater than the value of the project with a finite sliding premium  contract (i.e.: Equation \eqref{eq:VALC}). In other words, $V_{MP}(P) - S_M(P) + \dfrac{P Q}{r-\mu}e^{-(r-\mu)T} \geqslant V_{CP}(P) - S_C(P) + \dfrac{P Q}{r-\mu}e^{-(r-\mu)T}$, which implies, in its turn, that $S_{M} (P) \geqslant S_{C} (P)$ which, in each case, represents the value of the project with a delayed perpetual feed-in tariff contract.  However, the difference between $V_{M}$ and $V_{C}$ is greater than the difference between $S_{M}$ and $S_{C}$, because the former also has revenue before time $T$ and the latter only starts to have cash flow after $T$. In conclusion, the value of the project with a finite minimum price guarantee contract is always greater than the value of the project with a finite sliding premium  contract. 

\section{Proofs for the comparative statistics}
\label{app:proof_comparative_statics}

In this appendix we present the proof of Proposition \ref{prop:comparative_statics} regarding the comparative statics on the effect of $\lambda$ and $\omega$ grouped by FIT scheme. The proofs are presented briefly, as they follow from long calculations, starting with the simplest cases.

\subsection{Fixed-premium FIT}

The partial derivatives of the investment threshold is found deriving Equation \eqref{eq:trigger_premium}. Let us denote it by $E_P$.

The partial derivative of $P^\ast_{PR}$ to a parameter $z$, given that $E_P$ and $P^\ast_{PR}$ are both a function of $z$, is:
\begin{equation}
	\dfrac{\partial P^\ast_{PR}}{\partial z}=-\dfrac{\dfrac{\partial E_P}{\partial z}}{\dfrac{\partial E_P}{\partial P^\ast_{PR}}}
\end{equation}

\begin{equation}
	\dfrac{\partial E_P}{\partial P^\ast_{PR}} = \left(-\beta_1 ( \eta_1-\beta_1)V_1^{(\omega)}{P^\ast_{PR}}^{\beta_1} + (\eta_1-1)\dfrac{P_{PR}^{\ast} Q}{r - \mu} \right) \dfrac{1}{P^\ast_{PR}}
\end{equation}

From the smooth-pasting condition used to obtain Equation \eqref{eq:trigger_premium} we know that
\begin{equation}
	\beta_1 V_1^{(\omega)}{P^\ast_{PR}}^{\beta_1} = \dfrac{P_{PR}^{\ast}Q}{r - \mu}-\eta_1 W_1{P^\ast_{PR}}^{\eta_1},
\end{equation}
which allows us to obtain:
\begin{equation}
	\dfrac{\partial E_P}{\partial P^\ast_{PR}} =\left( (\eta_1 - \beta_1) \eta_1 W_1{P^\ast_{PR}}^{\eta_1} +(\beta_1-1) \dfrac{P_{PR}^{\ast} Q}{r - \mu} \right) \dfrac{1}{P^\ast_{PR}} >0.
\end{equation}

Since $W_1>0$, $\eta_1>\beta_1>1$, this partial derivative is always positive. Therefore, the sign of the partial derivative of the threshold to $z$ is the opposite of the sign of $\frac{\partial E_P}{\partial z}$. It can be shown that these partial derivatives are:
\begin{align}
\dfrac{\partial E_P}{\partial \lambda} &= -W_1 {P^\ast_{PR}}^{\beta_1} \dfrac{\partial \eta_1}{\partial \lambda} >0 \\
\dfrac{\partial E_P}{\partial \omega} &= -(\eta_1 - \beta_1)\dfrac{\partial V_1^{(\omega)}}{\partial \omega} {P^\ast_{PR}}^{\beta_1} \nonumber \\
&= -(\eta_1 - \beta_1) \dfrac{F Q}{r}\left(1-e^{-rT}\right) \left(\dfrac{P^\ast_{PR}}{{P^\ast_{P}}^{(\omega)}} \right)^{\beta_1}< 0 
\end{align}

The signs are obtained knowing that $\frac{\partial \eta_1}{\partial \lambda}<0$, $\eta_1>\beta_1>1$, $P^\ast_{PR} \leqslant {P^\ast_{P}}^{(\omega)}$.

\subsection{Fixed-price FIT}

Similarly to the previous section, the partial derivatives of the investment threshold is found deriving Equation \eqref{eq:trigger_fixed}. Denoting it by $E_F$ and showing that
\begin{equation}
	\dfrac{\partial E_P}{\partial P^\ast_{PR}} =\left( (\eta_1 - \beta_1) \eta_1 U_1{P^\ast_{PR}}^{\eta_1} +(\beta_1-1) \dfrac{Q}{r - \mu}P_{PR}^{\ast} \right) \dfrac{1}{P^\ast_{PR}} >0,
\end{equation}
the sign of the partial derivative of the threshold to $z$ is the opposite of the sign of $\frac{\partial E_P}{\partial z}$. It can be shown that these partial derivatives are:
\begin{align}
\dfrac{\partial E_F}{\partial \lambda} &= -U_1 {P^\ast_{FR}}^{\beta_1} \dfrac{\partial \eta_1}{\partial \lambda} >0 \\
\dfrac{\partial E_F}{\partial \omega} &= -(\eta_1 - \beta_1) \dfrac{\partial S_1^{(\omega)}}{\partial \omega} {P^\ast_{FR}}^{\beta_1} \nonumber \\
&= -(\eta_1 - \beta_1) \dfrac{F Q}{r}\left(1-e^{-rT}\right) \left(\dfrac{P^\ast_{FR}}{{P^\ast_{F}}^{(\omega)}} \right)^{\beta_1}< 0
\end{align}

The signs are obtained knowing additionally that $U_1>0$, $\frac{\partial \eta_1}{\partial \lambda}<0$, $\eta_1>\beta_1>1$, $P^\ast_{FR} \leqslant {P^\ast_{F}}^{(\omega)}$.

\subsection{Minimum price guarantee}

Similarly to the previous sections, the partial derivatives of the investment threshold is found deriving the two equations in \eqref{eq:trigger_floor_app}. Denoting them by $E_M$ and showing that
\begin{equation}
\dfrac{\partial E_M}{\partial P^\ast_{PR}} =\dfrac{1}{P_{MR}^\ast}
\begin{cases}
\eta_1 (\eta_1 - \beta_1) R_1 {P_{MR}^\ast}^{\eta_1} -\beta_2 (\beta_1-\beta_2)M_2{P_{MR}^\ast}^{\beta_2}N(d_{\beta_2}) \\
\qquad + (\beta_1-1) \dfrac{P_{MR}^{\ast} Q}{r-\mu}e^{-(r-\mu)T}(1-N(d_{1})) >0 & \textrm{ for } P_{MR}^\ast < F \\
\\
\eta_1 (\eta_1 - \beta_1) R_1 {P_{MR}^\ast}^{\eta_1} \\
\qquad -\beta_2 ((\eta_1-\beta_1)+(\eta_1-\beta_2))M_2{P_{MR}^\ast}^{\beta_2}(N(d_{\beta_2}) -1)\\
\qquad + (\beta_1-1) \dfrac{P_{MR}^{\ast} Q}{r-\mu} (1+e^{-(r-\mu)T}(1-N(d_{1}))) >0 & \textrm{ for } P_{MR}^\ast \geqslant F \\
\end{cases}
\end{equation}
the sign of the partial derivative of the threshold to $z$ is the opposite of the sign of $\frac{\partial E_M}{\partial z}$. It can be shown that these partial derivatives are:
\begin{align}
\dfrac{\partial E_M}{\partial \lambda} &= -R_1 {P^\ast_{MR}}^{\beta_1} \dfrac{\partial \eta_1}{\partial \lambda} >0 \\	
\dfrac{\partial E_M}{\partial \omega} &= -(\eta_1 - \beta_1)\dfrac{\partial N_1^{(\omega)}}{\partial \omega} {P^\ast_{MR}}^{\beta_1} \nonumber \\
&= -(\eta_1 - \beta_1) \dfrac{1}{\omega} \left( I-(\beta_1-1) N_1^{(\omega)}{{P^\ast_{M}}^{(\omega)}}^{\beta_1} \right) \left(\dfrac{{P^\ast_{MR}}}{{P^\ast_{M}}^{(\omega)}}\right)^{\beta_1}<0
\end{align}

The signs are obtained knowing that $\frac{\partial \eta_1}{\partial \lambda}<0$, $\eta_1>\beta_1>1$, $P^\ast_{MR} \leqslant {P^\ast_{M}}^{(\omega)} \leqslant P^\ast_{M} < P^\ast_{W}$, $ 0<R_1 \leqslant N_1^{(\omega)} \leqslant H_1 < {A_1}_W$, and
\begin{align*}
	&(\beta_1-1) N_1^{(\omega)}{{P^\ast_{M}}^{(\omega)}}^{\beta_1}<(\beta_1-1) H_1 {{P^\ast_{M}}}^{\beta_1} < (\beta_1-1) {A_1}_W {{P^\ast_{M}}}^{\beta_1} = (\beta_1-1) \dfrac{I}{\beta_1-1} \left(\dfrac{{P^\ast_{M}}}{P^\ast_{W}}\right)^{\beta_1} < I.
\end{align*}

\subsection{Sliding premium with cap and floor}

The partial derivatives of the investment threshold is found deriving the three Equations that result from \eqref{eq:trigger_collar} and that are presented in Equation \eqref{eq:trigger_collar_app} in Appendix \ref{app:collar}. Let us denote these equations as $E_C$.

As for the previous cases, it can be shown that $\frac{\partial E_C}{\partial P^\ast_{CR}} > 0$. Therefore, the sign of the partial derivative of the threshold to $z$ is the opposite of the sign of $\frac{\partial E_C}{\partial z}$. It can be shown that these partial derivatives are:
\begin{align}
\dfrac{\partial E_C}{\partial \lambda} &= - K_1 {P^\ast_{MR}}^{\beta_1}  \dfrac{\partial \eta_1}{\partial \lambda} >0 \\	
\dfrac{\partial E_C}{\partial \omega} &= -(\eta_1 - \beta_1)\dfrac{\partial J_1^{(\omega)}}{\partial \omega} {P^\ast_{MR}}^{\beta_1} \nonumber \\
&= -(\eta_1 - \beta_1) \dfrac{1}{\omega} \left( I-(\beta_1-1) J_1^{(\omega)}{{P^\ast_{C}}^{(\omega)}}^{\beta_1} + f\right) \left(\dfrac{{P^\ast_{CR}}}{{P^\ast_{C}}^{(\omega)}}\right)^{\beta_1}<0
\end{align}
where
\begin{equation}
	f = 
	\begin{cases}
		(\beta_1 - 1)  G_1^{(\omega)}  \Phi(d_{\beta_1}({P_C^{\ast}}^{(\omega)},F))  \\
		\qquad- (\beta_2 - 1) \left( H_2^{(\omega)}-G_2^{(\omega)} \right) \Phi(d_{\beta_2}({P_C^{\ast}}^{(\omega)},C)) \\
		\qquad + \dfrac{\omega_c C Q}{r} e^{-rT}  \Phi(d_0(P_C^{\ast},C)) & \textrm{ for } {P_C^{\ast}}^{(\omega)},F) < C \\
		\\
		-(\beta_1 - 1)  G_1^{(\omega)}  \left(1 - \Phi(d_{\beta_1}({P_C^{\ast}}^{(\omega)},F)) \right)  \\ 
		\qquad + (\beta_2 - 1) \left( H_2^{(\omega)}-G_2^{(\omega)} \right) \left(1-\Phi(d_{\beta_2}({P_C^{\ast}}^{(\omega)},C)) \right) \\
		\qquad - \dfrac{\omega_c C Q}{r} \left(1-e^{-rT}  \Phi(d_0(P_C^{\ast},C)) \right) & \textrm{ for } {P_C^{\ast}}^{(\omega)},F) \geqslant C
		
	\end{cases}
\end{equation}

The signs are obtained knowing that $\frac{\partial \eta_1}{\partial \lambda}<0$, $\eta_1>\beta_1>1$, $P^\ast_{CR} \leqslant {P^\ast_{C}}^{(\omega)}$, and
\begin{align*}
	 I-(\beta_1-1) J_1^{(\omega)}{{P^\ast_{C}}^{(\omega)}}^{\beta_1} + f > 0.
\end{align*}

The sign of this equation in proved by deriving the value of option to invest in a sliding premium FIT without the floor (a cap FIT), denoted by $Z_1^{(\omega)} P^\beta_1$ (for instance taking $C$ to the limit of $0$). It can be shown that $f> (\beta_1-1) Z_1^{(\omega)}{{P^\ast_{C}}^{(\omega)}}^{\beta_1} - I$ and $Z_1^{(\omega)} > J_1^{(\omega)}$.

\end{appendix}

\newpage

\bibliographystyle{jmr}
\bibliography{references}

\end{document}